\DeclareMathAlphabet{\mathcal}{OMS}{cmsy}{m}{n}
\newtheorem*{theorem*}{Teorema}
\newtheorem{theorem}{Theorem}
\newtheorem{corollary}[theorem]{Corollary}
\newtheorem{definition}{Definition}
\newtheorem{example}[theorem]{Example}
\newtheorem{lemma}[theorem]{Lemma}
\newtheorem{proposition}{Proposition}
\newtheorem{remark}[theorem]{Remark}
\title{On Laguerre Isotropic Hypersurfaces}
\author{
  Fernanda Alves Caixeta\textsuperscript{1}
  \and 
  Keti Tenenblat\textsuperscript{2}
}
\date{}
\begin{document}

\maketitle 

\footnotetext[1]{Instituto Acadêmico de Educação e Licenciaturas - Universidade Estadual de Goiás (UEG); Partially suppoted by CAPES/Brazil-Finance code 001; e-mail: fernanda.caixeta@ueg.br}
\footnotetext[2]{Departamento de Matemática, Universidade de Brasília (UnB); Partially suppoted by CNPq Grant 311916/2021-0; CAPES/Brazil-Finance code 001 and FAPDF 00193-00001678/2024-39; e-mail: k.tenenblat@mat.unb.br}


\begin{abstract}
\hspace{0.5cm} We study 
Laguerre isotropic hypersurfaces in the Euclidean space, which are 
hypersurfaces whose  Laguerre form  is zero and the eigenvalues of 
the Laguerre tensor are constant and equal to 
$\lambda\geq 0$.
We prove  a rigidity theorem for  the 
L-isotropic hypersurfaces parametrized by lines of curvature. 
Moreover, we study the hypersurfaces that are 
L-isotropic  and  L-isoparametric simultaneously and 
we show  that for such a hypersurface  $\lambda=0$.
We obtain necessary 
conditions for the existence of L-isotropic 
hypersurfaces with $\lambda > 0$ and we prove that a certain function, determined by 
the radii of curvature of the hypersurface, is bounded  above 
by ${1}/{2\lambda}$.

\textbf{Keywords:} Laguerre geometry, Laguerre isotropic hypersurfaces, 
Laguerre isoparametric hypersurfaces.

Mathematics Subject Classification: [2020] 53A40, 53B25.
\end{abstract}

\section{Introduction}

\hspace{0.5cm}The theory of Laguerre geometry began in the 20th century with the study of Laguerre surfaces in $\mathbb{R}^3$, as presented in Blaschke’s book \cite{Blaschke}. Since then, several mathematicians have dedicated themselves to the study of Laguerre differential geometry, including Musso and Nicolodi in \cite{Musso}, \cite{Musso2}, and Palmer in \cite{Palmer}. The study of Laguerre geometry of surfaces in $\mathbb{R}^{3}$ continued  with the work of Li \cite{Li superficies}. Later, Song and Wang \cite{superficies minimas} studied minimal Laguerre surfaces in $\mathbb{R}^{3}$. The development of Laguerre geometry for surfaces naturally motivated a generalization to the case of hypersurfaces in $\mathbb{R}^{n+1}$. Subsequently, several papers were published addressing this more general case.

In Laguerre differential geometry, one studies the properties of Laguerre hypersurfaces that are invariant under the group of Laguerre transformations on the unit tangent bundle $U\mathbb{R}^{n+1}$. Li and Wang \cite{Li pequeno} studied this geometry for hypersurfaces in $\mathbb{R}^{n+1}$ using Cartan’s method of moving frames. Considering an immersion $x: M^n \rightarrow \mathbb{R}^{n+1}$ without umbilic points, with distinct nonzero principal curvatures, and $\xi: M^n \rightarrow \mathbb{S}^{n}$ a unit vector field normal to $x$, they defined the basic Laguerre invariants, namely: a Laguerre-invariant metric $g$, the Laguerre second fundamental form $\mathbb{B}$, the Laguerre form $\mathbb{C}$ and the Laguerre tensor $\mathbb{L}$.

Once these invariants were defined it was natural to undertake the task of classifying hypersurfaces, in the sense of Laguerre geometry, according to the properties satisfied by these invariants. The first results in this direction were obtained by Li, Li, and Wang \cite{Li paralela} where the authors introduced the concept of a hypersurface with parallel Laguerre second fundamental form $\mathbb{B}$, i.e., $\nabla \mathbb{B} = 0$, where $\nabla$ is the covariant derivative with respect to the Laguerre metric $g$. Moreover, they obtained a complete classification of such hypersurfaces, up to Laguerre transformations.

Subsequently, Li, Li, and Wang \cite{isotropicas} began to study a class of hypersurfaces in $\mathbb{R}^{n+1}$ with vanishing Laguerre form $\mathbb{C}$ and with all eigenvalues of the Laguerre tensor $\mathbb{L}$ constant equal to $\lambda$. We refer to hypersurfaces with this property as L-isotropic hypersurfaces. Li, Li, and Wang proved that $\lambda \geq 0$ and presented a characterization result for L-isotropic hypersurfaces. Moreover, they obtained an explicit classification of hypersurfaces with $\mathbb{C} = 0$ and constant eigenvalues of the tensor $\mathbb{L}$ that are not all equal.

Li and Sun \cite{Li e Sun em R4} defined another class of hypersurfaces, the L-isoparametric hypersurfaces, that are those for which the eigenvalues of the Laguerre second fundamental form $\mathbb{B}$ are constant and the Laguerre form $\mathbb{C}$ vanishes. They presented a classification of these hypersurfaces in the case of three-dimensional manifolds in $\mathbb{R}^{4}$. They also showed that an L-isoparametric hypersurface is a Dupin hypersurface. Conversely, a Dupin hypersurface, under the additional condition that the Laguerre form $\mathbb{C}$ vanishes, is an L-isoparametric hypersurface.


Song \cite{isop 2 distintas}, and Cezana and Tenenblat \cite{Miguel}, obtained the classification of L-isoparametric hypersurfaces in $\mathbb{R}^{n+1}$ with two distinct, nonzero principal curvatures.
Subsequently, Li and Shu \cite{3distintas e uma simples} obtained the classification of L-isoparametric hypersurfaces with three distinct principal curvatures, one of them being simple.

Shu published two other papers obtaining the classification of L-isoparametric hypersurfaces in $\mathbb{R}^5$ \cite{isop em R5} and in $\mathbb{R}^6$ \cite{isop em R6}, and he also proved that such hypersurfaces have parallel second fundamental, when they are not L-isotropic. More recently, Shu \cite{isop em Rn} studied L-isoparametric hypersurfaces and Dupin hypersurfaces in $\mathbb{R}^{n+1}$, presenting several classification theorems. In one of his results, considering an L-isoparametric hypersurface in $\mathbb{R}^{n+1}$, he proved that the eigenvalues of the Laguerre tensor $\mathbb{L}$ are all constant, possibly all equal or not.
When the eigenvalues of the Laguerre tensor $\mathbb{L}$ are all constant but not all equal, the classification was obtained by Li, Li, and Wang in \cite{isotropicas}.
Note that for the complete classification of L-isoparametric hypersurfaces, it is sufficient to determine which are the L-isotropic hypersurfaces.

Moreover, in \cite{isop em Rn}, using the classification theorem for L-isoparametric hypersurfaces, Shu presented the classification of proper Dupin hypersurfaces in $\mathbb{R}^{n+1}$ with the number of distinct principal curvatures $g \geq 2$. This result, in a certain sense, generalizes the theorem obtained by Cezana and Tenenblat \cite{Miguel dupin}, where they proved that proper Dupin hypersurfaces $M^n \subset \mathbb{R}^{n+1}$ with $n$ distinct, nonvanishing principal curvatures and constant Laguerre curvature, admitting a coordinate system by lines of curvature, are determined by $n$ constants. Moreover, they proved that any Dupin hypersurface under these conditions in $\mathbb{R}^{n+1}$ is described by
\begin{equation}\label{familia da dupin introdução}
x(u_1,\ldots,u_n) = (u_1,\ldots,u_n, 0) -
\dfrac{\sum_{i=1}^na_iu_i^2 + \phi}{\sum_{i=1}^na_i^2u_i^2 + 1}(a_1u_1,\ldots,a_nu_n,-1),
\end{equation}
where $\phi \in \mathbb{R}$ and the $a_i$ are distinct, nonzero constants.

This family of Dupin hypersurfaces was first obtained by Corro, Ferreira, and Tenenblat \cite{artigo do exemplo} in 1999 via a Ribaucour transformation of a hyperplane. Although this example was derived in a different context, it was applied to the Laguerre geometry setting, when $\phi = 0$, where it first appeared in \cite{Li paralela} as an example of an L-isoparametric hypersurface.

Since then, the family given by \eqref{familia da dupin introdução} has appeared in all classification results of L-isoparametric hypersurfaces. Furthermore, this family also provides an example of an L-isotropic hypersurface with all eigenvalues $\lambda$ of the Laguerre tensor $\mathbb{L}$ constant and equal to zero.

In this work, motivated by the family \eqref{familia da dupin introdução} and Shu’s article \cite{isop em Rn}, we investigate hypersurfaces that simultaneously possess both properties of being L-isotropic and L-isoparametric. More precisely, we obtain the following proposition:
\begin{proposition}\label{propo introducao....}
Let $x:M^n \rightarrow \mathbb{R}^{n+1}$, with $n \geq 3$, be a hypersurface without umbilic points, having distinct, nonvanishing principal curvatures. If $x$ is both L-isotropic and L-isoparametric, then all eigenvalues of the Laguerre tensor $\mathbb{L}$ are equal and zero, i.e., $\lambda = 0$.
\end{proposition}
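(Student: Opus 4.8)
The plan is to convert the two hypotheses into pointwise data in a frame of Laguerre principal directions and then read off an obstruction from the Laguerre Gauss equation together with the Codazzi equation for $\mathbb{B}$. First I would fix a local orthonormal frame $\{e_i\}$ for the Laguerre metric $g$ adapted to the lines of curvature, in which all the Laguerre invariants diagonalize simultaneously. L-isotropy gives $\mathbb{C}=0$ and $L_{ij}=\lambda\delta_{ij}$, while L-isoparametricity gives $\mathbb{C}=0$ again and $B_{ij}=B_i\delta_{ij}$ with the $B_i$ mutually distinct constants subject to $\sum_i B_i=0$ and $\sum_i B_i^2=\frac{n-1}{n}$. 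Substituting $L_{ij}=\lambda\delta_{ij}$ into the Laguerre Gauss equation, whose right-hand side is built from $\mathbb{L}$ alone, yields $R_{ijkl}=2\lambda(\delta_{ik}\delta_{jl}-\delta_{il}\delta_{jk})$; that is, $g$ has constant sectional curvature $2\lambda\ge 0$, so $R_{ijij}=2\lambda$ for every pair $i\neq j$ while all mixed curvature components vanish.

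The second ingredient exploits that, since $\mathbb{C}=0$, the Codazzi equation reduces to $B_{ij,k}=B_{ik,j}$, so $\mathbb{B}$ is a Codazzi tensor. Writing $\gamma_{ijk}=\langle\nabla_{e_k}e_i,e_j\rangle$ and using that the $B_i$ are constant, one finds $B_{ij,k}=\gamma_{ijk}(B_i-B_j)$. The Codazzi identity then forces $\gamma_{iji}=0$, so that $\nabla_{e_i}e_i=0$ and each $e_i$-curve is a geodesic of $g$, and it makes $T_{ijk}:=B_{ij,k}$ totally symmetric; concretely $\gamma_{ijk}(B_i-B_j)=\gamma_{ikj}(B_i-B_k)=:S_{ijk}$ on each triple of distinct indices.

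Next I would feed this into the Ricci identity for the Codazzi tensor, $B_{ij,kl}-B_{ij,lk}=(B_i-B_j)R_{ijkl}$. Taking $k=i,\ l=j$ and expanding the left-hand side through the connection coefficients, using $\nabla_{e_i}e_i=0$ and the symmetry $S_{ijm}$ to eliminate the $\gamma$'s, one obtains the pointwise identity $R_{ijij}=\sum_{m\neq i,j}\frac{2S_{ijm}^2}{(B_i-B_m)(B_j-B_m)}$. Comparing with $R_{ijij}=2\lambda$ gives $\lambda=\sum_{m\neq i,j}\frac{S_{ijm}^2}{(B_i-B_m)(B_j-B_m)}$ for all $i\neq j$. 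Summing this over all pairs $i<j$ and reorganizing the double sum by unordered triples $\{a,b,c\}$, the three contributions of a fixed triple share the factor $S_{abc}^2$ and combine into $S_{abc}^2$ times $\frac{1}{(B_a-B_c)(B_b-B_c)}+\frac{1}{(B_a-B_b)(B_c-B_b)}+\frac{1}{(B_b-B_a)(B_c-B_a)}$, which is identically $0$ by partial fractions. This is exactly where $n\ge 3$ and the distinctness of the $B_i$ are used, and it yields $\binom{n}{2}\lambda=0$, hence $\lambda=0$.

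I expect the main obstacle to be the bookkeeping in the Ricci-identity step: correctly computing the second covariant derivatives $B_{ij,kl}$ in the moving frame and invoking the exact form of the Laguerre structure equations, most crucially the fact that the Gauss equation is expressed purely in terms of $\mathbb{L}$, so that isotropy genuinely forces constant sectional curvature proportional to $\lambda$. Securing that input is what makes the final partial-fraction cancellation deliver $\lambda=0$ rather than some nonzero constant; if instead a $\mathbb{B}\wedge\mathbb{B}$ term were present in the Gauss equation, the same summation would produce a fixed positive value of $\lambda$, so the precise structure of the Laguerre invariants is essential.
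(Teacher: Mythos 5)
Your argument is correct, but it reaches the conclusion by a genuinely different route than the paper. The paper's proof is short: it combines the constant-curvature consequence $R_{ijij}=2\lambda$ of isotropy with a lemma quoted from Li--Qing--Wang (Lemma \ref{LEMA IGC}), namely $\sum_{j\notin[i]}R_{ijij}/(b_i-b_j)=0$, and then concludes by reordering the frame so that some $b_{i_0}$ is the largest Laguerre principal curvature, which makes every summand strictly positive unless $\lambda=0$. You instead re-derive the substance of that lemma from scratch: using $\mathbb{C}=0$ and the constancy of the $B_i$ you show the cubic form $B_{ij,k}$ is totally symmetric, kill the repeated-index connection coefficients, and run the Ricci identity for the Codazzi tensor $\mathbb{B}$ to obtain the Cartan-type pointwise identity $R_{ijij}=2\sum_{m\neq i,j}S_{ijm}^2/\bigl((B_i-B_m)(B_j-B_m)\bigr)$; equating this with $2\lambda$ and summing over all pairs, the three-term partial-fraction identity over each unordered triple gives $\binom{n}{2}\lambda=0$. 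I checked the pointwise identity and the final cancellation; both are correct, and your identity does imply the quoted lemma (summing it against $1/(b_i-b_j)$ annihilates each triple), so the two proofs are consistent. What each buys: the paper's version is shorter and leans on the literature, with a positivity (max-eigenvalue) argument at the end; yours is self-contained, yields the stronger pointwise formula for $R_{ijij}$, and replaces the ordering argument by a purely algebraic cancellation that does not even need $\lambda\geq 0$. Both require $n\geq 3$ and the distinctness of the $b_i$ in the same essential way. One harmless slip: with the paper's normalization \eqref{identidades que os B satisfazem} one has $\sum_i B_i^2=1$ rather than $(n-1)/n$, but you never use this constant.
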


We posed another natural question whether the family of hypersurfaces described in \eqref{familia da dupin introdução} is the only L-isotropic one when $\lambda = 0$. We partially answer this question positively by presenting a rigidity result for L-isotropic hypersurfaces in $\mathbb{R}^{n+1}$ parametrized by lines of curvature, up to Laguerre transformation.

Given a hypersurface $x:M^n \rightarrow \mathbb{R}^{n+1}$ with unit normal vector $\xi:M^n \rightarrow \mathbb{S}^n$, the Laguerre position vector $Y:M^n \rightarrow \mathbb{C}^{n+3} \subset \mathbb{R}^{n+4}_2$ is defined by
$Y=\rho (x\cdot\xi,-x\cdot\xi,\xi, 1)$,
where
\begin{equation}\label{oioiiiii....}
\rho = \sqrt{\sum_{i} (r - r_i)^2}
\end{equation}
is a function defined in terms of the curvature radii $r_i = \frac{1}{k_i}$ and the mean curvature radius $r = \frac{\sum_i r_i}{n}$ of $x$. We then prove our main result:
\begin{theorem}\label{Teorema rigidez introduçao}
Let $x:M^n \rightarrow \mathbb{R}^{n+1}$ be a hypersurface with $n$ distinct, nonvanishing principal curvatures, let $Y:M^n \rightarrow \mathbb{C}^{n+3} \subset \mathbb{R}^{n+4}_2$ be its Laguerre position vector with Laguerre metric $g = \langle dY, dY \rangle$, and let $\lambda$ be the eigenvalues of the tensor $\mathbb{L}$. Suppose $M^n$ is connected and admits a parametrization by lines of curvature with respect to the metric $g$. If $x$ is an L-isotropic hypersurface, then $\lambda = 0$ and $x$ is L-isoparametric. Moreover, up to Laguerre transformation, this hypersurface is equivalent to the one described in \eqref{familia da dupin introdução} for $\phi = 0$.
\end{theorem}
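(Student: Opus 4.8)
The plan is to work in the fixed lines-of-curvature parametrization $(u_1,\dots,u_n)$ provided by the hypothesis, in which the Laguerre metric $g=\sum_i g_{ii}\,du_i^2$ is diagonal and the Laguerre second fundamental form $\mathbb{B}$ is simultaneously diagonalized, with eigenvalues $B_1,\dots,B_n$ relative to $g$ obeying the usual normalizations (trace-free, $\sum_i B_i=0$). Since $x$ is L-isotropic, the Laguerre form vanishes, $C_i=0$ for all $i$, and the Laguerre tensor satisfies $L_{ij}=\lambda\,\delta_{ij}$ in the associated orthonormal coframe, with $\lambda\ge 0$ a constant; in particular $\nabla\mathbb{L}=0$. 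First I would write out the Laguerre structure equations --- the Codazzi equations for $\mathbb{B}$ and for $\mathbb{L}$, and the Gauss equation relating the curvature of $g$ to $\mathbb{B}$ and $\mathbb{L}$ --- in this frame, with connection one-forms $\omega_{ij}$ determined by $g$.

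The crux is to show that $x$ is L-isoparametric, i.e. that each $B_i$ is constant. With $\mathbb{C}=0$ the Codazzi equation for $\mathbb{B}$ collapses to the total symmetry $B_{ij,k}=B_{ik,j}$; in the diagonal frame, choosing indices with $j=k\ne i$ expresses each tangential derivative $\partial_j B_i$ as the product of the difference $B_i-B_j$ --- nonzero because the principal curvatures are distinct --- with an off-diagonal connection coefficient. It therefore suffices to force those connection coefficients to vanish, and this is exactly where the full strength of isotropy is used: substituting $L_{ij}=\lambda\delta_{ij}$ with $\lambda$ constant into the Laguerre Gauss equation and combining the off-diagonal ($i\ne j$) components with the Codazzi relation above yields, after dividing by the nonzero differences $B_i-B_j$, that the relevant connection coefficients vanish. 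Hence $\partial_j B_i=0$ for all $i,j$, each $B_i$ is locally constant, and connectedness of $M^n$ makes them globally constant. This establishes L-isoparametricity.

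Once L-isoparametricity is in hand, Proposition~\ref{propo introducao....} applies directly and gives $\lambda=0$. It then remains to produce the explicit hypersurface. With $\lambda=0$, $\mathbb{C}=0$ and the $B_i$ constant, the moving-frame (Gauss--Weingarten type) equations governing the Laguerre position vector $Y:M^n\to\mathbb{C}^{n+3}$ become a completely integrable linear system with constant coefficients; I would integrate it explicitly, recover $\xi$ and $x$ from $Y=\rho(x\cdot\xi,-x\cdot\xi,\xi,1)$, and verify that the resulting hypersurface has exactly the Laguerre invariants of the member of the family \eqref{familia da dupin introdução} with $\phi=0$ whose constants $a_i$ are read off from the $B_i$. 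By the fundamental (rigidity) theorem of Laguerre geometry --- two hypersurfaces with the same invariants $(g,\mathbb{B},\mathbb{C},\mathbb{L})$ agree up to a Laguerre transformation --- this identifies $x$, up to Laguerre transformation, with \eqref{familia da dupin introdução} for $\phi=0$.

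The main obstacle I anticipate is the algebraic core of the second step: disentangling the coupled Codazzi and Gauss equations to prove that the isotropy condition $L_{ij}=\lambda\delta_{ij}$, together with $\mathbb{C}=0$, forces every $\partial_j B_i$ to vanish. This demands careful bookkeeping of the structure equations, systematic use of the distinctness of the principal curvatures to divide by $B_i-B_j$, and a check that the trace-free and normalization constraints on $\mathbb{B}$ leave no degenerate configuration of the eigenvalues uncovered. By contrast, the concluding integration is routine once $\lambda=0$ and the $B_i$ are constant, because the frame equations then have constant coefficients.
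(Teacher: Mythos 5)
Your overall architecture (pass to the lines-of-curvature coordinate system, show the Laguerre principal curvatures are constant, deduce $\lambda=0$, then integrate the frame equations and identify the resulting hypersurface) matches the paper's, and your concluding integration step is fine in outline. The gap is in the crux of your second paragraph: you claim that substituting $L_{ij}=\lambda\delta_{ij}$ into the Gauss equation and combining it with the Codazzi relation $B_{ij,k}=B_{ik,j}$ forces the off-diagonal connection coefficients to vanish. It does not. The Gauss equation \eqref{Rijkl da isotropica correcaoo.} only says that $g$ has constant sectional curvature, and a constant-curvature metric has no reason to have vanishing connection coefficients in a $\mathbb{B}$-eigenframe; likewise the Codazzi identity in the diagonal frame merely converts $E_j(b_i)$ into $(b_i-b_j)\,\omega_{ij}(E_i)$ without killing either factor. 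Worse, if Codazzi plus Gauss in an arbitrary orthonormal eigenframe sufficed, you would have shown that \emph{every} L-isotropic hypersurface is L-isoparametric, hence by \eqref{bij,k = 2nlambda...} that $\lambda=0$ always, with no use of the lines-of-curvature hypothesis --- which would trivially settle the open $\lambda>0$ case and is inconsistent with the role that hypothesis plays in the theorem. Your argument never engages the holonomicity of the frame at the decisive moment.

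What is actually needed (and what the paper does in Lemma \ref{lema da aplicacao do FroFro}) is the cross-derivative compatibility $(Y_{,ii})_{,j}=(Y_{,ij})_{,i}$ of the coordinate system. Its $Y_{,j}$-component gives $2\lambda=\sum_{l\neq i,j}\Gamma^{l}_{ij}\Gamma^{j}_{li}$ as in \eqref{condicao2}; differentiating the orthogonality $\langle Y_{,i},Y_{,j}\rangle=0$ of the coordinate fields gives the antisymmetry $\Gamma^{s}_{ji}=-\Gamma^{j}_{si}$ of \eqref{condicao importante dos simbolos - outro}, so that $2\lambda=-\sum_{l}\big(\Gamma^{j}_{li}\big)^2\le 0$; combined with the sign constraint $\lambda\ge 0$ of Proposition \ref{lambra maior ou igual a zero}, this yields $\lambda=0$ and $\Gamma^{j}_{li}=0$ simultaneously. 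The constancy of the $b_i$ then comes from the $P$-component of the same compatibility condition together with $\sum_i b_i=0$ --- not from Gauss. In particular the logical order is the reverse of yours: $\lambda=0$ is obtained together with (indeed before) isoparametricity, so Proposition \ref{isop mais isoto} is never invoked in the paper's proof. The inequality $\lambda\ge 0$ is the indispensable ingredient your proposal omits; without it, no amount of bookkeeping in the Codazzi and Gauss equations will produce the vanishing you need.
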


The proof of Theorem \ref{Teorema rigidez introduçao} clarifies that choosing the constant $\phi$ is equivalent to choosing the position vector $Y$, which implies an equivalence in the sense of Laguerre. Consequently, from the perspective of Laguerre geometry, the family of hypersurfaces described by \eqref{familia da dupin introdução} is equivalent, regardless of whether $\phi = 0$ or $\phi \neq 0$.

The classification of L-isotropic hypersurfaces is of significant importance, as it serves as a foundation for classifying all L-isoparametric hypersurfaces, based on the result proven by Shu in \cite{isop em Rn}. The case of L-isotropic hypersurfaces with $\lambda > 0$ remains an open area of investigation, as no examples are currently known. In this work, we establish necessary conditions for the existence of such hypersurfaces. Theorem \ref{Teorema rigidez introduçao} demonstrates that an L-isotropic hypersurface with $\lambda > 0$ cannot admit a parameterization by lines of curvature. Furthermore, we verify that any such hypersurface must possess at least three distinct, non-vanishing principal curvatures.

For the class of L-isotropic hypersurfaces with $\lambda > 0$, we also prove the following proposition:
\begin{proposition}\label{proposição 1 introdução}
Let $x:M^n \rightarrow \mathbb{R}^{n+1}$ be a hypersurface without umbilic points and with nonvanishing principal curvatures. If $x$ is an L-isotropic hypersurface with $\lambda > 0$, then
\begin{equation}
    0 < \rho^2 < \dfrac{1}{2\lambda},
\end{equation}
where $\rho$ is defined by \eqref{oioiiiii....}.
\end{proposition}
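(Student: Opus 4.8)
The plan is to work with the Laguerre position vector $Y=\rho(x\cdot\xi,-x\cdot\xi,\xi,1)$ and the moving frame $\{Y,N,Y_1,\dots,Y_n,\eta,\zeta\}$ adapted to the Laguerre metric $g=\langle dY,dY\rangle$, where $N$ is the null transversal fixed by $\langle N,Y\rangle=1$, $\langle N,N\rangle=0$, $\langle N,Y_i\rangle=0$, and $\eta,\zeta$ span the normal plane of $Y(M^n)$ in $\mathbb{R}^{n+4}_2$. The lower bound is immediate: since $x$ has no umbilic points the radii $r_i$ are not all equal, so $\rho^2=\sum_i(r-r_i)^2>0$ at every point. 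For the upper bound the key device is the constant vector $\mathbf a=(0,\dots,0,-1)\in\mathbb{R}^{n+4}_2$, which is unit timelike, $\langle\mathbf a,\mathbf a\rangle=-1$, and satisfies $\langle Y,\mathbf a\rangle=\rho$; hence $\langle Y_i,\mathbf a\rangle=\nabla_i\rho$ and the Hessian of $\rho$ can be read off from $\langle\nabla^2 Y,\mathbf a\rangle$.

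First I would expand $\mathbf a$ in the adapted frame. A direct computation gives $\langle N,\mathbf a\rangle=-\tfrac{1+|\nabla\rho|^2}{2\rho}$, and, because the whole normal plane $\mathrm{span}(\eta,\zeta)$ lies in the hyperplane orthogonal to $\mathbf a$, the normal components of $\mathbf a$ vanish. Substituting the L-isotropic hypothesis $\mathbb{L}_{ij}=\lambda g_{ij}$ into the structure equation $\nabla^2_{ij}Y=-\mathbb{L}_{ij}Y-g_{ij}N+\mathbb{B}_{ij}\eta+(\dots)\zeta$ and pairing with $\mathbf a$ then yields the pointwise identity
\[
\mathrm{Hess}_g\,\rho=\Big(\tfrac{1+|\nabla\rho|^2}{2\rho}-\lambda\rho\Big)\,g .
\]
In particular the traceless part of $\mathrm{Hess}_g\,\rho$ vanishes, so $\rho$ is a concircular function on $(M^n,g)$.

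Next I would feed the same decomposition of $\nabla^2 Y$ into the Gauss equation of $Y$ in the flat space $\mathbb{R}^{n+4}_2$. Since the part of $\nabla^2 Y$ lying in $\mathrm{span}(\eta,\zeta)$ points along a \emph{fixed null} direction (it is $g$-orthogonal to $Y$ and $N$), its contribution to $\langle\,\mathrm{II},\mathrm{II}\,\rangle$ is zero and only the $Y$–$N$ cross term survives; using $\mathbb{L}_{ij}=\lambda g_{ij}$ this gives $R_{ijkl}=2\lambda\,(g_{ik}g_{jl}-g_{il}g_{jk})$, i.e. $(M^n,g)$ has constant sectional curvature $2\lambda$. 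This rigidity, together with the Hessian identity, is the structural heart of the argument. Using $\nabla_k|\nabla\rho|^2=2\,\mathrm{Hess}_g\,\rho(\nabla\rho,\partial_k)$ and the displayed identity, I would then integrate the resulting first-order ODE along the gradient flow of $\rho$ to obtain a first integral of the form
\[
|\nabla\rho|^2=-2\lambda\rho^2+C\rho-1 ,\qquad C=\text{const}>0 .
\]

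The upper bound should then follow from $|\nabla\rho|^2\ge 0$, which forces $2\lambda\rho^2\le C\rho-1$ and confines $\rho$ to the interval cut out by this quadratic; the strict inequality $\rho^2<\tfrac{1}{2\lambda}$ comes from the fact that $|\nabla\rho|$ cannot vanish identically, for otherwise $\rho$ would be constant, the hypersurface would be L-isoparametric, and Proposition~\ref{propo introducao....} would force $\lambda=0$, contradicting $\lambda>0$. I expect the main obstacle to be exactly this last step: the careful sign and positivity bookkeeping in the frame computation that pins down $\langle N,\mathbf a\rangle$ and the constant in the first integral, and the branch/regularity analysis needed to select the correct branch of the quadratic and so convert the first integral into the strict pointwise bound, rather than the reverse inequality that a naive maximum-principle argument on the constant-curvature metric would produce.
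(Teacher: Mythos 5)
Your lower bound is fine, and the idea of pairing the structure equations with the constant timelike vector $\mathbf a=(0,\ldots,0,-1)$, for which $\langle Y,\mathbf a\rangle=\rho$, is legitimate; but the identity you extract from it has the wrong sign on the decisive term, and even with your sign it would not yield the bound. With the paper's conventions, decomposing $\mathbf a$ in the Laguerre frame and using \eqref{relacoes de ortogonalidade para Y} together with $\langle P,\mathbf a\rangle=0$ gives $\langle N,\mathbf a\rangle=\frac{1+|\nabla_g\rho|^2}{2\rho}$ (note in passing that $\langle\eta,\mathbf a\rangle=r\neq 0$ in general, so your claim that the normal plane is orthogonal to $\mathbf a$ is false, though only $\langle P,\mathbf a\rangle=0$ is actually needed). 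On the other hand, the L-isotropic hypothesis gives $N=\lambda Y+\alpha$ with $\alpha$ constant (equation \eqref{N = lambda Y}), whence $\langle N,\mathbf a\rangle=\lambda\rho+\alpha_{n+4}$. Comparing the two expressions yields, globally and without any gradient-flow integration, the first integral
\[
|\nabla_g\rho|^2=2\lambda\rho^2+2\alpha_{n+4}\,\rho-1,
\]
with a \emph{plus} sign on $2\lambda\rho^2$, not the minus sign you wrote. The condition $|\nabla_g\rho|^2\geq 0$ then reads $2\lambda\rho^2+C\rho-1\geq 0$, which excludes small values of $\rho$ and permits arbitrarily large ones: it gives no upper bound. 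And even if your sign were correct, the constraint $2\lambda\rho^2-C\rho+1\leq 0$ only confines $\rho$ to the interval $[\rho_-,\rho_+]$ between the roots, whose product is $\rho_-\rho_+=\frac{1}{2\lambda}$, so that $\rho_+\geq\frac{1}{\sqrt{2\lambda}}$; positivity of $|\nabla\rho|^2$ alone can never select the branch $\rho^2<\frac{1}{2\lambda}$, and you supply no mechanism that does.

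The ingredient you are missing is the one that actually drives the paper's proof: the explicit expression \eqref{expressao dos LIJ na observacao} of the Laguerre tensor $L_{ij}$ in terms of $\log\rho$ and the third fundamental form $III=d\xi\cdot d\xi$. Its trace under the hypothesis $L_{ij}=\lambda\delta_{ij}$ gives \eqref{traço da expressao Lij com isotropica}, a second and independent relation among $\Delta_{III}\log\rho$, $|\nabla_{III}\log\rho|^2$ and $\rho$. Subtracting from it the identity $\Delta_{III}\log\rho=2n\lambda\rho^2$ (equation \eqref{expressao em coordenadas da III para l=n+4}, obtained from the last coordinate of $\Delta_gY=2n\lambda Y+n\alpha$) produces \eqref{gradiente de pho}, namely $\frac{n-2}{2}|\nabla_{III}\log\rho|^2=\frac{n}{2}-n\lambda\rho^2$, where the $\lambda\rho^2$ term now sits with the sign that makes nonnegativity of the left-hand side, together with $n\geq 3$ from Corollary \ref{mais uma observacao sobre lambra positivo}, deliver $\rho^2\leq\frac{1}{2\lambda}$. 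Your argument uses only the structure equations for $Y$ and never the formula expressing $\mathbb L$ through $\rho$ and $\xi$, so it cannot close. Finally, your strictness argument is also off: $\rho$ constant does not make $x$ L-isoparametric (that requires the Laguerre principal curvatures $b_i=\rho^{-1}(r-r_i)$ to be constant), and ``$|\nabla\rho|$ does not vanish identically'' would at best give the strict inequality at some point rather than everywhere.
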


\section{Preliminares}
In this section, we recall the Laguerre invariant tensors and the structure equations for hypersurfaces in 
$\mathbb{R}^n$. For more details, we refer to 
\cite{Li pequeno}. 

Let $\mathbb{R}^{n+4}_2$ be the space $\mathbb{R}^{n+4}$ with the inner product

\begin{equation}\label{produto interno com dois menos}
\langle X, Y \rangle = -X_1Y_1 + X_2Y_2 +\cdots + X_{n+3}Y_{n+3} - 
X_{n+4}Y_{n+4}.
\end{equation} 
We define the \textit{light cone} $C^{n+3}$ in $\mathbb{R}^{n+4}_{2}$ by
\begin{equation}\label{cone de luz}
C^{n+3}=\{X\in\mathbb{R}^{n+4}\, \, ;\, \,  \langle X,X\rangle=0\}.
\end{equation}
Let $L\mathbb{G}$ be the subgroup of the orthogonal group 
$\mathbb{O}(n+2,2)$ in
$\mathbb{R}^{n+4}_2$ given by
\begin{equation}
L\mathbb{G} = \{T\in\mathbb{O}(n+2,2)\, \, \, ;\, \, \,  PT=P\},
\end{equation}
where $P=(1, -1, \Vec{0},0)$, $\Vec{0}\in\mathbb{R}^{n+1}$, is a lightlike vector in  $\mathbb{R}^{n+4}_2$ and
$\mathbb{O}(n+2,2)$ is the orthogonal group preserving the inner product
(\ref{produto interno com dois menos}).

Let $x:M^n\longrightarrow\mathbb{R}^{n+1}$ be an orientable hypersurface without umbilic points
and with nonvanishing principal curvatures. 
Consider 
$\{e_1,e_2,\ldots, e_n\}$ an orthonormal basis with respect to the metric $dx\cdot dx$ for $TM$, consisting
of unit principal vectors, i.e.,
\begin{equation}\label{equacao de estrutura da x hiper. orientada}
e_i(\xi) = -k_ie_{i}(x),
\, \, \, 1\leq i\leq n,
\end{equation}
where $k_i$ are the nonzero principal curvatures corresponding to $e_i$. Define
\begin{equation}\label{estou com fomeeee}
r_i=\dfrac{1}{k_i},\, \, \, \, \,  \, \, \, \, \, \,   r=\dfrac{r_1+r_2+\cdots +r_n}{n}, \, \, \, \,  \, \, \, \, \, \,  \rho = \sqrt{\sum_{i}(r-r_i)^2},
\end{equation}
where $r_i$ is the \textit{radius of curvature} and $r$ the \textit{mean radius of curvature} of $x$.

We define the map
\begin{equation}\label{definicao da Y}
Y=\rho (x\cdot\xi,-x\cdot\xi,\xi, 1): M^n\longrightarrow C^{n+3}\subset\mathbb{R}^{n+4}_2,
\end{equation} 
where $C^{n+3}$ is given by \eqref{cone de luz} and 
$\rho$ by \eqref{estou com fomeeee} and $x\cdot\xi$ denotes the  euclidean inner product. The vector
$Y$ is called the \textit{Laguerre position vector of the immersion $x$}, which provides the imersion of the hypersurface  $x$ into the light cone of $\mathbb{R}^{n+4}_2$.
Note that $\rho$ never vanishes, since $x$ has no umbilic points. Moreover, we can write
\begin{equation}\label{relacao entre Y e y}
Y=\rho y,\, \, \, \, \, \, \text{where}\, \, \, \, \, \,  y = (x\cdot\xi,-x\cdot\xi,\xi, 1).
\end{equation}

\begin{theorem}(\cite{Li pequeno})
\label{teorema que caracteriza equivalente por t.laguerre com y}
Let $x,x^*: M^n\longrightarrow\mathbb{R}^{n+1}$ be two oriented hypersurfaces
with nonvanishing principal curvatures. Then $x$ and $x^*$ are
equivalent under Laguerre transformations if, and only if, there exists $T\in L\mathbb{G}$ such that
$Y^* = YT$, where $Y$ and $Y^*$ are the Laguerre positions vectors of $x$ and $x^*$ respectively.
\end{theorem}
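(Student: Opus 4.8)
The statement is an ``if and only if,'' so the plan is to treat the two implications separately, using the dictionary between the abstract group of Laguerre transformations acting on the unit tangent bundle $U\mathbb{R}^{n+1}$ and the linear group $L\mathbb{G}$ acting on the light cone $C^{n+3}$. The organizing principle is that the lift $y=(x\cdot\xi,-x\cdot\xi,\xi,1)$ records exactly the oriented tangent-hyperplane congruence of $x$, i.e. its support data $x\cdot\xi$ together with the Gauss image $\xi$, which is the object on which Laguerre transformations act; and that the scalar $\rho$ is the unique density making $Y=\rho y$ transform \emph{linearly}. Note first that $\langle y,y\rangle=-(x\cdot\xi)^2+(x\cdot\xi)^2+|\xi|^2-1=0$, so $y$, and hence $Y$, lies on $C^{n+3}$, and a short computation gives $\langle Y,P\rangle=0$; this is the constraint that $L\mathbb{G}$, via $PT=P$, will preserve.

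For the direction $(\Leftarrow)$, assume $Y^*=YT$ with $T\in L\mathbb{G}$. Since $T\in\mathbb{O}(n+2,2)$ preserves the inner product \eqref{produto interno com dois menos}, differentiating yields $g^*=\langle dY^*,dY^*\rangle=\langle dY,dY\rangle=g$, so the Laguerre metrics coincide, while the condition $PT=P$ guarantees that the distinguished lightlike direction singled out by Laguerre geometry is preserved. I would then recover the hypersurface from its light-cone lift: writing $Y=\rho y$ with $\langle y,y\rangle=0$ and normalizing the last coordinate to $1$ recovers $y$, hence the pair $(x\cdot\xi,\xi)$, i.e. the oriented tangent-hyperplane congruence. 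Because the principal curvatures are nonvanishing, $\xi$ is a local diffeomorphism and this congruence determines $x$ as its envelope, so $T$ induces a well-defined Laguerre transformation $\sigma$ with $x^*=\sigma\circ x$.

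For the direction $(\Rightarrow)$, assume $x$ and $x^*$ are Laguerre equivalent, say $x^*$ is obtained from $x$ by a Laguerre transformation $\sigma$. The action of $\sigma$ on oriented hyperplanes lifts to a linear map $T\in\mathbb{O}(n+2,2)$ preserving $C^{n+3}$ and fixing $P$, hence $T\in L\mathbb{G}$. Applying $T$ to the lift of the congruence and renormalizing the last coordinate back to $1$ gives $y^*=\mu^{-1}\,yT$ for a positive factor $\mu$. Substituting into $Y^*=\rho^*y^*$ and comparing with $YT=\rho\,yT$, the desired identity $Y^*=YT$ reduces to the single transformation law $\rho^*=\mu\rho$.

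The main obstacle is precisely this last identity: one must show that the normalization $\rho=\sqrt{\sum_i(r-r_i)^2}$, built from the curvature radii $r_i$ and the mean radius $r$ in \eqref{estou com fomeeee}, is the exact Laguerre density, i.e. that under $\sigma$ the radii and the tangent-plane lift rescale so that $\rho$ picks up the same factor $\mu$ as $y$. I would establish this by computing the effect of a generating family of Laguerre transformations on the shape operator, tracking how each $r_i$ and $r$ transform. The normal shifts $x\mapsto x+t\xi$ send $r_i\mapsto r_i+t$ and $r\mapsto r+t$, hence leave every difference $r-r_i$, and therefore $\rho$, invariant, which matches $\mu=1$ and the fact that the lift changes only by the shear $y\mapsto y+t\,P$; the Euclidean motions likewise fix all $r_i$ and act orthogonally with $\mu=1$; and the dilations rescale all radii, and thus $\rho$, uniformly by the same factor as the renormalization $\mu$. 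Combining the two implications then yields the equivalence.
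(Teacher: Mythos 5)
The paper itself gives no proof of this statement: it is quoted from Li--Wang \cite{Li pequeno}, so your attempt has to be measured against the standard argument there. Your skeleton is the right one --- the contact lift $y=(x\cdot\xi,-x\cdot\xi,\xi,1)$ transforms projectively under Laguerre transformations, $\rho$ is the density that should make $Y=\rho y$ transform linearly, and everything reduces to the identity $\rho^*=\mu\rho$ --- and your direction $(\Leftarrow)$ is acceptable modulo the isomorphism between $L\mathbb{G}$ and the Laguerre transformation group of $U\mathbb{R}^{n+1}$, which \cite{Li pequeno} establishes before this theorem.

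The genuine gap is in $(\Rightarrow)$: the transformations you check --- Euclidean motions, parallel transformations $x\mapsto x+t\xi$, and dilations --- do not generate the Laguerre group, so the reduction to generators collapses. The group $L\mathbb{G}$ is the stabilizer of the lightlike vector $P$ in $\mathbb{O}(n+2,2)$, of dimension $(n+2)(n+3)/2$, isomorphic to the Poincar\'e group $\mathbb{R}^{n+2}\rtimes O(n+1,1)$ of the cyclographic model $\mathbb{R}^{n+2}_1$; Euclidean motions together with parallel shifts give only dimension $(n+1)(n+2)/2+1$. The missing $(n+1)$-parameter family consists of the Lorentz boosts (classically, the Laguerre inversions): precisely the Laguerre transformations that do not map points to points but map points to oriented spheres of nonzero radius. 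These are the only generators for which $\rho^*=\mu\rho$ has any content: for Euclidean motions and parallel transformations one has $\mu\equiv1$ and $\rho^*=\rho$, so your verification never tests the theorem's actual assertion, namely that $\rho$ --- built from the second-order data $r_i$ --- rescales by the same \emph{pointwise-varying} function $\mu$ as the lift $y$. Under a boost, $\mu$ is a nonconstant function on $M^n$ and the curvature radii transform by a nontrivial fractional-linear law; that computation is the heart of the proof in \cite{Li pequeno} and is absent from yours. Your dilation case is, moreover, inconsistent with the paper's conventions: under $x\mapsto\lambda x$ one gets $\rho^*=\lambda\rho$ while the lift $y^*=(\lambda x\cdot\xi,-\lambda x\cdot\xi,\xi,1)$ already has last coordinate $1$ (so $\mu=1$), and the linear map forced by $Y^*=YT$ scales the inner product \eqref{produto interno com dois menos} by $\lambda^2$; dilations in fact do not lie in $L\mathbb{G}$ at all (they scale the Laguerre metric $g$ by $\lambda^2$, whereas $g$ is a Laguerre invariant), which is another sign that your generating-set picture is off. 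If you want to avoid listing generators altogether, the cleaner route is to characterize $Y$ invariantly: among all lifts $Z=f\,y$ of the contact-element map into the light cone, $Y$ is the unique one (up to sign) whose induced tensor $\mathbb{B}$ satisfies the normalizations $\sum_i B_{ii}=0$ and $\sum_{ij}B_{ij}^2=1$ of \eqref{identidades que os B satisfazem}; since $T$ preserves the inner product, $YT$ is such a normalized lift for $x^*$, and uniqueness then yields $Y^*=YT$.
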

Consider $y=Y/\rho$ given by \eqref{relacao entre Y e y}. Let
$III=\langle dy,dy \rangle$ be the
third fundamental form of  $x$. Let
$\{E'_1, \ldots, E'_n \} $ be an orthonormal basis for
$III = \langle dy,dy\rangle= d\xi\cdot d\xi$, where the elements
of this basis can be written as $E'_{i} = r_ie_i $,  $1\leq i\leq n $.

The \textit{Laguerre metric of  $x$} is defined by,
\begin{equation}\label{metrica de Laguerre}
g = \left(\sum_{i}(r-r_i)^2 \right)III
= \rho^2 III,
\end{equation}
where $r_i$ and $r$ are, respectively, the radius of curvature and the mean radius of curvature of  $x$.
It follows from Theorem \ref{teorema que caracteriza equivalente por t.laguerre com y} 
that the metric \eqref{metrica de Laguerre} is a Laguerre invariant.

Let $\Delta_{g}$  be the Laplacian operator of the metric $g$, 
we define
\begin{equation}\label{definicao do N}
N = \dfrac{1}{n}\Delta_{g} Y + \dfrac{1}{2n^2}\langle \Delta_{g} Y, \Delta_{g} Y \rangle Y,
\end{equation}
and
\begin{eqnarray}\label{aplicacao normal de laguerre}
\eta 
&=&\left(\dfrac{1}{2}(1+ \vert x \vert ^2), 
\dfrac{1}{2}(1- \vert x\vert ^2), x, 0 \right) + r(x\cdot\xi, - x\cdot\xi, \xi, 1).
\end{eqnarray}
The map $\eta: M^n\longrightarrow C^{n+3}\subset\mathbb{R}^{n+4}_2$ 
defined by
(\ref{aplicacao normal de laguerre}) is called the
\textit{Laguerre normal map}, where
$C^{n+3}$  is the light cone given in
\eqref{cone de luz}. 
The following relations are satisfied in the metric
(\ref{produto interno com dois menos}),
\begin{equation}\label{relacoes de ortogonalidade para Y}
\langle Y , Y \rangle = \langle N , N \rangle =0,\, \, \, 
\langle Y, N\rangle = -1,\, \, \, 
 \langle \eta, \eta\rangle = 0,
\langle \eta , P \rangle = -1.
\end{equation}
Let $\{E_1, \ldots, E_n \} $ be vector fields in $\mathbb{R}^{n+4}_2$ tangent to $Y$ consisting
of an orthonormal basis with respect to the metric $g$
with dual basis $\{\omega_1, \ldots, \omega_n\} $.
Then we have the following orthogonal decomposition
\begin{equation*}
\mathbb{R}^{n+4}_{2} = span\{Y,N \}\oplus span\{ E_{1}(Y), \ldots, E_{n}(Y)\}\oplus
\{\eta, P \}.
\end{equation*}

We call $\{Y,N, E_{1}(Y), \ldots, E_{n}(Y), \eta, P  \} $ 
the
\textit{Laguerre moving frame} in $\mathbb{R}^{n+4}_{2}$ of $x$.
The structure equations for the frame $\{Y,N, E_{1}(Y), \ldots, E_{n}(Y), \eta, P \} $ are written as
\begin{equation}\label{equacoes de estrutura menos gerais}
\begin{array}{lc}
E_{i}(N) = \displaystyle\sum _{j} L_{ij} E_{j}(Y) + C_i P,\\
E_{j}(E_{i}(Y)) = L_{ij}Y + \delta_{ij}N + \displaystyle\sum _{k}\Gamma^{k}_{ij} E_{k}(Y) + B_{ij}P,\\
E_{i}(\eta) =  -C_iY + \displaystyle\sum _{j} B_{ij} E_{j}(Y),
\end{array}
\end{equation}
where $\Gamma ^{k}_{ij} = \omega_{ik}(E_j)$ are 
differentiable functions defined on $M^n$.
From equations  \eqref{equacoes de estrutura menos gerais} we define the following Laguerre invariants.
\begin{definition}\label{definicao dos tensores B L e C}
\rm The Laguerre invariant tensors
$\mathbb{B}=\sum_{ij}B_{ij}\omega_{i}\otimes\omega_{j}$ and 
$\mathbb{L}=\sum_{ij}L_{ij}\omega_{i}\otimes\omega_{j}$ are called the \textit{Laguerre second fundamental form} and the  
\textit{Laguerre tensor},
respectively. We also define the form
$\mathbb{C}=\sum_{i}C_{i}\omega_{i}$, which is called \textit{Laguerre form}. 
\end{definition}
Taking derivatives of the structure equations
\eqref{equacoes de estrutura menos gerais},
we obtain the following relations among the invariants:

\begin{equation}\label{relacao entre os invariantes L, B e C}
\begin{array}{lc}
L_{ij,k} = L_{ik,j},\\
C_{i,j} - C_{j,i} = \displaystyle\sum_{k}\left( B_{ki}L_{kj} - B_{jk}L_{ik}\right),\\
B_{ij,k} - B_{ik,j} = C_{j}\delta_{ik} - C_{k}\delta_{ij},\\
R_{ijkl} = L_{jk}\delta_{il} + L_{il}\delta_{jk} - L_{ik}\delta_{jl} - L_{jl}\delta_{ik},
\end{array}
\end{equation}
where $\{L_{ij,k}\}$, $\{C_{i,j}\}$, 
$\{B_{ij,k}\}$  are the covariant derivatives
of the components of the tensors  $\mathbb{L}$, 
$\mathbb{B}$ and $\mathbb{C}$  with respect to the
metric $g$ and
$R_{ijkl}$ is the curvature tensor of the metric $g$.
Furthermore, the following identities are satisfied:
\begin{equation}\label{identidades que os B satisfazem}
\sum_{ij}B^2_{ij} = 1,\, \, \,  \, \, \,  \, \, \, \sum_{i}B_{ii} = 0,
\, \, \, \, \, \,  \, \, \,   \sum_{i}L_{ii}= -\dfrac{1}{2n}\langle 
\Delta_{g}Y, \Delta_{g}Y \rangle,\, \, \,  \, \,\, \, \sum_{i}B_{ij,i} = (n-1)C_{j}.
\end{equation}

Consider $E'_{i} =r_{i}e_{i}$, $1\leq i\leq n$ an orthonormal basis for $d\xi\cdot d\xi$. Then,
the fields $E_{i}=\rho^{-1}E'_{i}$, $1\leq i\leq n$, 
form an orthonormal basis in the Laguerre metric
$g=\rho^{2}d\xi\cdot d\xi$ with dual basis $\omega_{i}$. Thus, the following equalities hold
\begin{eqnarray}\label{expressao dos B e C em termos dos ei}
\begin{array}{lc}
B_{ij}=\rho^{-1}(r-r_{i})\delta_{ij},\\
\\
C_{i} = -\rho^{-2}r_{i}\{e_{i}(r) - (r-r_{i})e_{i}(log\rho) \},
\end{array}
\end{eqnarray}
\begin{equation}\label{expressao dos LIJ na observacao}
    L_{ij} = \rho^{-2}\bigg\{(Hess_{ij}(log\rho))_{III} -
r_ir_je_{i}(log\rho)e_{j}(log\rho) + \dfrac{1}{2}\bigg(   
\vert \nabla_{III}log\rho\vert^2 - 1 \bigg)\delta_{ij}\bigg\},
\end{equation}
where $Hess_{ij}$ and $\nabla_{III}$ are respectively the element $ij$ of the Hessian matrix and the gradient with respect to the third fundamental form 
$III=d\xi.d\xi$ of $x$. 
\begin{remark}
  \rm  From now on, we use the notation
    $E_i$, $E'_i$ and $e_i$ to refer to the orthonormal basis in the metrics  $g$, 
    $III=  d\xi\cdot d\xi$ and $ dx\cdot dx$, respectively. 
The relation among these basis is given by the expression \begin{equation}\label{relacao entre as bases de g e III}
E_i = \rho^{-1}E'_{i} = \rho^{-1}r_ie_i , \, \, \,  1\leq i\leq n.
\end{equation}
\end{remark}
We denote by $b_i$ the eigenvalues of the Laguerre second fundamental form $\mathbb{B}$, which are called \textit{Laguerre principal curvatures} of $x$. Then, from (\ref{expressao dos B e C em termos dos ei}),
the Laguerre principal curvatures are
\begin{equation}\label{curvaturas principais de Laguerre}
b_{i} = \rho^{-1}(r - r_{i}).
\end{equation}
The following proposition was obtained by Song, Y.P. in
\cite{isop 2 distintas}.
\begin{proposition}\label{propo. do Song duas ki}
(\cite{isop 2 distintas})
Let $x:M^n\rightarrow\mathbb{R}^{n+1}$ be a hypersurface without umbilic points, with two distinct nonvanishing principal 
curvatures $k_1$ e $k_2$ ($k_2<k_1$) with multiplicities $m$ e $n-m$, respectively. Then the Laguerre principal curvatures are constant given by
\begin{equation}
b_{1} =  \sqrt{\dfrac{(n-m)}{mn}}, \,  \,  \,  \,  \,  \, b_{2} = - \sqrt{\dfrac{m}{n(n-m)}}.
\end{equation}
\end{proposition}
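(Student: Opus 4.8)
The plan is to reduce everything to the explicit formula $b_i = \rho^{-1}(r - r_i)$ from \eqref{curvaturas principais de Laguerre} and to exploit that, with only two distinct principal curvatures, every quantity built from the radii of curvature collapses to an expression in $m$ and $n$ alone. First I would record that the radii of curvature take exactly two values, $r_1 = 1/k_1$ with multiplicity $m$ and $r_2 = 1/k_2$ with multiplicity $n-m$, so that the mean radius of curvature in \eqref{estou com fomeeee} is $r = \frac{1}{n}\left(m r_1 + (n-m) r_2\right)$. A direct subtraction then gives the two differences
\begin{equation*}
r - r_1 = \frac{(n-m)}{n}(r_2 - r_1), \qquad r - r_2 = -\frac{m}{n}(r_2 - r_1),
\end{equation*}
which already shows that $r - r_1$ and $r - r_2$ have opposite signs and are each a fixed multiple of the single quantity $(r_2 - r_1)$.

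Next I would compute $\rho$. Since $m$ of the radii equal $r_1$ and the remaining $n-m$ equal $r_2$, the definition $\rho^2 = \sum_i (r - r_i)^2$ in \eqref{estou com fomeeee} becomes $\rho^2 = m(r-r_1)^2 + (n-m)(r-r_2)^2$; substituting the two differences above and factoring out $(r_2 - r_1)^2$ yields $\rho^2 = \frac{m(n-m)}{n}(r_2 - r_1)^2$, hence $\rho = |r_2 - r_1|\sqrt{m(n-m)/n}$. The decisive point is now that both $r - r_i$ and $\rho$ scale linearly with the single quantity $(r_2 - r_1)$, so in the ratio $b_i = \rho^{-1}(r - r_i)$ this factor cancels (up to its sign); this cancellation is exactly what forces the Laguerre principal curvatures to be constant, independent of the possibly nonconstant values of $k_1$ and $k_2$. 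Carrying out the cancellation and simplifying the surviving expression in $m$ and $n$ gives $|b_1| = \sqrt{(n-m)/(mn)}$ for the direction of multiplicity $m$ and $|b_2| = \sqrt{m/(n(n-m))}$ for the direction of multiplicity $n-m$.

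Finally I would fix the signs. The ordering $k_2 < k_1$, under the standing orientation assumption, gives $r_2 > r_1$, so $r - r_1 > 0$ and $r - r_2 < 0$, whence $b_1 > 0$ and $b_2 < 0$, which is the claimed statement. As an independent check — and an alternative route that avoids computing $\rho$ explicitly — one can instead invoke the two algebraic identities $\sum_i b_i = 0$ and $\sum_i b_i^2 = 1$ from \eqref{identidades que os B satisfazem}: writing these for $m$ copies of $b_1$ and $n-m$ copies of $b_2$ yields the system $m b_1 + (n-m) b_2 = 0$ and $m b_1^2 + (n-m) b_2^2 = 1$, whose solution reproduces the same two values. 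I do not anticipate a genuine obstacle here, since the argument is a finite computation; the only point requiring care is the sign convention, namely justifying $r_2 > r_1$ from $k_2 < k_1$ so that $b_1$ is the positive root while $b_2$ is the negative one.
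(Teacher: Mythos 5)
The paper does not prove this proposition: it is imported verbatim from Song \cite{isop 2 distintas} and stated without proof, so there is no internal argument to compare against. Your direct computation from $b_i=\rho^{-1}(r-r_i)$ is the natural one and the algebra checks out: with $m$ radii equal to $r_1$ and $n-m$ equal to $r_2$ one gets $r-r_1=\frac{n-m}{n}(r_2-r_1)$, $r-r_2=-\frac{m}{n}(r_2-r_1)$ and $\rho^2=\frac{m(n-m)}{n}(r_2-r_1)^2$, so the factor $|r_2-r_1|$ cancels in the ratio and the Laguerre principal curvatures are constant with the stated absolute values. Your cross-check via $\sum_i b_i=0$ and $\sum_i b_i^2=1$ from \eqref{identidades que os B satisfazem} is also sound and recovers the same values up to one overall sign.

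The one step that does not hold as written is the sign determination. You assert that $k_2<k_1$ forces $r_2>r_1$; this is true only when $k_1$ and $k_2$ have the same sign, because $t\mapsto 1/t$ is decreasing on each half-line but not on $\mathbb{R}\setminus\{0\}$. If $k_2<0<k_1$ (which the hypotheses permit, since only nonvanishing is assumed), then $r_2<0<r_1$, the factor $r_2-r_1$ is negative, and your own formulas yield $b_1<0$ and $b_2>0$, the opposite of the stated signs. To close this you must either add the hypothesis that the two principal curvatures have the same sign, or observe that reversing the orientation $\xi\mapsto-\xi$ flips every $r_i$, hence every $b_i$, and then fix the orientation (equivalently, the labelling of $k_1$ versus $k_2$) so that $b_1>0$. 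What your argument actually establishes unconditionally is that the $b_i$ are constant and $\{b_1,b_2\}=\bigl\{\pm\sqrt{(n-m)/(mn)},\,\mp\sqrt{m/(n(n-m))}\bigr\}$; the defect is minor and arguably inherited from the statement itself, but the phrase ``under the standing orientation assumption'' does not by itself justify $r_2>r_1$.
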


\section{L-Isotropic Hypersurface in \texorpdfstring{$\mathbb{R}^n$}{Rn}}

In this section, we define  
L-isotropic hypersurfaces  
in $\mathbb{R}^{n+1}$ and we show  
that the function $\rho^2$, defined in terms of the curvature radio of the immersion $x$ by \eqref{oioiiiii....}, is bounded above by $\dfrac{1}{2\lambda}$, where $\lambda$ is the common value of the eigenvalues of the Laguerre tensor $\mathbb{L}$.

\begin{definition}
\rm
Let $x:M^n\rightarrow\mathbb{R}^{n+1}$ be a  
hypersurface without umbilic points, with nonvanishing  
principal curvatures. We say that $x$ is an  
\textit{L-isotropic hypersurface}  
if the Laguerre form $\mathbb{C}$ vanishes and  
the eigenvalues of the Laguerre tensor  
$\mathbb{L}$ are all equal to a constant, i.e.,  
$L_{ij}= \lambda\delta_{ij}.$
\end{definition}
Let $x$ be L-isotropic. It follows from \eqref{relacao entre os invariantes L, B e C}  
that the tensors $\mathbb{B}$ and $\mathbb{L}$ commute.  
Thus, we can choose an orthonormal frame $\{E_1, \ldots, E_n\}$ such that,

\begin{equation}\label{tensores B e L comutando ingles..}
B_{ij} = b_i\delta_{ij},\, \, \, \, \,  L_{ij} = \lambda\delta_{ij},
\end{equation}
where $b_i$, $i=1,\ldots,n$ are the Laguerre principal curvatures.
\begin{proposition}\label{lambra maior ou igual a zero}(\cite{isotropicas})
Let $x:M^n\rightarrow\mathbb{R}^{n+1}$ be an  
oriented hypersurface without umbilic points, with nonvanishing  
principal curvatures. If $x$ is  
an L-isotropic hypersurface, then  
the eigenvalues of the tensor $\mathbb{L}$ are  
non-negative, i.e., $\lambda\geq 0$. Moreover,  
the Laguerre second fundamental form $\mathbb{B}$ and the components of the curvature tensor $R_{ijkl}$  
with respect to the Laguerre metric $g$ satisfy  
the following relations,
\begin{eqnarray} \label{bij,k = bik,j coreccao...}
&&B_{ij,k} = B_{ik,j},\, \, \, \, \, i,j,k = 1,\ldots,n, \\ \label{Rijkl da isotropica correcaoo.}
&& R_{ijkl} = 2\lambda (\delta_{jk}\delta_{il} - \delta_{ik}\delta_{jl}), \\ \label{bij,k = 2nlambda...}
&&\sum_{ijk}(B_{ij,k})^2 = 2n\lambda.
\end{eqnarray}

\end{proposition}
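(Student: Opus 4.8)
The plan is to derive the first two identities by direct substitution into the structure relations \eqref{relacao entre os invariantes L, B e C}, and then to establish the third via a Bochner-type argument from which the non-negativity of $\lambda$ follows immediately. First, since $x$ is L-isotropic the Laguerre form vanishes, so $C_j = 0$ for every $j$; the third relation in \eqref{relacao entre os invariantes L, B e C} then reduces to $B_{ij,k} - B_{ik,j} = 0$, giving \eqref{bij,k = bik,j coreccao...}. Combined with $B_{ij} = B_{ji}$, this shows $B_{ij,k}$ is totally symmetric in $i,j,k$, a fact I will use repeatedly. For the curvature, substituting $L_{ij} = \lambda\delta_{ij}$ into the last relation of \eqref{relacao entre os invariantes L, B e C} gives $R_{ijkl} = \lambda(\delta_{jk}\delta_{il} + \delta_{il}\delta_{jk} - \delta_{ik}\delta_{jl} - \delta_{jl}\delta_{ik}) = 2\lambda(\delta_{jk}\delta_{il} - \delta_{ik}\delta_{jl})$, which is \eqref{Rijkl da isotropica correcaoo.}.

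For the identity \eqref{bij,k = 2nlambda...}, I would exploit that $\sum_{ij}B_{ij}^2 = 1$ is constant by \eqref{identidades que os B satisfazem}. Applying $\Delta_g$ and expanding yields the Bochner identity $0 = \sum_{ijk}(B_{ij,k})^2 + \sum_{ij}B_{ij}\big(\sum_k B_{ij,kk}\big)$, so the task reduces to computing the rough Laplacian $\sum_k B_{ij,kk}$ and pairing it with $B_{ij}$. Using total symmetry I would rewrite $\sum_k B_{ij,kk} = \sum_k B_{ik,jk}$ and commute the last two covariant derivatives via the Ricci identity $B_{ik,jk} - B_{ik,kj} = \sum_m B_{mk}R_{mijk} + \sum_m B_{im}R_{mkjk}$. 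The term $\sum_k B_{ik,kj}$ should drop out, because total symmetry gives $\sum_k B_{ik,k} = \sum_k B_{kk,i} = \nabla_i(\operatorname{tr}\mathbb{B}) = 0$ (equivalently, this is the divergence relation $\sum_i B_{ij,i} = (n-1)C_j = 0$ from \eqref{identidades que os B satisfazem}).

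It then remains to contract the two curvature terms using the explicit form \eqref{Rijkl da isotropica correcaoo.} together with $\sum_m B_{mm} = 0$ and $\sum_m B_{im}\delta_{mj} = B_{ij}$; I expect these to give $\sum_{km}B_{mk}R_{mijk} = -2\lambda B_{ij}$ and $\sum_{km}B_{im}R_{mkjk} = -2\lambda(n-1)B_{ij}$, hence $\sum_k B_{ij,kk} = -2n\lambda B_{ij}$. Substituting into the Bochner identity yields $\sum_{ijk}(B_{ij,k})^2 = 2n\lambda\sum_{ij}B_{ij}^2 = 2n\lambda$, which is \eqref{bij,k = 2nlambda...}; since the left-hand side is a sum of squares and $n \geq 1$, this forces $\lambda \geq 0$.

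I expect the main obstacle to lie in the computation of the rough Laplacian: correctly invoking total symmetry of $B_{ij,k}$ to replace $B_{ij,kk}$ by $B_{ik,jk}$, selecting the form of the Ricci commutation identity that is consistent with the sign convention for $R_{ijkl}$ fixed in \eqref{relacao entre os invariantes L, B e C} (the opposite convention would instead force $\lambda \leq 0$, so the sign must be tracked with care), and confirming that the divergence term $\sum_k B_{ik,kj}$ vanishes. Everything else is routine substitution and contraction.
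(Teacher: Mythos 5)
Your proof is correct, and it is essentially the argument of the cited reference: the paper itself states this proposition without proof, attributing it to \cite{isotropicas}, where the identity $\sum_{ijk}(B_{ij,k})^2=2n\lambda$ is obtained exactly as you do, by applying $\Delta_g$ to the constant $\sum_{ij}B_{ij}^2=1$, commuting derivatives with the Ricci identity, and using $\sum_k B_{ik,k}=(n-1)C_i=0$. Your contractions $\sum_{km}B_{mk}R_{mijk}=-2\lambda B_{ij}$ and $\sum_{km}B_{im}R_{mkjk}=-2\lambda(n-1)B_{ij}$ check out against \eqref{Rijkl da isotropica correcaoo.} with the moving-frames form of the Ricci identity you chose, and you correctly flag the sign convention as the only delicate point.
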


The following corollary follows from the proof of Proposition \ref{lambra maior ou igual a zero}:

\begin{corollary} \label{corolario labda=0 segunda paralela}
If $x$ is an L-isotropic hypersurface, then $\nabla\mathbb{B} = 0$ if and only if $\lambda = 0$, i.e., the eigenvalues of $\mathbb{B}$ are constant if and only if $\lambda = 0$.
\end{corollary}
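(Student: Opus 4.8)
The plan is to read the corollary off directly from identity \eqref{bij,k = 2nlambda...}, which is the key output of the proof of Proposition \ref{lambra maior ou igual a zero}. That identity, $\sum_{ijk}(B_{ij,k})^2 = 2n\lambda$, already encodes both implications, so essentially no further computation is required; the only point needing thought is how to phrase the statement in terms of eigenvalues.

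First I would establish the equivalence $\nabla\mathbb{B}=0 \iff \lambda = 0$. For the forward direction, if $\nabla\mathbb{B}=0$ then every component $B_{ij,k}$ vanishes, so the left-hand side of \eqref{bij,k = 2nlambda...} is zero; since $n\geq 1$, this forces $\lambda = 0$. For the converse, if $\lambda = 0$ then \eqref{bij,k = 2nlambda...} reads $\sum_{ijk}(B_{ij,k})^2 = 0$. As the $B_{ij,k}$ are real-valued functions on $M^n$ and the left-hand side is, at each point, a finite sum of squares, every summand must vanish pointwise; hence $B_{ij,k}\equiv 0$ for all $i,j,k$, i.e. $\nabla\mathbb{B}=0$. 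This step uses only that $\lambda$ is a constant, which is guaranteed by Proposition \ref{lambra maior ou igual a zero}.

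Next I would translate $\nabla\mathbb{B}=0$ into the language of eigenvalues. Working in the orthonormal frame \eqref{tensores B e L comutando ingles..} that simultaneously diagonalizes $\mathbb{B}$ and $\mathbb{L}$, the diagonal components of the covariant derivative reduce to $B_{ii,k} = E_k(b_i)$, because the connection one-forms satisfy $\omega_{ii}=0$. Thus $\nabla\mathbb{B}=0$ immediately yields $E_k(b_i)=0$ for all $i,k$, so the Laguerre principal curvatures $b_i$ are constant; this is the sense in which a parallel $\mathbb{B}$ is read as having constant eigenvalues, and together with the previous paragraph it gives the stated corollary.

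The step demanding the most care is precisely this eigenvalue reformulation. Passing from $\nabla\mathbb{B}=0$ to constancy of the $b_i$ is immediate, but the literal converse — that constancy of the eigenvalues \emph{alone} forces $\nabla\mathbb{B}=0$ (equivalently $\lambda=0$) — is \emph{not} a consequence of \eqref{bij,k = 2nlambda...} by itself: constant eigenvalues annihilate only the diagonal components $B_{ii,k}$ and leave the totally symmetric off-diagonal components $B_{ij,k}$ (with $i,j,k$ distinct) unconstrained once $n\geq 3$. That deeper direction is exactly the content of Proposition \ref{propo introducao....}, and it is the genuine obstacle behind the corollary; for the present purpose it suffices to record the clean equivalence $\nabla\mathbb{B}=0 \iff \lambda=0$ together with the observation that a parallel $\mathbb{B}$ has constant eigenvalues.
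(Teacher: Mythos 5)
Your core argument is exactly the paper's (the paper gives no separate proof, stating only that the corollary ``follows from the proof of Proposition \ref{lambra maior ou igual a zero}''): read $\nabla\mathbb{B}=0 \iff \lambda=0$ directly off the identity $\sum_{ijk}(B_{ij,k})^2=2n\lambda$, using that a vanishing sum of squares forces each $B_{ij,k}$ to vanish pointwise. That part is correct and is the intended route.

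Your caveat about the eigenvalue rephrasing is also well taken and is not a defect of your proof but a genuine imprecision in the corollary as stated: constancy of the $b_i$ kills only the components of $\nabla\mathbb{B}$ with a repeated index (using $B_{ii,k}=E_k(b_i)$ and the total symmetry of $B_{ij,k}$ coming from \eqref{bij,k = bik,j coreccao...}), leaving the components with three distinct indices unconstrained when there are at least three distinct Laguerre principal curvatures. It is worth adding that in the paper's only application of this direction, Proposition \ref{observacao l-isotrpoicas lambda=0}, the gap is harmless: there $\mathbb{B}$ has only two eigenvalue blocks, so among any three distinct indices two lie in the same block, and total symmetry together with $B_{ij,k}=\omega_{ij}(E_k)(b_i-b_j)$ for $i\neq j$ forces every such component to vanish; in the general case the implication ``eigenvalues constant $\Rightarrow\lambda=0$'' is indeed the content of Proposition \ref{isop mais isoto}, as you say.
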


As a consequence of Corollary \ref{corolario labda=0 segunda paralela} and Proposition \ref{propo. do Song duas ki}, we have:

\begin{proposition}\label{observacao l-isotrpoicas lambda=0}
Let $x:M^n\rightarrow\mathbb{R}^{n+1}$ be a hypersurface without umbilic points, with two distinct nonvanishing principal curvatures. If $x$ is L-isotropic, then $\lambda = 0$.
\end{proposition}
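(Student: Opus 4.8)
The plan is to obtain this statement as an immediate consequence of the two results cited just before it, namely Song's Proposition \ref{propo. do Song duas ki} and Corollary \ref{corolario labda=0 segunda paralela}. The strategy is purely deductive: I would chain together the fact that two distinct principal curvatures force constant Laguerre principal curvatures with the fact that, under the L-isotropic hypothesis, constant Laguerre principal curvatures are equivalent to $\lambda = 0$.

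First I would verify that the hypotheses of Proposition \ref{propo. do Song duas ki} are met. By assumption, $x:M^n\to\mathbb{R}^{n+1}$ has no umbilic points and exactly two distinct nonvanishing principal curvatures $k_1, k_2$, say with multiplicities $m$ and $n-m$. This is precisely Song's setting, so applying that proposition yields that the Laguerre principal curvatures $b_1$ and $b_2$ are constant, given explicitly by $b_1 = \sqrt{(n-m)/(mn)}$ and $b_2 = -\sqrt{m/(n(n-m))}$. The key observation here is that the $b_i$ are, by definition, the eigenvalues of the Laguerre second fundamental form $\mathbb{B}$ (recall \eqref{curvaturas principais de Laguerre} and the definition of $\mathbb{B}$); hence the eigenvalues of $\mathbb{B}$ are constant on $M^n$.

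Next I would invoke the L-isotropic hypothesis together with Corollary \ref{corolario labda=0 segunda paralela}. That corollary states that for an L-isotropic hypersurface the eigenvalues of $\mathbb{B}$ are constant if and only if $\lambda = 0$. Since the previous step established that these eigenvalues are constant, the corollary forces $\lambda = 0$, which is exactly the desired conclusion.

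I do not expect a genuine obstacle, since both ingredients are already available; the argument is essentially a two-step syllogism. The only point requiring a moment of care is that the multiplicities $m$ and $n-m$ are well defined and constant across $M^n$—that is, the two principal curvatures remain distinct at every point—but this is built into the standing hypothesis of having exactly two distinct nonvanishing principal curvatures, and it is already presupposed by Song's proposition. With that in place, the deduction of $\lambda = 0$ is immediate.
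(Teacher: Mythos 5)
Your argument matches the paper's proof exactly: apply Proposition \ref{propo. do Song duas ki} to conclude that the Laguerre principal curvatures $b_1, b_2$ are constant (hence $\nabla\mathbb{B}=0$), then invoke Corollary \ref{corolario labda=0 segunda paralela} to get $\lambda=0$. The proposal is correct and takes essentially the same route.
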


\begin{proof}
Indeed, by Proposition \ref{propo. do Song duas ki}, we have that $b_1$ and $b_2$ are constant with multiplicities $m$ and $(n - m)$, respectively.  
Thus, we have $\nabla\mathbb{B} = 0$. Since $x$ is an L-isotropic hypersurface, it follows from Corollary \ref{corolario labda=0 segunda paralela} that $\lambda = 0$.
\end{proof}

\begin{corollary}\label{mais uma observacao sobre lambra positivo}
If $x:M^n\rightarrow\mathbb{R}^{n+1}$ ($n\geq 3$) is an L-isotropic hypersurface with $\lambda > 0$, then $x$ has at least three distinct 
nonvanishing principal curvatures.
\end{corollary}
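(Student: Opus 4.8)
The plan is to argue by contradiction, using Proposition~\ref{observacao l-isotrpoicas lambda=0} as the essential input. Since $x$ has no umbilic points and nonvanishing principal curvatures, at least two principal curvatures must be distinct at each point: if all $n$ principal curvatures coincided at some point, that point would be umbilic, contradicting the hypothesis. Hence the number of distinct principal curvatures is at least two everywhere, and the only way the desired conclusion could fail is for $x$ to have exactly two distinct nonvanishing principal curvatures.

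First I would record the lower bound: because $x$ is umbilic-free with nonzero principal curvatures, the number of distinct principal curvatures is $\geq 2$. Next I would rule out the value two. Assume for contradiction that $x$ has exactly two distinct nonvanishing principal curvatures. Since $x$ is L-isotropic, Proposition~\ref{observacao l-isotrpoicas lambda=0} applies verbatim and yields $\lambda = 0$, contradicting the standing assumption $\lambda > 0$. Therefore the number of distinct principal curvatures is neither one nor two, so it is at least three, which is precisely the claim; the nonvanishing of these curvatures is built into the definition of an L-isotropic hypersurface.

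The only point requiring a little care is the implicit hypothesis behind Proposition~\ref{observacao l-isotrpoicas lambda=0}: through Song's Proposition~\ref{propo. do Song duas ki} it presumes that the two distinct principal curvatures have well-defined constant multiplicities $m$ and $n-m$. On a connected $M^n$ this is automatic once the number of distinct principal curvatures is locally constant and equal to two, and in that situation Song's result makes the Laguerre principal curvatures $b_i$ constant, so $\nabla\mathbb{B}=0$ and Corollary~\ref{corolario labda=0 segunda paralela} forces $\lambda=0$. I do not anticipate any genuine obstacle here: all of the analytic content is already packaged inside Proposition~\ref{observacao l-isotrpoicas lambda=0} and the corollary beneath it, so the statement follows as a direct logical consequence of these together with the $\lambda>0$ hypothesis.
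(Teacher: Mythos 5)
Your argument is correct and is exactly the one the paper intends: the corollary is stated as an immediate consequence of Proposition \ref{observacao l-isotrpoicas lambda=0} (umbilic-freeness rules out one distinct curvature, and that proposition rules out exactly two when $\lambda>0$), with no separate proof given in the text. Your extra remark about the constant multiplicities needed for Song's Proposition \ref{propo. do Song duas ki} is a reasonable point of care but does not change the argument.
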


In what follows, we will prove that
the function $\rho^2$ is bounded above by $\frac{1}{2\lambda}$, where $\lambda>0$ are the eigenvalues of the Laguerre tensor $\mathbb{L}$.
More precisely,

\begin{proposition}\label{proposição limitacao do lambda}
Let $x:M^n\rightarrow\mathbb{R}^{n+1}$ be a 
hypersurface without umbilic points, with nonvanishing principal curvatures. If $x$ is
L-isotropic with $\lambda > 0$, then
\begin{equation}\label{expressao de limitacao do lambra}
    0 < \rho^2 < \dfrac{1}{2\lambda},
\end{equation}
where $\rho$ is the positive function defined in terms of the 
principal curvatures and the curvature radii of $x$, given by
\eqref{estou com fomeeee}.
\end{proposition}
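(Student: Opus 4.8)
The plan is to reduce the inequality to a first-order differential relation for the single function $\rho$ and then read the bound off from the non-negativity of a gradient. The starting observation is that $\rho$ is itself a frame component of the Laguerre position vector: from \eqref{definicao da Y}, $\rho$ is the last coordinate of $Y$, so $\rho=\langle Y,A\rangle$ for the fixed vector $A=(0,0,\vec{0},-1)\in\mathbb{R}^{n+4}_2$, and hence every derivative of $\rho$ can be computed by differentiating $Y$ in the Laguerre moving frame. First I would record what L-isotropy does to the structure equations \eqref{equacoes de estrutura menos gerais}: since $\mathbb{C}=0$ and $L_{ij}=\lambda\delta_{ij}$, one gets $E_i(N)=\lambda E_i(Y)$, and tracing the second equation together with $\sum_i B_{ii}=0$ and $\sum_i L_{ii}=-\tfrac1{2n}\langle\Delta_gY,\Delta_gY\rangle$ from \eqref{identidades que os B satisfazem} yields $\Delta_g Y=n\lambda Y+nN$ and $\langle\Delta_gY,\Delta_gY\rangle=-2n^2\lambda$, in accordance with \eqref{definicao do N}.

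Next I would produce the first integral. Differentiating $\langle N,A\rangle$ and using $E_i(N)=\lambda E_i(Y)$ gives $E_i\langle N,A\rangle=\lambda\,E_i\rho$, so on the connected manifold $M$ there is a constant $C$ with $\langle N,A\rangle=\lambda\rho+\tfrac{C}{2}$. Expanding $A$ in the frame $\{Y,N,E_i(Y),\eta,P\}$ and using the orthogonality relations \eqref{relacoes de ortogonalidade para Y} (its $Y$-component comes from $\langle A,N\rangle$, its $N$-component from $\langle A,Y\rangle=\rho$, and its tangential components from $\langle A,E_i(Y)\rangle=E_i\rho$), the fact that $\langle A,A\rangle$ is a fixed number collapses to a pointwise identity of the shape
\begin{equation*}
|\nabla_g\rho|^2=1-2\lambda\rho^2-C\rho .
\end{equation*}
The delicate point here is the sign of the $\rho^2$-term, which is dictated by the Lorentzian signature of $\mathbb{R}^{n+4}_2$ (the causal characters of $A$ and $N$); I would check this carefully, because it is precisely the negative coefficient $-2\lambda$ that confines $\rho$ to a bounded interval and so produces an upper — rather than a lower — bound.

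Granting this identity, the estimate follows in principle. Since $|\nabla_g\rho|^2\ge0$ and $\lambda>0$, the quadratic $1-2\lambda\rho^2-C\rho$ must be $\ge0$, so $\rho$ lies between the roots of $2\lambda\rho^2+C\rho-1=0$; in particular $\rho\le\rho_+:=\frac{-C+\sqrt{C^2+8\lambda}}{4\lambda}$ at every point, with no compactness hypothesis needed. A direct computation gives $\rho_+\le\frac1{\sqrt{2\lambda}}$ if and only if $C\ge0$, so the whole argument hinges on showing $C\ge0$. This is the main obstacle, and I expect to settle it by the maximum principle for $\rho$ (equivalently, by evaluating $\langle N,A\rangle$ at a point where $\rho$ is extremal, where the tangential components of $A$ vanish and $\langle A,A\rangle$ pins $\langle N,A\rangle$ to a definite sign). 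Finally, to obtain the strict inequality I would exclude equality: if $\rho^2\equiv\frac1{2\lambda}$ then $\rho$ is constant, and then \eqref{expressao dos LIJ na observacao} forces $L_{ij}=-\frac1{2\rho^2}\delta_{ij}$, i.e. $\lambda=-\frac1{2\rho^2}<0$, contradicting $\lambda\ge0$ from Proposition \ref{lambra maior ou igual a zero}; the borderline case of an isolated maximum with value $\frac1{2\lambda}$ is then ruled out by combining $\nabla_g\rho=0$ there with the sign of $\Delta_g\rho$. Thus the two genuinely substantive steps are establishing the sign of $C$ and excluding the equality case, while the remainder is bookkeeping with the structure equations.
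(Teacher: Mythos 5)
Your route is genuinely different from the paper's: the paper never expands a constant vector in the Laguerre frame, but instead extracts $\Delta_{III}\log\rho=2n\lambda\rho^2$ from the $(n+4)$-th coordinate of $\Delta_gY=2n\lambda Y+n\alpha$ and combines it with the trace of the explicit formula \eqref{expressao dos LIJ na observacao} for $L_{ij}$, in which the gradient term carries the positive coefficient $\frac{n-2}{2}$ and the $\rho^2$-term the coefficient $-n\lambda$; that sign pattern is what produces an \emph{upper} bound. The point you yourself flagged as delicate is exactly where your argument breaks. Carrying out your expansion, write $A=aY+bN+\sum_ic_iE_i(Y)+d\eta+eP$; the relations \eqref{relacoes de ortogonalidade para Y} together with $\langle A,Y\rangle=\rho$, $\langle A,E_i(Y)\rangle=E_i(\rho)$ and $\langle A,P\rangle=0$ give $a=-\langle A,N\rangle$, $b=-\rho$, $c_i=E_i(\rho)$, $d=0$, and therefore
\begin{equation*}
-1=\langle A,A\rangle=-2ab+\sum_ic_i^2=-2\rho\,\langle A,N\rangle+|\nabla_g\rho|^2,
\end{equation*}
i.e. $|\nabla_g\rho|^2=2\lambda\rho^2+C\rho-1$ with $C=2\langle A,\alpha\rangle$ constant --- the \emph{negative} of the identity you propose. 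You can test this against the paper's own example: in the proof of the main theorem one has $\lambda=0$, $N=(0,0,1,\vec{0},1)$, hence $\langle A,N\rangle=1$, while $\rho=\frac12+\sum_kv_k^2$ and $g_{ii}=2$ in the $v$-coordinates, so $|\nabla_g\rho|^2=2\sum_kv_k^2=2\rho\langle A,N\rangle-1$, not $1-2\rho\langle A,N\rangle$. With the correct sign, $|\nabla_g\rho|^2\ge0$ reads $2\lambda\rho^2+C\rho\ge1$; since the roots of $2\lambda t^2+Ct-1$ have product $-1/(2\lambda)<0$, the admissible positive values of $\rho$ form a half-line $[\rho_+,\infty)$. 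This is a \emph{lower} bound on $\rho$, and no upper bound on $\rho^2$ can be extracted from it, so the method cannot deliver \eqref{expressao de limitacao do lambra}.

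Two further steps would also need real arguments even if the sign had come out your way: the claim $C\ge0$ is reduced to evaluating at an extremum of $\rho$, but $M^n$ is not assumed compact, so such a point need not exist; and your strictness argument only excludes $\rho^2$ being \emph{identically} equal to $\frac{1}{2\lambda}$, not attaining that value at an interior point. If you want to salvage the frame computation, the identity $|\nabla_g\rho|^2=2\lambda\rho^2+C\rho-1$ is not useless --- it is simply carrying different information from the paper's \eqref{gradiente de pho} --- but the upper bound has to come from an identity in which the $\rho^2$-term enters with a negative coefficient, which in the paper is supplied by the trace of \eqref{expressao dos LIJ na observacao} rather than by $\langle A,A\rangle$.
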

\begin{proof}
Let $x$ be an L-isotropic hypersurface.
From the first equation of \eqref{equacoes de estrutura menos gerais}, we have that,

\begin{equation}\label{N = lambda Y}
    N = \lambda Y + \alpha,
\end{equation}
where $\alpha\in\mathbb{R}^{n+4}_2$ is a constant.
From the orthogonality relations given by
\eqref{relacoes de ortogonalidade para Y},
we obtain properties of the constant 
$\alpha$. Since $\langle N,Y \rangle =-1$, $\langle Y, Y\rangle = 0$ and $\langle N, P\rangle = 0$, we have from \eqref{N = lambda Y} that,
\begin{equation}
 \langle \alpha, Y \rangle = -1,\, \, \, \langle \alpha, \alpha\rangle= 2\lambda,\, \, \, \langle \alpha,P\rangle =0.    
\end{equation} 
Up to a Laguerre transformation, we may consider $\alpha
=(0,0,\sqrt{2\lambda},\vec{0},0)$, $\vec{0}\in\mathbb{R}^n$.

Using the relation \eqref{identidades que os B satisfazem} of the tensor $\mathbb{L}$, we have,
\begin{eqnarray}\label{traço da L com a hip.}
 &&\lambda = -\dfrac{1}{2n^2}\langle 
\Delta_{g}Y, \Delta_{g}Y \rangle.
\end{eqnarray}
From the properties \eqref{definicao do N}, \eqref{N = lambda Y} and \eqref{traço da L com a hip.} we obtain
\begin{eqnarray}\label{laplaciano do Y na correcao....}
     \Delta_{g} Y = 2n\lambda Y + n\alpha.
\end{eqnarray}
The relation \eqref{laplaciano do Y na correcao....} was originally proved by Li, T.; Li, H. and Wang, C. in \cite{isotropicas}.

In what follows,  
we will obtain $\Delta_{g}Y$.  
Considering a geodesic frame at a point $p$, by \eqref{relacao entre as bases de g e III} and \eqref{relacao entre Y e y} we have,  

\begin{eqnarray}\label{expressao do laplaciano de Y na terceira}\nonumber
&&\Delta_{g}Y = \sum_{t}E_{t}\Big(E_{t}(Y)\Big) = \sum_{t}\Big[\rho^{-1}r_te_t\Big(\rho^{-1}r_te_t(\rho y)\Big)\Big]\\\nonumber
&&=\rho^{-1}\Bigg\{ \sum_{t}\Bigg[ \Big( r_te_t(r_t) e_t(log\rho) + r_t^2e_t(e_t(log\rho))\Big)y + 
\Big(r_te_t(r_t)e_t(y)+ r_t^2e_t(e_t(y))\Big)+\\ \nonumber
&&+ r_t e_t(log\rho)r_te_t(y)\Bigg]\Bigg\} \\ &&=\rho^{-1}y\Delta_{III}(log\rho) + 
\rho^{-1}\Delta_{III}y + 
\rho^{-1}\nabla_{III}(log\rho)\nabla_{III}y.
\end{eqnarray}
Substituting \eqref{laplaciano do Y na correcao....} into 
\eqref{expressao do laplaciano de Y na terceira} and
using \eqref{relacao entre Y e y} we obtain,

\begin{eqnarray}\label{expressao y na terceira}
&& 2n\lambda\rho^2 y + n\rho\alpha = \Delta_{III}(log\rho)y + \Delta_{III}y + 
\nabla_{III}(log\rho)\nabla_{III}y.
\end{eqnarray}

Since $y, \alpha \in \mathbb{R}^{n+4}_2$, we can consider each coordinate of the expression 
\eqref{expressao y na terceira}, i.e.,
\begin{equation}\label{expressao em coordenadas da III}
    2n\lambda \rho^2 y^{l}+ n\rho\alpha^{l} = \Delta_{III}(log\rho)y^{l} + \Delta_{III}y^{l} + 
\nabla_{III}(log\rho)\nabla_{III}y^{l},\, \, \, \, l=1,\ldots, (n+4).
\end{equation}
By considering the coordinate $l = (n+4)$, it follows from the definition of $y$ that $y^{n+4} = 1$.  
Moreover, by the choice of $\alpha$, $\alpha^{l+4} = 0$.  
Then, it follows from \eqref{expressao em coordenadas da III} that

\begin{equation}\label{expressao em coordenadas da III para l=n+4}
     \Delta_{III}(log\rho) = 2n\lambda \rho^2.
\end{equation}

Now, considering the trace of  
\eqref{expressao dos LIJ na observacao}, it follows that,

\begin{eqnarray}\label{traço da expressao Lij com isotropica}
&&\Delta_{III}(log\rho) + \dfrac{(n-2)}{2}  
\vert \nabla_{III}log\rho\vert^2 - \dfrac{n}{2}   = n\lambda\rho^{2}.
\end{eqnarray}
Substituting \eqref{expressao em coordenadas da III para l=n+4} into \eqref{traço da expressao Lij com isotropica}, we have

\begin{eqnarray}\label{gradiente de pho}
&&\dfrac{(n-2)}{2} \vert \nabla_{III}log\rho\vert^2
= -n\lambda\rho^{2} +\dfrac{n}{2}.
\end{eqnarray}
Therefore, using Corollary \ref{mais uma observacao sobre lambra positivo}, it follows from \eqref{gradiente de pho} that equation 
\eqref{expressao de limitacao do lambra} is satisfied.

\end{proof}
The expression \eqref{bij,k = 2nlambda...} of Proposition \ref{lambra maior ou igual a zero} shows that L-isotropic hypersurfaces may exist in $\mathbb{R}^{n+1}$ only if $\lambda \geq 0$. A natural question that arises is: are there hypersurfaces belonging to this class? For the case $\lambda = 0$, one example is known. It was first obtained in 1999 by Corro, Ferreira, and Tenenblat \cite{artigo do exemplo}, where the authors applied Ribaucour transformations to the hyperplane to obtain a family of Dupin hypersurfaces in $\mathbb{R}^{n+1}$ parameterized by lines of curvature with n distinct principal curvatures. Although obtained in a different context, this example has been considered within the framework of Laguerre geometry.

With the objective of presenting the example mentioned above, we will introduce some definitions. For more details, see \cite{Li pequeno}.
Let $\mathbb{R}^{n+2}_{1}$ be the Minkowski space with the inner product
\begin{eqnarray}\label{imersao de laguerre produto intero}\nonumber
\langle X, Y\rangle = X_1Y_1+ \cdots +  X_{n+1}Y_{n+1} -  X_{n+2}Y_{n+2}.
\end{eqnarray}
Let $\nu= (1,\vec{0},1)$ be the lightlike vector in $\mathbb{R}^{n+2}_{1}$ with $\vec{0}\in\mathbb{R}^{n}$.
Consider $\mathbb{R}^{n+1}_{0}$ the degenerate hyperplane in $\mathbb{R}^{n+2}_{1}$ defined by 
$\mathbb{R}^{n+1}_{0} = \{x\in\mathbb{R}^{n+2}_{1}\, ;\,  \langle x,\nu\rangle =0\}$.
We define the bundle $U\mathbb{R}^{n+1}_0$ by
\begin{eqnarray}\label{imersao de Laguerre relacao do UR0}\nonumber
U\mathbb{R}^{n+1}_{0} = \{(x,\xi)\in\mathbb{R}^{n+2}_{1}\times\mathbb{R}^{n+2}_{1}\,  ;\,  
\langle x,\nu\rangle =0, \langle \xi,\xi\rangle =0, \langle \xi,\nu\rangle = 1 \}.
\end{eqnarray}
Note that, $\langle \xi,\nu\rangle = 1$
implies $\xi_1\neq 0$.

We define the \textit{Laguerre immersion} $\tau: U\mathbb{R}^{n+1}_{0}\rightarrow U\mathbb{R}^{n+1}$ 
given by $\tau(x,\xi)= (x',\xi')\in U\mathbb{R}^{n+1}$,
where $x = 
(x_1,\vec{x_0},x_1)\in\mathbb{R}\times\mathbb{R}^{n}\times\mathbb{R}$,
$\xi=(\xi_1 + 1,\vec{\xi_0},\xi_1)\in\mathbb{R}\times\mathbb{R}^{n}\times\mathbb{R}$
and
\begin{eqnarray}\label{imersao de Laguerre: relacao x e x'}
x' = \left(-\dfrac{x_1}{\xi_1}, \vec{x_0} - \dfrac{x_1}{\xi_1}\vec{\xi_0}\right),\, \, \, \,  
\xi' = \left(1 + \dfrac{1}{\xi_1}, \dfrac{\vec{\xi_0}}{\xi_1}\right).
\end{eqnarray}
Note that $\xi'$ is normal to $x'$. 
Let $x: M^n\rightarrow\mathbb{R}^{n+1}_{0}$ be a spacelike oriented hypersurface in the degenerate space $\mathbb{R}^{n+1}_{0}$ and let
$\xi$ be the unique vector in $\mathbb{R}^{n+2}_1$ satisfying
$\langle \xi, dx \rangle = 0,\, \, \langle \xi, \xi \rangle = 0\, \, \, \,  
\text{and}\, \, \, \,  \langle \xi, \nu \rangle = 1,$
where $\nu=(1,\vec{0},1)\in\mathbb{R}^{n+2}_1$ is the lightlike vector with $\vec{0}\in\mathbb{R}^{n}$. Consider $\{e_1,\ldots,e_n \}$ an orthonormal basis for $TM$ with respect to 
$\langle dx,dx\rangle$ such that $e_i(\xi) = -k_ie_i(x),\, \, 1\leq i\leq n.$
Assuming that $k_i\neq 0$,
we define $r_i=\dfrac{1}{k_i}$ as a curvature radius of $x$ and
$r=\dfrac{\sum_{i=1}^n r_i}{n}$ as the mean curvature radius of $x$.
We can relate the immersion $x$ with the corresponding hypersurface
$x': M^n\rightarrow\mathbb{R}^{n+1}$
as follows. Given $x: M^n\rightarrow\mathbb{R}^{n+1}_{0}$,
using the Laguerre immersion $\tau$
we obtain a hypersurface
$x': M^n\rightarrow\mathbb{R}^{n+1}$, where $x'$ and $\xi'$ are given by
(\ref{imersao de Laguerre: relacao x e x'}). 
Next, we present the example of an L-isotropic hypersurface.

\vspace{0.2in}

\noindent \begin{example}(\cite{artigo do exemplo})\label{exemplo da isotropica lambda =0}
\rm For any integers $m_1,m_2,\ldots, m_s$, with
$m_1+m_2+\cdots + m_s = n$ ($s\geq 2$) and distinct nonzero constants $a_1, a_2, \ldots, a_s$,
we define
$x:\mathbb{R}^{n}\rightarrow\mathbb{R}^{n+1}{0}$ as a spacelike oriented hypersurface in
$\mathbb{R}^{n+1}_{0}$ given by 
\begin{eqnarray*} 
x = \left(\dfrac{1}{2}\displaystyle\sum_{i=1}^sa_i\vert u_i\vert^2, u_1, \ldots, u_s, 
\dfrac{1}{2}\displaystyle\sum_{i=1}^sa_i\vert u_i\vert^2 \right),
\end{eqnarray*}
where $(u_1, u_2,\ldots, u_s)\in\mathbb{R}^{m_1}\times\mathbb{R}^{m_2}\times\cdots\times\mathbb{R}^{m_s}
=\mathbb{R}^{n}$, $\vert u_i\vert^2 = u_i\cdot u_i$, $i=1, \ldots, s$ and
\begin{eqnarray}\label{componentes da x}
\begin{array}{lc}
x_0 = (u_1,\ldots, u_s),\, \, \, \, \, \, \, \,  x_1 = 
\dfrac{1}{2}\sum_{i=1}^sa_i\vert u_i\vert^2.
\end{array}
\end{eqnarray}
The unit normal vector field $\xi$ of $x$ is given by, 
\begin{eqnarray*} 
\xi = -\left(\dfrac{1}{2}\displaystyle\sum_{i=1}^s\vert a_i u_i\vert^2 + 1, a_1 u_1,\ldots,a_s u_s, 
\dfrac{1}{2}\displaystyle\sum_{i=1}^s\vert a_i u_i\vert^2 -1 \right),
\end{eqnarray*}
where
\begin{eqnarray}\label{componentes da xi}
\begin{array}{lc}
\xi_0 = (- a_1 u_1,\ldots,-a_s u_s),\, \, \, \, \, \, \, \, \xi_1 = 
\dfrac{-1}{2}\sum_{i=1}^s\vert a_i u_i\vert^2 -1.
\end{array}
\end{eqnarray}
We have,
\begin{eqnarray*}
dx = \left(\sum_{i=1}^s a_i u_idu_i
\, \, ,\, \, 
du_1, \cdots, du_s\, \, ,\, \,  
\sum_{i=1}^s a_i u_idu_i\right),\, \, \, \, d\xi = -
\left(\sum_{i=1}^s a_i^2 u_idu_i, a_1 du_1, \cdots, a_s du_s, 
\sum_{i=1}^s a_i^2 u_idu_i \right).
\end{eqnarray*} 
The first and second fundamental forms of $x$
are, respectively, given by
\begin{eqnarray*}
I = dx\cdot dx = \displaystyle\sum_{i=1}^s du_idu_i,\, \, \, \, \, 
II = -dx\cdot d\xi =  \displaystyle\sum_{i=1}^s a_i du_idu_i.
\end{eqnarray*}
Note that $x$ is parametrized by curvature lines and its principal curvatures are
$a_1, \ldots, a_s$ with multiplicities $m_1, m_2, \ldots,m_s$. 
Hence, $r_i = \dfrac{1}{a_i},\,\, \, \, r=\sum_{i=1}^s \dfrac{m_i}{na_i}.$
Thus, we obtain that the invariants of $x$ are given by
\begin{eqnarray*}
\rho^2 &=& \sum_{i}m_i(r_i - r)^2 = c_1,\\
C_i &=& -\rho^{-2}r_i\{e_i(r) - e_{i}(log\rho)(r - r_i) \} =0,\, \, \, \,  
i, j = 1,\ldots, n, \\
L_{ij} &=& \rho^{-2}\{\text{Hess}_{ij}(log\rho) - r_ir_je_i(log\rho)e_j(log\rho) + 
\dfrac{1}{2}\vert\nabla log\rho\vert^{2}\delta_{ij}\} = 0,\, \, \, \,  
i, j = 1,\ldots, n, \\
B_{ij} &=& b_i\delta_{ij},\, \, \, \,  
i, j = 1,\ldots, n, 
\end{eqnarray*}
where
\begin{equation}
b_i = \dfrac{r - r_i}{\sqrt{\sum_{j}m_j(r_j-r)^2}} = c_2,
\end{equation}
with $c_1,c_2\in\mathbb{R}$ are constants.
Therefore, we conclude that $x$ is an L-isotropic hypersurface
with $\lambda=0$.
\end{example}

\vspace{0.2in}

Considering the Laguerre immersion $\tau: U\mathbb{R}^{n+1}_{0}\rightarrow U\mathbb{R}^{n+1}$ we will obtain 
the hypersurface $x':\mathbb{R}^{n}\rightarrow\mathbb{R}^{n+1}$
defined by (\ref{imersao de Laguerre: relacao x e x'}),
corresponding to the hypersurface
$x$ described in the previous example.
From (\ref{imersao de Laguerre: relacao x e x'}), (\ref{componentes da x})
and (\ref{componentes da xi}) we obtain the L-isotropic hypersurface in Euclidean space
$x':\mathbb{R}^{n}\rightarrow\mathbb{R}^{n+1}$, given by
\begin{equation}\label{definicao da HILF}
x' = (0, u_1,\ldots, u_s) + \dfrac{\sum_{i=1}^{s}a_i\vert u_i\vert ^2}
{\sum_{i=1}^{s}a_i^{2}\vert u_i\vert ^2 + 1}(1, -a_1u_1, \ldots, -a_su_s),
\end{equation}   
and its unit normal $\xi':\mathbb{R}^{n}\rightarrow\mathbb{S}^{n}$ 
\begin{equation}\label{normal da xi linha exemplo}
\xi' = \dfrac{1}{\sum_{i=1}^{s}a_i^{2}\vert u_i\vert ^2 + 1}
\left(\sum_{i=1}^{s}a_i^{2}\vert u_i\vert ^2 - 1,2a_1u_1, \ldots, 2a_su_s  \right).
\end{equation}  
The example given in \eqref{definicao da HILF} 
was first obtained by Corro, Ferreira, and Tenenblat \cite{artigo do exemplo} in 1999.

A natural question is whether the family of hypersurfaces described in 
\eqref{definicao da HILF} is the only 
L-isotropic one, when $\lambda = 0$. 
A partial answer to this question will be presented in Theorem 11. Namelly, we will show that the answer is positive when the hypersurface admits a parametrization by lines of curvature.

To conclude this section, we introduce the following
definition of an L-isoparametric hypersurface
in $\mathbb{R}^{n+1}$. 
Subsequently, we will investigate
the classification of hypersurfaces 
that possess both properties simultaneously, 
being L-isotropic and L-isoparametric.

\begin{definition}
\rm
Let $x: M^n \rightarrow \mathbb{R}^{n+1}$ be a hypersurface without umbilic points and with nonvanishing principal curvatures. We say that $x$ is an \textit{L-isoparametric hypersurface} if the Laguerre form $\mathbb{C}$ is zero and the Laguerre principal curvatures, given in 
(\ref{curvaturas principais de Laguerre}), are all constant.
\end{definition}

Motivated by the work of Shu \cite{isop em Rn}, a natural question is to determine which hypersurfaces lie in the intersection of the classes of L-isotropic and L-isoparametric hypersurfaces.  
A hypersurface in $\mathbb{R}^{n+1}$ that has the property of being both L-isoparametric and L-isotropic simultaneously is given by Example \ref{exemplo da isotropica lambda =0}, where in this case $\lambda = 0$.  
We posed a new question: when $\lambda = 0$, is this example the only one in this intersection? The positve answer is given given in Proposition 7.

\begin{lemma}\label{LEMA IGC}(\cite{Li grande})
Consider $x: M^n \rightarrow \mathbb{R}^{n+1}$,  
$n \geq 3$, a hypersurface without umbilic points, with distinct nonzero principal curvatures.  
Suppose that $x$ is L-isoparametric.  
Then for every $i = 1, \ldots, n$, the following identity holds

\begin{equation}\label{IGC para tensor Bezinho}
    \displaystyle\sum_{j\notin [i]} 
    \dfrac{R_{ijij}}{(b_i-b_j)} =0,
\end{equation}
where $R_{ijij}$ is the sectional curvature on $M^n$  
with respect to the Laguerre metric $g$,  
$b_i$ are the Laguerre principal curvatures, and  
$[i]=\{l\, \, \, ;\,\, \, \, b_i = b_l\}$.
\end{lemma}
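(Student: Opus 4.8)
The plan is to prove this as the Laguerre analogue of Cartan's fundamental identity for isoparametric hypersurfaces, working throughout in the orthonormal frame $\{E_1,\dots,E_n\}$ that diagonalizes $\mathbb{B}$, so that $B_{ij}=b_i\delta_{ij}$ as in \eqref{expressao dos B e C em termos dos ei}. First I would unpack the L-isoparametric hypothesis into two facts: the Laguerre form vanishes, so $C_j=0$, and the Laguerre principal curvatures $b_i$ of \eqref{curvaturas principais de Laguerre} are constant. Feeding $C_j=0$ into the third relation of \eqref{relacao entre os invariantes L, B e C} yields the Codazzi-type symmetry $B_{ij,k}=B_{ik,j}$ for all indices.

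Next I would compute the covariant derivative $B_{ij,k}$ explicitly in this frame. Since $B_{ij}=b_i\delta_{ij}$ with the $b_i$ constant, only the connection terms survive and one gets $B_{ij,k}=(b_i-b_j)\,\omega_{ij}(E_k)$, where $\omega_{ij}$ are the connection $1$-forms of the Laguerre metric $g$ (recall $\Gamma^{k}_{ij}=\omega_{ik}(E_j)$ from \eqref{equacoes de estrutura menos gerais}). Combining the symmetry $B_{ij}=B_{ji}$ with the Codazzi symmetry shows that the three-index object $(b_i-b_j)\,\omega_{ij}(E_k)$ is totally symmetric in $i,j,k$. Two consequences are decisive: taking $k=i$ or $k=j$ gives $B_{ij,i}=B_{ii,j}=0$ and $B_{ij,j}=B_{jj,i}=0$, so that $\omega_{ij}(E_i)=\omega_{ij}(E_j)=0$ whenever $b_i\neq b_j$; and for three indices with pairwise distinct Laguerre curvatures one obtains the chain $(b_i-b_j)\,\omega_{ij}(E_k)=(b_i-b_k)\,\omega_{ik}(E_j)=(b_j-b_k)\,\omega_{jk}(E_i)$.

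With these relations in place, I would invoke the second structure (Gauss) equation to write the sectional curvature in terms of the connection coefficients, which up to the curvature sign convention takes the form
\[
R_{ijij}=E_i\!\big(\omega_{ij}(E_j)\big)-E_j\!\big(\omega_{ij}(E_i)\big)-\omega_{ij}([E_i,E_j])-\sum_k\big(\omega_{ik}(E_i)\,\omega_{kj}(E_j)-\omega_{ik}(E_j)\,\omega_{kj}(E_i)\big).
\]
Because $\omega_{ij}(E_i)=\omega_{ij}(E_j)=0$ for $j\notin[i]$, the two first-derivative terms drop out and $R_{ijij}$ reduces to purely quadratic expressions in the connection coefficients, summed over a third index $k$ (with $[E_i,E_j]$ itself expanded through the same $\omega$'s). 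I would then divide by $(b_i-b_j)$ and sum over $j\notin[i]$; in the present hypothesis the $k_i$, hence the $r_i$ and the $b_i$, are all distinct, so $[i]=\{i\}$ and the sum runs over all $j\neq i$.

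The main obstacle I anticipate is precisely this final bookkeeping: showing that, after division by $(b_i-b_j)$ and summation, the quadratic connection terms cancel in pairs. The heart of the argument is to substitute the chain relation $(b_i-b_j)\,\omega_{ij}(E_k)=(b_i-b_k)\,\omega_{ik}(E_j)$ so that each term $\frac{\omega_{ik}(E_j)\,\omega_{kj}(E_i)}{b_i-b_j}$ matches, up to sign, its image under the interchange $j\leftrightarrow k$, using $\omega_{kj}=-\omega_{jk}$; careful tracking of signs and of the excluded diagonal contributions (where $b_j=b_i$) then forces the total to vanish. The hypothesis $n\geq 3$ is exactly what guarantees the existence of the auxiliary third index $k$ needed for these chain relations, which is why the identity is meaningful only in that range.
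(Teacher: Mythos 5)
The paper gives no proof of Lemma \ref{LEMA IGC}: it is quoted from \cite{Li grande}, so there is no internal argument to compare against. Your proposal is the classical proof of Cartan's fundamental identity transplanted to the Laguerre invariants, and it is correct. The preparatory steps are exactly right: $\mathbb{C}=0$ in the third relation of \eqref{relacao entre os invariantes L, B e C} gives $B_{ij,k}=B_{ik,j}$; constancy of the $b_i$ turns the covariant derivative of $B_{ij}=b_i\delta_{ij}$ into $B_{ij,k}=(b_i-b_j)\,\omega_{ij}(E_k)$; and total symmetry of this three-index object yields both $\omega_{ij}(E_i)=\omega_{ij}(E_j)=0$ for $b_i\neq b_j$ and the chain relation $T_{ijk}:=(b_i-b_j)\omega_{ij}(E_k)=(b_i-b_k)\omega_{ik}(E_j)=(b_j-b_k)\omega_{jk}(E_i)$. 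Since all principal curvatures are distinct, so are the $b_i$, and $[i]=\{i\}$ as you note. The only place you hedge is the final cancellation, and it does go through: writing $\gamma_{ijk}=\omega_{ij}(E_k)$ and using $\gamma_{iji}=\gamma_{ijj}=0$, the structure equation reduces the sectional curvature to
\begin{equation*}
R_{ijij}=\pm\sum_{k\neq i,j}\Bigl[\gamma_{ijk}\gamma_{ikj}-\gamma_{ijk}\gamma_{jki}-\gamma_{ikj}\gamma_{jki}\Bigr],
\end{equation*}
and substituting $\gamma_{ijk}=T_{ijk}/(b_i-b_j)$, $\gamma_{ikj}=T_{ijk}/(b_i-b_k)$, $\gamma_{jki}=T_{ijk}/(b_j-b_k)$ with $u=b_i-b_j$, $v=b_i-b_k$, $w=b_j-b_k=v-u$ turns each summand of $\sum_{j\neq i}R_{ijij}/(b_i-b_j)$ into $\mp\,2\,T_{ijk}^2/(uvw)$. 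The interchange $j\leftrightarrow k$ swaps $u$ and $v$, sends $w$ to $-w$, and fixes $T_{ijk}$, so the ordered double sum over $(j,k)$ cancels in pairs and the identity follows; the overall curvature sign convention is irrelevant since the conclusion is that the sum vanishes. Your observation that $n\geq 3$ is what supplies the auxiliary third index is also accurate. In short, the proposal is a complete and correct replacement for the proof the paper outsources to \cite{Li grande}.
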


\begin{proposition}\label{isop mais isoto}
Let $x:M^n \rightarrow \mathbb{R}^{n+1}$,  
$n \geq 3$, be a hypersurface without umbilic points  
with distinct nonvanishing principal curvatures.  
If $x$ is both L-isotropic and L-isoparametric,  
then the eigenvalues of the tensor $\mathbb{L}$ are all zero,  
i.e., $\lambda = 0$.
\end{proposition}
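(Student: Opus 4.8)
The plan is to combine the two hypotheses through their joint effect on the curvature of the Laguerre metric $g$. The L-isotropic assumption pins down the curvature tensor completely via Proposition \ref{lambra maior ou igual a zero}, while the L-isoparametric assumption feeds that curvature into the trace-type identity of Lemma \ref{LEMA IGC}; comparing the two will force $\lambda = 0$.

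First I would record the consequences of each hypothesis separately. Since $x$ is L-isotropic, Proposition \ref{lambra maior ou igual a zero} gives $R_{ijkl} = 2\lambda(\delta_{jk}\delta_{il} - \delta_{ik}\delta_{jl})$, so that for every pair $i \neq j$ the sectional curvature is the \emph{constant} $R_{ijij} = -2\lambda$. On the other hand, because the principal curvatures $k_i$ are pairwise distinct, the radii $r_i = 1/k_i$ are pairwise distinct, and hence so are the Laguerre principal curvatures $b_i = \rho^{-1}(r - r_i)$, since they differ from the $r_i$ only by the common affine normalization and the common nonzero factor $\rho^{-1}$. Consequently each class $[i] = \{l : b_l = b_i\}$ reduces to the singleton $\{i\}$, and the sum in Lemma \ref{LEMA IGC} runs over all $j \neq i$.

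Next I would substitute into Lemma \ref{LEMA IGC}, which applies because $x$ is L-isoparametric. With $[i] = \{i\}$ and $R_{ijij} = -2\lambda$, the identity reads, for each fixed $i$,
\begin{equation*}
\sum_{j \neq i} \frac{R_{ijij}}{b_i - b_j} = -2\lambda \sum_{j \neq i}\frac{1}{b_i - b_j} = 0.
\end{equation*}
To conclude, let $i_0$ be an index realizing $b_{i_0} = \max_i b_i$; since the $b_i$ are distinct, $b_{i_0} - b_j > 0$ for every $j \neq i_0$, whence $\sum_{j \neq i_0}(b_{i_0} - b_j)^{-1} > 0$. Evaluating the displayed identity at $i = i_0$ then forces $\lambda = 0$, which is the claim. (Equivalently, the vanishing of $\sum_{j\neq i}(b_i - b_j)^{-1}$ for every $i$ would say, via $\sum_{j\neq i}(b_i-b_j)^{-1} = p''(b_i)/\bigl(2p'(b_i)\bigr)$ for $p(t) = \prod_j(t - b_j)$, that $p''$ vanishes at all $n$ distinct roots of $p$; this is impossible because $\deg p'' = n - 2 < n$ for $n \geq 3$.)

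The only delicate point, and the step I expect to require the most care, is the passage from the abstract identity of Lemma \ref{LEMA IGC} to a genuine contradiction: a priori the identity only yields $\lambda\sum_{j\neq i}(b_i - b_j)^{-1} = 0$ for each $i$, and one must exhibit at least one index for which the sum is provably nonzero. Selecting the index that realizes the maximal (or minimal) $b_i$ makes every summand of one sign and settles this cleanly. The supporting detail to verify carefully is that the distinctness of the $k_i$ genuinely forces the $b_i$ to be distinct, so that each $[i]$ is a singleton and the denominators $b_i - b_j$ never vanish.
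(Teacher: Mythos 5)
Your proposal is correct and follows essentially the same route as the paper: substitute the constant sectional curvature coming from the L-isotropic hypothesis (Proposition \ref{lambra maior ou igual a zero}) into the identity of Lemma \ref{LEMA IGC}, note that distinct principal curvatures force distinct Laguerre principal curvatures so each class $[i]$ is a singleton, and evaluate at the index realizing the maximal $b_i$ to get a sum of one sign, forcing $\lambda=0$. The only (immaterial) discrepancy is the sign of $R_{ijij}$ — you get $-2\lambda$ from the literal formula while the paper writes $+2\lambda$, reflecting a curvature-convention ambiguity that does not affect the conclusion.
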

\begin{proof}

Since  
$x$ is L-isotropic, it follows from \eqref{Rijkl da isotropica correcaoo.} that,
$R_{ijkl} = 2\lambda(\delta_{jk}\delta_{il} -\delta_{jl}\delta_{ik}),\, \, \, 
i,j,k,l=1,\ldots,n.$
Hence,
\begin{equation}\label{cansei 3}
    R_{ijij} = 2\lambda,\, \, \, \, \, \, \, \, \,i\neq j=1,\ldots,n.
\end{equation}
Now, since $x$ is L-isoparametric, using Lemma \ref{LEMA IGC} and 
\eqref{cansei 3} we obtain,

\begin{eqnarray}\label{cancei correcao dia 23.}
\displaystyle\sum_{j\notin [i]} 
    \dfrac{2\lambda}{(b_i-b_j)} =0,  \, \, \, \, \, \, \, \, \,i=1,\ldots,n. 
\end{eqnarray}
Since the Laguerre principal curvatures, given in
\eqref{curvaturas principais de Laguerre} 
are constant for all $i=1,\ldots,n$ and
there are no umbilical points, we can reorder the orthonormal frame
$\{E_1,\ldots,E_n \}$ such that
for some $i_{0}=1,\ldots,n$

\begin{equation}
    b_{i_{0}}>b_{j}, \, \, \, \, \, \, \, \, \, \forall j\neq i_0=1,\ldots,n. 
\end{equation}
Then, for this index $i_0$ we have that
$\sum_{j\notin [i]} 
    \dfrac{1}{(b_{i_{0}}-b_j)} > 0.$
Therefore, it follows from \eqref{cancei correcao dia 23.}
that $\lambda=0$.
\end{proof}

\section{L-isotropic hypersurfaces in
\texorpdfstring{$\mathbb{R}^{n+1}$}{Rn+1} parameterized by lines of curvature }

Consider an L-isotropic hypersurface  $M^n\subset\mathbb{R}^{n+1}$, $n\geq 3$, parametrized by lines of curvature with $n$ distinct, nonvanishing principal curvatures. From \eqref{relacao entre os invariantes L, B e C} we can choose an orthonormal basis 
$\{E_{1},E_{2},\ldots,E_{n}\}$ with respect to the Laguerre metric $g$ satisfying \eqref{tensores B e L comutando ingles..}.
Consider a coordinate system 
$(u_1,\ldots,u_n)$ around  $p\in M^n$ given by lines of curvature with respect to the Laguerre metric g. From now on, the index notation ``$,{i}$ '' denotes the partial derivative with respect to the coordinate $u_i$. Then
\begin{equation}\label{L metrica no sc}
    g\bigg(\dfrac{\partial}{\partial u_i},\dfrac{\partial}{\partial u_j}\bigg) = \bigg\langle dY\bigg(\dfrac{\partial}{\partial u_i}\bigg), dY\bigg(\dfrac{\partial}{\partial u_j}\bigg)\bigg\rangle = \langle Y_{,i} , Y_{,j} \rangle = \delta_{ij}g_{ii},
\end{equation}
In this coordinate system, the vector fields  $\{E_1, \ldots, E_n \}$ are given by 
\begin{equation}\label{campos Ei no sistema de coordenadas}
    E_i=\dfrac{1}{\sqrt{g_{ii}}}\dfrac{\partial}{\partial u_i},\, \, \, \, \,  i=1,\ldots,n.
\end{equation}

\begin{proposition}\label{lema 1 arrumando o sistema}
Consider $x:M^n\rightarrow\mathbb{R}^{n+1}$ an L-isotropic hypersurface with $n$ distinct, nonvanishing principal curvatures,
$Y:M^n\rightarrow C^{n+3}\subset
\mathbb{R}^{n+4}_2$ the Laguerre position vector of $x$ 
with the Laguerre metric $g=\langle dY,dY \rangle$ and
$\lambda$ the eigenvalue of the tensor $\mathbb{L}$.
Suppose that $M^n$ is connected and admits a parameterization by lines of curvature $(u_1,u_2,\ldots,u_n)\in 
U\subset\mathbb{R}^{n}$, where $U$ is open with respect to the Laguerre metric $g$. 
Then the following system of equations is satisfied by $N$, $Y$ and $\eta$, given in
\eqref{definicao da Y}, 
\eqref{definicao do N}, \eqref{aplicacao normal de laguerre}, respectively
\begin{equation}\label{sistema final em coordenadas}
\begin{cases}
N_{,i} = \lambda Y_{,i}, \\
\\
\dfrac{Y_{,ii}}{g_{ii}} - \dfrac{g_{ii,i}Y_{,i}}{2g_{ii}^2}  = \lambda Y + N + b_iP,  \\
\\
Y_{,ij} = \displaystyle\sum_{\substack{l=1,\\l \neq i \neq j}}^{n}\Gamma^{l}_{ij}\dfrac{\sqrt{g_{ii}}\sqrt{g_{jj}}}{\sqrt{g_{ll}}}Y_{,l},\, \, \, \forall i\neq j=1,\ldots, n,\\
\\
\eta_{,i} =b_{i}Y_{,i},
\end{cases}
\end{equation}
where $P=(1, -1, \Vec{0},0)$, $\Vec{0}\in\mathbb{R}^{n+1}$.
\end{proposition}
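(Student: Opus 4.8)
The plan is to obtain the four equations of \eqref{sistema final em coordenadas} by specializing the three structure equations \eqref{equacoes de estrutura menos gerais} to the L-isotropic case and rewriting every frame derivative in the coordinate basis. Since $x$ is L-isotropic I would work in the frame \eqref{tensores B e L comutando ingles..}, where $L_{ij}=\lambda\delta_{ij}$, $C_i=0$ and $B_{ij}=b_i\delta_{ij}$; moreover, by \eqref{campos Ei no sistema de coordenadas}, $E_i=g_{ii}^{-1/2}\,\partial/\partial u_i$, so that $E_i(Y)=g_{ii}^{-1/2}Y_{,i}$ and likewise for $N$ and $\eta$. The two first-order equations are then immediate: substituting $L_{ij}=\lambda\delta_{ij}$, $C_i=0$ into the first line of \eqref{equacoes de estrutura menos gerais} gives $E_i(N)=\lambda E_i(Y)$, and substituting $C_i=0$, $B_{ij}=b_i\delta_{ij}$ into the third line gives $E_i(\eta)=b_iE_i(Y)$; multiplying both by $\sqrt{g_{ii}}$ yields $N_{,i}=\lambda Y_{,i}$ and $\eta_{,i}=b_iY_{,i}$.

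Next I would treat the two second-order equations, which both come from the middle line of \eqref{equacoes de estrutura menos gerais}. A direct computation of $E_j(E_i(Y))=g_{jj}^{-1/2}\partial_j\bigl(g_{ii}^{-1/2}Y_{,i}\bigr)$ gives, for $i=j$, the left-hand side $g_{ii}^{-1}Y_{,ii}-\tfrac12 g_{ii}^{-2}g_{ii,i}Y_{,i}$, and for $i\neq j$ a combination of $Y_{,ij}$ and $Y_{,i}$. Equating these with the right-hand side $L_{ij}Y+\delta_{ij}N+\sum_k\Gamma^k_{ij}E_k(Y)+B_{ij}P$ and inserting the L-isotropic values produces the second and third equations of \eqref{sistema final em coordenadas}, provided the connection terms collapse as claimed. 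In orthogonal (line-of-curvature) coordinates the Levi-Civita coefficients $\Gamma^l_{ij}$ with $l,i,j$ pairwise distinct vanish automatically, so the $i\neq j$ identity already reduces to $Y_{,ij}=\tfrac{g_{ii,j}}{2g_{ii}}Y_{,i}+\tfrac{g_{jj,i}}{2g_{jj}}Y_{,j}$ together with the displayed $l\neq i,j$ sum, while the $i=j$ identity carries an extra term $\sum_{k\neq i}\Gamma^k_{ii}E_k(Y)$ with $\Gamma^k_{ii}=-\tfrac{g_{ii,k}}{2g_{ii}\sqrt{g_{kk}}}$.

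The hard part will be establishing that these residual connection terms vanish, i.e. that $g_{ii,j}=0$ for all $i\neq j$; this is exactly what is needed to kill both the $\sum_{k\neq i}\Gamma^k_{ii}E_k(Y)$ term in the $i=j$ equation and the $\tfrac{g_{ii,j}}{2g_{ii}}Y_{,i}+\tfrac{g_{jj,i}}{2g_{jj}}Y_{,j}$ part of the $i\neq j$ equation, and I expect it to be the main obstacle. To establish it I would exploit the L-isotropic relations from Proposition \ref{lambra maior ou igual a zero}: the Codazzi identity \eqref{bij,k = bik,j coreccao...} yields $b_{i,j}=-\tfrac{g_{ii,j}}{2g_{ii}}(b_i-b_j)$, tying each off-diagonal metric derivative $g_{ii,j}$ to $b_{i,j}$ (and, since the principal curvatures are distinct, $b_i\neq b_j$), while the constant-curvature identity \eqref{Rijkl da isotropica correcaoo.} fixes the sectional curvatures of $g$. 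Feeding the Codazzi relation into the Gauss (Lamé) equations of the orthogonal net and into the norm identity \eqref{bij,k = 2nlambda...} should force the off-diagonal derivatives $g_{ii,j}$ to vanish. I emphasize that $g_{ii,j}=0$ for all $i\neq j$ makes $g=\sum_i g_{ii}(u_i)\,du_i^2$ flat, so this step is inseparable from the eventual conclusion $\lambda=0$; it is precisely here that the argument must be carried out with the most care, and the remaining verifications are routine substitutions.
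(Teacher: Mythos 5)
Your reduction of the structure equations \eqref{equacoes de estrutura menos gerais} to the coordinate basis is sound: the first and fourth equations of \eqref{sistema final em coordenadas} are indeed immediate once $L_{ij}=\lambda\delta_{ij}$, $C_i=0$, $B_{ij}=b_i\delta_{ij}$ are inserted, and your bookkeeping of the residual connection terms (namely $\Gamma^k_{ii}=-g_{ii,k}/(2g_{ii}\sqrt{g_{kk}})$, the vanishing of $\Gamma^l_{ij}$ for pairwise distinct indices in an orthogonal net, and the Codazzi consequence $b_{i,j}=-\tfrac{g_{ii,j}}{2g_{ii}}(b_i-b_j)$) is correct. You also locate the crux accurately: the second and third equations of \eqref{sistema final em coordenadas} hold precisely when $g_{ii,j}=0$ for all $i\neq j$.

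That crux is exactly where your proposal stops being a proof. You assert that feeding the Codazzi relation into the Gauss equations and into \eqref{bij,k = 2nlambda...} ``should force'' $g_{ii,j}=0$, but you do not carry this out, and on inspection it does not close. Writing out $\sum_{ijk}(B_{ij,k})^2$ with your Codazzi relations gives $\sum_i\big(E_ib_i\big)^2+3\sum_{i\neq j}\big(E_jb_i\big)^2=2n\lambda$, which only expresses the transverse derivatives of the $b_i$ in terms of $\lambda$; it annihilates them only if one already knows $\lambda=0$, and in the paper's logic $\lambda=0$ is deduced only in the subsequent integrability lemma, which presupposes the very system you are trying to establish. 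Likewise the Gauss constraints $R_{ijij}=2\lambda$, $R_{ijik}=0$ are second-order Lam\'e-type equations on the $g_{ii}$ and do not visibly kill the first derivatives $g_{ii,j}$. The paper takes a different and much shorter route at this point: it observes $\Gamma^i_{ij}=\omega_{ii}(E_j)=0$ from the antisymmetry of the connection forms, computes $\Gamma^i_{ji}=g_{ii,j}/(2g_{ii}\sqrt{g_{jj}})$ by pairing the mixed structure equation with $Y_{,i}$ and differentiating \eqref{L metrica no sc}, and then invokes the symmetry $\Gamma^i_{ij}=\Gamma^i_{ji}$ to conclude $g_{ii,j}=0$. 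Whatever one thinks of that symmetry claim for orthonormal-frame coefficients, your proposal supplies no substitute for it: as written, the decisive step is announced rather than proved, so the second and third equations of \eqref{sistema final em coordenadas} are not established.
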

\begin{proof}
Substituting  
\eqref{tensores B e L comutando ingles..} and \eqref{campos Ei no sistema de coordenadas} into the structure equations 
\eqref{equacoes de estrutura menos gerais}, we obtain the following system of differential equations,
\begin{equation}\label{equacoes de estrutura caso L-isotropica em coordenadas}
\begin{cases}
\dfrac{1}{\sqrt{g_{ii}}}N_{,i} = \dfrac{1}{\sqrt{g_{ii}}}\lambda Y_{,i},\, \, \, \forall i=1,\ldots, n,\\
\\
\dfrac{Y_{,ii}}{g_{ii}} - \dfrac{g_{ii,i}Y_{,i}}{2g_{ii}^2} = \lambda Y + N + \displaystyle\sum _{k}\Gamma^{k}_{ii}\dfrac{1}{\sqrt{g_{kk}}}Y_{,k} + b_{i}P,\, \, \, \forall i=1,\ldots, n,\\
\\
\dfrac{Y_{,ij}}{\sqrt{g_{ii}}\sqrt{g_{jj}}} - \dfrac{g_{ii,j}Y_{,i}}{2g_{ii}\sqrt{g_{ii}}\sqrt{g_{jj}}} = \displaystyle\sum _{k}\Gamma^{k}_{ij}\dfrac{1}{\sqrt{g_{kk}}}Y_{,k},\, \, \, \forall i\neq j=1,\ldots, n,\\
\\
\dfrac{1}{\sqrt{g_{ii}}}\eta_{,i} = \dfrac{1}{\sqrt{g_{ii}}}b_{i}Y_{,i},\, \, \, \forall i=1,\ldots, n,
\end{cases}
\end{equation}
where $b_i$ are the principal curvatures of Laguerre and $\Gamma^{k}_{ij}$ are the Christoffel symbols 
in the metric $g$. We will prove that
\eqref{equacoes de estrutura caso L-isotropica em coordenadas} reduces to 
\eqref{sistema final em coordenadas}. First, we want to prove that 
$\Gamma^{i}_{ij} = 0$, for all $i,j=1,\ldots,n$. Furthermore, we will prove that
$\Gamma^{i}_{ij} = \Gamma^{i}_{ji} = 0$ is equivalent to $g_{ii,j} = 0$, for all $i\neq j=1,\ldots,n$.

As defined in 
\eqref{equacoes de estrutura menos gerais}, we have
$\Gamma ^{k}_{ij} = \omega_{ik}(E_j)$ for all $i, j, k=1,\ldots,n$. Hence we obtain that
\begin{equation}\label{gama iij}
\Gamma^{i}_{ij}=0,\, \, \, \, \, \, \, \, \,  \forall i, j=1,\ldots,n.
\end{equation}
Considering the inner product of the third equation in (\ref{equacoes de estrutura caso L-isotropica em coordenadas}) 
with $\dfrac{Y_{,i}}{\sqrt{g_{ii}}}$, $i\neq j$ and differentiating (\ref{L metrica no sc}) with respect to $u_j$, $i\neq j$ we have that 
$\Gamma^{i}_{ji} = \dfrac{g_{ii,j}}{2g_{ii}\sqrt{g_{jj}}}$ for all 
$i\neq j=1,\ldots,n.$ Then, it follows from (\ref{gama iij}) that the symmetry of the Christoffel symbols is equivalent to

\begin{equation}\label{condicao dos simbolos}
    g_{ii,j}=0,\, \, \, \,  \forall i\neq j.
\end{equation}
Since the mixed partial derivatives of $Y$ commute, the remaining symbols satisfy 
$\Gamma^{l}_{ij} = \Gamma^{l}_{ji}$, for all $i\neq j\neq l.$ Therefore, it follows from (\ref{gama iij}) and 
(\ref{condicao dos simbolos}) that the system (\ref{equacoes de estrutura caso L-isotropica em coordenadas}) is given as in
(\ref{sistema final em coordenadas}).

\end{proof}

Note that the L-isotropic hypersurfaces in \( \mathbb{R}^{n+1} \) parameterized by curvature lines are determined through the integration of the system \eqref{sistema final em coordenadas}. Integrating this system and determining \( Y \), \( N \), and \( \eta \) provides us with an explicit parameterization for \( x \).
This will be our goal.
In what follows, by applying Frobenius' Theorem to the differential equations, we will obtain necessary and sufficient  conditions for the system \eqref{sistema final em coordenadas} to be integrable.

\begin{lemma}\label{lema da aplicacao do FroFro}
Under the same conditions as Proposition \ref{lema 1 arrumando o sistema}, the system 
\eqref{sistema final em coordenadas} is integrable if and only if

\begin{enumerate}
    \item $\lambda = 0$,
    \item $\Gamma^{j}_{li} = 0$ for all $i\neq j\neq l=1,\ldots,n$,
    \item  $b_i$ are constant for all $1,\ldots, n$.
\end{enumerate}
\end{lemma}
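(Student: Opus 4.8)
The plan is to read \eqref{sistema final em coordenadas} as a first-order linear system for the Laguerre moving frame and apply Frobenius' Theorem, so that ``integrable'' becomes equivalent to the compatibility of the mixed partial derivatives of $N$, $\eta$ and $Y$. Concretely, I would take $\{Y,Y_{,1},\ldots,Y_{,n},N,\eta\}$ as the unknowns (with $P$ a fixed constant vector): by \eqref{sistema final em coordenadas} every first partial derivative of each unknown is written as a combination of the frame $\{Y,N,Y_{,1},\ldots,Y_{,n},\eta,P\}$, which is a basis of $\mathbb{R}^{n+4}_2$. Before computing I would record the facts already available in this setting: from \eqref{condicao dos simbolos} one has $g_{ii,j}=0$ for $i\neq j$, so each $g_{ii}$ depends on $u_i$ alone; the symmetry of the Levi-Civita connection together with \eqref{gama iij} gives $\Gamma^{i}_{ij}=\Gamma^{j}_{ij}=0$; and the Laguerre principal curvatures $b_i=\rho^{-1}(r-r_i)$ of \eqref{curvaturas principais de Laguerre} are pairwise distinct, since the $r_i$ are. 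The recurring device is that, the frame being a basis, any vector identity splits into its scalar components.

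For the forward implication I would first impose $\eta_{,ij}=\eta_{,ji}$ with $i\neq j$. Differentiating the last equation of \eqref{sistema final em coordenadas} yields $b_{i,j}Y_{,i}+b_iY_{,ij}=b_{j,i}Y_{,j}+b_jY_{,ij}$. Since by the third equation $Y_{,ij}$ is a combination of the $Y_{,l}$ with $l\neq i,j$ only, comparing the $Y_{,i}$, $Y_{,j}$ and $Y_{,l}$ components and using $b_i\neq b_j$ gives $b_{i,j}=0$ for $i\neq j$ and $\Gamma^{l}_{ij}=0$ for all distinct $i,j,l$. This is condition (2), and it forces $Y_{,ij}=0$ for $i\neq j$.

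The main step is the third-order symmetry of $Y$. With $Y_{,ij}=0$ for $i\neq j$, I would compute $Y_{,iij}=\partial_j Y_{,ii}$ from the second equation of \eqref{sistema final em coordenadas}, using $g_{ii,j}=0$, $N_{,j}=\lambda Y_{,j}$, and $(g_{ii,i}/2g_{ii})_{,j}=0$ (the last being automatic as $g_{ii}=g_{ii}(u_i)$), to get $\partial_j Y_{,ii}=2\lambda g_{ii}Y_{,j}+g_{ii}b_{i,j}P$. On the other hand $Y_{,iji}=\partial_i Y_{,ij}=0$; equating and reading off the $Y_{,j}$-component gives $2\lambda g_{ii}=0$, i.e. $\lambda=0$ (condition (1)), while the $P$-component recovers $b_{i,j}=0$. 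Once $\lambda=0$ is known, Corollary \ref{corolario labda=0 segunda paralela} immediately yields that the eigenvalues $b_i$ of $\mathbb{B}$ are constant, which is condition (3). I expect the delicate point to be extracting $\lambda=0$ cleanly here, as it requires correctly resolving $\partial_j Y_{,ii}$ into frame components and verifying that nothing else contributes to the $Y_{,j}$-direction.

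For the converse I would substitute (1)--(3) back into \eqref{sistema final em coordenadas}: then $N$ is constant, $Y_{,ij}=0$ for $i\neq j$, each $b_i$ is constant, and $g_{ii}=g_{ii}(u_i)$, so the last equation reads $\eta_{,i}=b_iY_{,i}$ with $b_i$ constant. A direct check then shows $N_{,ij}=N_{,ji}=0$ and $\eta_{,ij}=\eta_{,ji}=0$, and that every third partial $Y_{,abc}$ is totally symmetric, the only possibly nonzero one being $Y_{,iii}$. Hence all compatibility conditions hold, and Frobenius' Theorem guarantees integrability, closing the equivalence.
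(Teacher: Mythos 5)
Your proof is correct, but it reaches conditions (1)–(3) by a genuinely different route than the paper. The paper works entirely with the single compatibility condition $(Y_{,ii})_{,j}=(Y_{,ij})_{,i}$: comparing $Y_{,j}$-components yields $2\lambda=\sum_{l\neq i\neq j}\Gamma^{l}_{ij}\Gamma^{j}_{li}$, the $P$-components give $b_{i,j}=0$, and then the antisymmetry $\Gamma^{s}_{ji}=-\Gamma^{j}_{si}$ (obtained by differentiating $\langle Y_{,j},Y_{,s}\rangle=0$) converts the first relation into $2\lambda+\sum_{l}(\Gamma^{j}_{li})^{2}=0$, so that the external input $\lambda\geq 0$ from Proposition \ref{lambra maior ou igual a zero} is what forces $\lambda=0$ and $\Gamma^{j}_{li}=0$ simultaneously. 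You instead exploit the $\eta$-equation of the system, which the paper's proof never touches: the symmetry $\eta_{,ij}=\eta_{,ji}$, together with the fact that $Y_{,ij}$ has no $Y_{,i}$- or $Y_{,j}$-component and the pairwise distinctness of the Laguerre principal curvatures $b_i$ (guaranteed here since the $r_i$ are distinct), gives $b_{i,j}=0$ and $\Gamma^{l}_{ij}=0$ in one stroke; then, with $Y_{,ij}=0$ in hand, the $Y_{,j}$-component of $(Y_{,ii})_{,j}=(Y_{,ij})_{,i}=0$ reads $2\lambda g_{ii}=0$, and $\lambda=0$ drops out with no appeal to $\lambda\geq 0$ or to the antisymmetry of the Christoffel symbols. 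Your route is more elementary in that it avoids the sign argument entirely, and it is also more thorough, since you verify the compatibility of all four equations of \eqref{sistema final em coordenadas} (including the converse direction, which the paper leaves largely implicit); the price is that it leans on $b_i\neq b_j$, which the paper's argument does not need but which holds under the stated hypotheses. Both arguments are valid.
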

\begin{proof}
In order the system \eqref{sistema final em coordenadas}  to have a local solution, it is necessary and sufficient
to verify the condition of Frobenius' Theorem, i.e., the condition
$(Y_{,ii})_{,j}=(Y_{,ij})_{,i}, \, \, \forall i\neq j,$ must be satisfied.

Differentiating the second equation of \eqref{sistema final em coordenadas} with respect to $u_j, i\neq j$ we have,
\begin{equation}
 (Y_{,ii})_{,j} = g_{ii,j}(\lambda Y + N + b_iP) + g_{ii}(\lambda Y_{,j} + N_{,j} + b_{i,j}P) + \bigg(\dfrac{g_{ii,i}}{2g_{ii}}\bigg)_{,j}Y_{,i} + \dfrac{g_{ii,i}}{2g_{ii}}Y_{,ij}.
 \end{equation}
Using the first and third equations of \eqref{sistema final em coordenadas} and also the property that 
$g_{ii,j} = 0$ when $i\neq j$, we have
\begin{equation}\label{Yii derivada em j}
  (Y_{,ii})_{,j} = g_{ii}(2\lambda Y_{,j} + b_{i,j}P) + \bigg(\dfrac{g_{ii,i}}{2g_{ii}}\bigg)_{,j}Y_{,i} + \dfrac{g_{ii,i}}{2g_{ii}}\displaystyle\sum_{\substack{l=1,\\l \neq i \neq j}}^{n}\Gamma^{l}_{ij}\dfrac{\sqrt{g_{ii}}\sqrt{g_{jj}}}{\sqrt{g_{ll}}}Y_{,l},\, \, i\neq j.    
\end{equation}
Now, differentiating the equation for $Y_{,ij}, i\neq j$, given in \eqref{sistema final em coordenadas}, with respect to the coordinate  $u_i$ we obtain,
\begin{equation*}
  (Y_{,ij})_{,i} = \displaystyle\sum_{\substack{l=1,\\l \neq i \neq j}}^{n}\bigg(\Gamma^{l}_{ij}\dfrac{\sqrt{g_{ii}}\sqrt{g_{jj}}}{\sqrt{g_{ll}}}\bigg)_{,i}Y_{l} + \displaystyle\sum_{\substack{l=1,\\l \neq i \neq j}}^{n}\Gamma^{l}_{ij}\dfrac{\sqrt{g_{ii}}\sqrt{g_{jj}}}{\sqrt{g_{ll}}}Y_{,li}.
 \end{equation*}
Using the third equation of  \eqref{sistema final em coordenadas}, it follows that:
\begin{eqnarray*}
  (Y_{,ij})_{,i} &=& 
 \displaystyle\sum_{\substack{l=1,\\l \neq i \neq j}}^{n}\Bigg(\Gamma^{l}_{ij}\dfrac{\sqrt{g_{ii}}\sqrt{g_{jj}}}{\sqrt{g_{ll}}} \Bigg)_{,i}Y_{,l}  + \Bigg(\displaystyle\sum_{\substack{l=1,\\l \neq i \neq j}}^{n}\Gamma^{l}_{ij}\dfrac{\sqrt{g_{ii}}\sqrt{g_{jj}}}{\sqrt{g_{ll}}}\Bigg)\Bigg(\displaystyle\sum_{\substack{s=1,\\s \neq i \neq l}}^{n}\Gamma^{s}_{li}\dfrac{\sqrt{g_{ll}}\sqrt{g_{ii}}}{\sqrt{g_{ss}}}Y_{,s}\Bigg) \\ 
  &=& \displaystyle\sum_{\substack{l=1,\\l \neq i \neq j}}^{n}\Bigg(\Gamma^{l}_{ij}\dfrac{\sqrt{g_{ii}}\sqrt{g_{jj}}}{\sqrt{g_{ll}}} \Bigg)_{,i}Y_{,l} + g_{ii}\sqrt{g_{jj}}\displaystyle\sum_{\substack{l=1,\\l \neq i \neq j}}^{n}\Gamma^{l}_{ij}\Bigg[\Gamma^{j}_{li}\dfrac{Y_{,j}}{\sqrt{g_{jj}}}+\displaystyle\sum_{\substack{s=1,\\s \neq i \neq j\neq l}}^{n}\Gamma^{s}_{li}\dfrac{Y_{,s}}{\sqrt{g_{ss}}}\Bigg].\\ 
 \end{eqnarray*}
Then, for $i\neq j$ we obtain,
\begin{equation}\label{Yij derivada em i}
  (Y_{,ij})_{,i} = \displaystyle\sum_{\substack{l=1,\\l \neq i \neq j}}^{n}\Bigg(\Gamma^{l}_{ij}\dfrac{\sqrt{g_{ii}}\sqrt{g_{jj}}}{\sqrt{g_{ll}}} \Bigg)_{,i}Y_{,l} + g_{ii}\displaystyle\sum_{\substack{l=1,\\l \neq i \neq j}}^{n}\Gamma^{l}_{ij}\Gamma^{j}_{li}Y_{,j}+ g_{ii}\sqrt{g_{jj}}\displaystyle\sum_{\substack{s=1,\\s \neq i \neq j\neq l}}^{n}\Gamma^{l}_{ij}\Gamma^{s}_{li}\dfrac{Y_{,s}}{\sqrt{g_{ss}}}.    
\end{equation}

Equating (\ref{Yii derivada em j}) and (\ref{Yij derivada em i})
since the frame is composed of linearly independent vectors, it follows that the coefficients of $Y_i$, $Y_j$, $Y_l$ $l\neq i\neq j$ and
$P$ are equal, i.e., for all $i\neq j$, we have
\begin{eqnarray}\label{condicao1}
&&\bigg(\dfrac{g_{ii,i}}{2g_{ii}}\bigg)_{,j} = 0,\, \, \, \, \forall i\neq j, \\ \label{condicao2}
&& 2\lambda = \displaystyle\sum_{\substack{l=1,\\l \neq i \neq j}}^{n}\Gamma^{l}_{ij}\Gamma^{j}_{li},\, \, \, \, \forall i\neq j, \\ \label{condicao3}
&&g_{ii}b_{i,j} = 0,\, \, \, \, \forall i\neq j, \\ \label{condicao4} \nonumber
&&\Gamma^{l}_{ij}\dfrac{g_{ii,i}\sqrt{g_{ii}}\sqrt{g_{jj}}}{2g_{ii}\sqrt{g_{ll}}} = \Bigg(\Gamma^{l}_{ij}\dfrac{\sqrt{g_{ii}}\sqrt{g_{jj}}}{\sqrt{g_{ll}}} \Bigg)_{,i} + g_{ii}\sqrt{g_{jj}}\displaystyle\sum_{\substack{s=1,\\s \neq i \neq j\neq l}}^{n}\Gamma^{s}_{ij}\Gamma^{l}_{si}\dfrac{1}{\sqrt{g_{ll}}},\, \forall l\neq i\neq j.\\
\end{eqnarray}
It follows from (\ref{condicao3}) that for each $i$, $b_i$ 
depends only on the variable $u_i$. Since $\sum _{i}B_{ii}=0$ we have that,
$b_{i,i} = 0.$ Therefore, $b_i$ are constants for all $i=1,\ldots,n$. 

Finally, let us look at equation (\ref{condicao2}).
Considering the inner product of the third equation in \eqref{sistema final em coordenadas} with 
$\dfrac{Y_{,s}}{\sqrt{g_{ss}}}, s\neq i\neq j$, we have
\begin{equation}\label{simbolo misto}
     \Gamma^{s}_{ij} =  \dfrac{1}{\sqrt{g_{ii}}\sqrt{g_{jj}}\sqrt{g_{ss}}}\langle Y_{,ij}, Y_{,s}\rangle.
\end{equation} 
Since the mixed derivatives of $Y$ commute,  
using  \eqref{L metrica no sc} and (\ref{simbolo misto}) we have,
\begin{eqnarray}\label{mais equação nos simbolos}
\Gamma^{s}_{ji} = \Gamma^{s}_{ij} &=& \dfrac{1}{\sqrt{g_{ii}}\sqrt{g_{jj}}\sqrt{g_{ss}}}\langle Y_{,ij}, Y_{,s}\rangle =- \dfrac{1}{\sqrt{g_{ii}}\sqrt{g_{jj}}\sqrt{g_{ss}}}\langle Y_{,j}, Y_{,si}\rangle.
\end{eqnarray}
Using again (\ref{simbolo misto}), with a reordering of the indices, it follows from  
\eqref{mais equação nos simbolos} that,
\begin{equation}\label{condicao importante dos simbolos - outro}
\Gamma^{s}_{ji} = - \Gamma^{j}_{si},\, \, \, \, \, \,  \forall i\neq j\neq s.
\end{equation}
Thus, from (\ref{condicao2}) and \eqref{condicao importante dos simbolos - outro} we have,
$2\lambda + \displaystyle\sum_{\substack{l=1,\\l \neq i \neq j}}^{n}(\Gamma^{j}_{li})^2 =0.$
Since, by Proposition \ref{lambra maior ou igual a zero} we have
$\lambda \geq 0$ then, 
$\lambda =  \Gamma^{j}_{li} = 0,\, \, \, \, \forall i\neq j\neq l.$ 
Therefore, the system has a solution for L-isotropic hypersurfaces with $\lambda =0$. Moreover, 
$b_i$ is constant for all $i=1,\ldots,n$ and $\Gamma^{j}_{li} = 0$ for all $i\neq j\neq l=1,\ldots,n$. This concludes the proof.
\end{proof}

\begin{corollary}
Let  $x:M\rightarrow\mathbb{R}^{n+1}$
be an L-isotropic hypersurface with $\lambda >0$. Then $x$ does not admit a parameterization by lines of curvature.
\end{corollary}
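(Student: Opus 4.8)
The plan is to argue by contradiction, leaning entirely on the integrability analysis already carried out in Lemma \ref{lema da aplicacao do FroFro}. Suppose, contrary to the claim, that $x$ is an L-isotropic hypersurface with $\lambda > 0$ that nonetheless admits a parametrization by lines of curvature with respect to the Laguerre metric $g$. Since $x$ is a genuine hypersurface satisfying the hypotheses of Proposition \ref{lema 1 arrumando o sistema}, its Laguerre position vector $Y$, the vector field $N$, and the Laguerre normal map $\eta$ are honest smooth maps into $\mathbb{R}^{n+4}_2$, and by that proposition they satisfy the system \eqref{sistema final em coordenadas} in the given line-of-curvature coordinates.

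The key observation is that these smooth maps form an \emph{actual} solution of \eqref{sistema final em coordenadas}, so the system is integrable in the sense required by Frobenius' Theorem; equivalently, the compatibility identity $(Y_{,ii})_{,j} = (Y_{,ij})_{,i}$ holds automatically, because the mixed partial derivatives of the smooth function $Y$ commute. I would then invoke precisely the computation inside the proof of Lemma \ref{lema da aplicacao do FroFro}: equating the two expressions for the relevant third derivative and comparing the coefficient of $Y_{,j}$ yields \eqref{condicao2}, namely $2\lambda = \sum_{l \neq i \neq j} \Gamma^{l}_{ij}\Gamma^{j}_{li}$, while the same commutation of mixed derivatives supplies the antisymmetry relation \eqref{condicao importante dos simbolos - outro}, that is $\Gamma^{s}_{ji} = -\Gamma^{j}_{si}$. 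Substituting the latter into the former gives $2\lambda + \sum_{l \neq i \neq j}(\Gamma^{j}_{li})^2 = 0$.

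Finally, since $\lambda \geq 0$ by Proposition \ref{lambra maior ou igual a zero} and the sum of squares is non-negative, this equation forces $\lambda = 0$, contradicting the hypothesis $\lambda > 0$. Hence no line-of-curvature parametrization can exist, which is the assertion. I do not expect any genuine obstacle here: the corollary is an immediate consequence of the lemma, and the only subtlety worth flagging is that the geometric existence of $x$ \emph{automatically} furnishes a solution of \eqref{sistema final em coordenadas}, so the Frobenius integrability conditions of Lemma \ref{lema da aplicacao do FroFro} are necessarily met and, in particular, rule out $\lambda > 0$.
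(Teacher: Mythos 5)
Your proof is correct and follows exactly the route the paper intends: the corollary is stated without proof as an immediate consequence of Lemma \ref{lema da aplicacao do FroFro}, and your argument — that an actual hypersurface parametrized by lines of curvature furnishes an actual smooth solution of \eqref{sistema final em coordenadas}, so the Frobenius compatibility conditions hold and force $2\lambda + \sum_{l\neq i\neq j}(\Gamma^{j}_{li})^2 = 0$, hence $\lambda = 0$ — is precisely the reasoning the lemma encapsulates. Your explicit flagging of the one subtlety (that geometric existence implies integrability, so the ``only if'' direction of the lemma applies) is a welcome clarification of a step the paper leaves tacit.
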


\begin{lemma}\label{lema feito na segunda de carnaval}
Let $Y$, $N$ and $\eta$ vector fields in
$\mathbb{R}^{n+4}_2$
satisfying \eqref{sistema final em coordenadas}. Then, 
\begin{equation}
\begin{cases}\label{sistema que quero integrar final}
N_{,i} =0\\
\dfrac{Y_{,ii}}{g_{ii}} - \dfrac{g_{ii,i}Y_{,i}}{2g_{ii}^2}\ = N + b_iP \\
Y_{,ij} = 0, \, \, \, \, \, \forall i\neq j = 1,\ldots, n  \\
\eta_{,i} =b_{i}Y_{,i}
\end{cases}
\end{equation}
and
\begin{equation}\label{Y lema novo q nao sei mais o que colocar}
    Y = \Bigg(\displaystyle\sum _{k=1}^{n}F^{1}_{k}, -\displaystyle\sum _{k=1}^{n}F^{1}_{k}, \displaystyle\sum _{k=1}^{n}F^{3}_{k}, \displaystyle\sum _{k=1}^{n}F^{4}_{k},\ldots,
\displaystyle\sum _{k=1}^{n}F^{n+3}_{k},
\displaystyle\sum _{k=1}^{n}F^{3}_{k} + 1\Bigg),
\end{equation}
where $F^{l}_{k}$ is a differentiable function that depends only on $u_k$ for $l=1,\ldots, n+4$ and $k=1,\ldots, n$.
Moreover,
\begin{equation}\label{nova eq artigo}
g_{ii} = \langle Y_{,i},Y_{,i}\rangle = \displaystyle\sum 
_{t=1}^{n}\big(F^{t+3}_{i,i}\big)^2,\, \, \forall i= 1,\ldots, 
n.
\end{equation}
\end{lemma}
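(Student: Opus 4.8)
The plan is to first collapse the general system \eqref{sistema final em coordenadas} to its degenerate form \eqref{sistema que quero integrar final} using the integrability analysis already carried out, and then integrate the resulting equations coordinate by coordinate.

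Since $Y$, $N$, $\eta$ form a smooth solution of \eqref{sistema final em coordenadas}, the cross-derivative identities $(Y_{,ii})_{,j}=(Y_{,ij})_{,i}$ hold for all $i\neq j$; these are exactly the compatibility conditions whose analysis in Lemma \ref{lema da aplicacao do FroFro} forces $\lambda=0$, $\Gamma^{j}_{li}=0$ for all distinct $i,j,l$, and each $b_i$ constant. Feeding these three facts back into \eqref{sistema final em coordenadas} yields \eqref{sistema que quero integrar final}: the first line becomes $N_{,i}=0$; the $\lambda Y$ term disappears from the second line; every coefficient $\Gamma^{l}_{ij}$ (with $l\neq i\neq j$) in the third line vanishes, leaving $Y_{,ij}=0$; and the fourth line is untouched.

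Next I would integrate $Y_{,ij}=0$ for $i\neq j$. Coordinatewise this says every second mixed partial of $Y^{l}$ vanishes, so $Y^{l}=\sum_{k=1}^{n}F^{l}_{k}(u_k)$ for single-variable functions $F^{l}_{k}$, with additive constants absorbed. To fix the relations among the coordinates I would combine the orthogonality of the Laguerre frame with the constancy of $N$. Since $Y$ and $P=(1,-1,\vec{0},0)$ lie in orthogonal summands of the decomposition of $\mathbb{R}^{n+4}_2$, we have $\langle Y,P\rangle=0$, i.e.\ $Y^{1}+Y^{2}=0$, giving $Y^{2}=-Y^{1}$ and $F^{2}_{k}=-F^{1}_{k}$. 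Because $\lambda=0$ the first line of \eqref{sistema que quero integrar final} shows $N$ is constant, and by \eqref{relacoes de ortogonalidade para Y} together with the frame decomposition it is null with $\langle N,P\rangle=0$ and $\langle Y,N\rangle=-1$; up to a Laguerre transformation (which fixes $P$ and preserves the whole system) I may normalize $N=(0,0,1,\vec{0},1)$. Then $\langle Y,N\rangle=-1$ reads $Y^{3}-Y^{n+4}=-1$, that is $Y^{n+4}=Y^{3}+1$, so the last coordinate is built from the same functions $F^{3}_{k}$ up to the additive constant $1$. Together these produce the stated form \eqref{Y lema novo q nao sei mais o que colocar}.

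Finally, differentiating \eqref{Y lema novo q nao sei mais o que colocar} with respect to $u_i$ annihilates every summand except the $u_i$-dependent one, so $Y_{,i}=\big(F^{1}_{i,i},-F^{1}_{i,i},F^{3}_{i,i},F^{4}_{i,i},\ldots,F^{n+3}_{i,i},F^{3}_{i,i}\big)$. Pairing $Y_{,i}$ with itself in the signature \eqref{produto interno com dois menos}, the first two entries cancel as $-(F^{1}_{i,i})^{2}+(F^{1}_{i,i})^{2}=0$, and positions $3$ and $n+4$ cancel as $(F^{3}_{i,i})^{2}-(F^{3}_{i,i})^{2}=0$; only positions $4,\ldots,n+3$ survive, giving $g_{ii}=\langle Y_{,i},Y_{,i}\rangle=\sum_{t=1}^{n}(F^{t+3}_{i,i})^{2}$, which is \eqref{nova eq artigo}. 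The step I expect to require the most care is the normalization of the constant vector $N$: one must verify that $N$ is null, orthogonal to $P$, and satisfies $\langle Y,N\rangle=-1$, and that the stabilizer $L\mathbb{G}$ of $P$ can carry $N$ to $(0,0,1,\vec{0},1)$, since it is precisely this normalization that produces the exact integration constant in $Y^{n+4}=Y^{3}+1$.
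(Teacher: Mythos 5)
Your proposal is correct and follows essentially the same route as the paper's proof: reduce to the degenerate system via the Frobenius analysis of Lemma \ref{lema da aplicacao do FroFro}, integrate $Y_{,ij}=0$ into separated variables, use $\langle Y,P\rangle=0$ and the normalization $N=(0,0,1,\vec{0},1)$ (justified by $N$ being a constant null vector, orthogonal to $P$ but not a multiple of it) to fix $Y^{2}=-Y^{1}$ and $Y^{n+4}=Y^{3}+1$, and then read off $g_{ii}$ from the signature. Your closing caution about the normalization of $N$ is exactly the point the paper also addresses.
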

\begin{proof}
From Lemma \ref{lema da aplicacao do FroFro}, the system 
\eqref{sistema final em coordenadas} reduces to
\eqref{sistema que quero integrar final}. From the third equation of
\eqref{sistema que quero integrar final} it follows that
\begin{equation}
    Y = \displaystyle\sum _{k=1}^{n}F_k(u_k),
\end{equation}
where $F_k(u_k)= (F^{1}_{k}(u_k),F^{2}_{k}(u_k),\ldots,F^{n+4}_{k}(u_k))\in\mathbb{R}^{n+4}$ for all $k=1,\ldots,n$. Since $Y\in\mathbb{R}^{n+4}_2$, we will  
consider the coordinate functions of $Y$,
\begin{equation}\label{Coordenadas de Yl}
    Y^{l} = \displaystyle\sum _{k=1}^{n}F^{l}_{k}(u_k),
\end{equation}
where $F_{k}^{l}(u_k) \,  \,  \,  k=1,\ldots,n\, \, \, \text{and}\, \, \, l=1,\ldots,(n+4)$ denotes the $l$-th coordinate of the differentiable function $F_k$. With this notation, we will omit the variable $u_k$. Since $\langle Y,P \rangle = 0$, where $P=(1,-1,0,\vec{0},0)$, 
$\vec{0}\in\mathbb{R}^{n}$, we have that $\sum _{k=1}^{n}F^{1}_{k} + \sum _{k=1}^{n}F^{2}_{k} =0$, i.e.,
\begin{equation}\label{F1i=-F2i}
Y^{2}= -Y^{1}.
\end{equation}
Then,
\begin{eqnarray}\label{Y nao sei mais o que colocar}
    Y = \Bigg(\displaystyle\sum _{k=1}^{n}F^{1}_{k}, -\displaystyle\sum _{k=1}^{n}F^{1}_{k}, \displaystyle\sum _{k=1}^{n}F^{3}_{k}, \displaystyle\sum _{k=1}^{n}F^{4}_{k},\ldots,
    \displaystyle\sum _{k=1}^{n}F^{n+3}_{k},
\displaystyle\sum _{k=1}^{n}F^{(n+4)}_{k}\Bigg).
\end{eqnarray}
Note that the lightlike vector $N\in\mathbb{R}^{n+4}_2$ is constant, nonzero, and it is not a multiple of $P$.
In fact, from the first equation of (\ref{sistema que quero integrar final}) we have that $N=(N^{1},-N^{1},\ldots,N^{n+4})\in\mathbb{R}^{n+4}_2$ is constant, where $N^{2} = -N^{1}$, because $\langle N,P \rangle = 0$. Since $\langle N,Y \rangle = -1$, it follows that $N$ is nonzero. Moreover, if $N=N^{1}P$ we have a contradiction
$-1 = \langle N,Y \rangle = N^{1}\langle P,Y \rangle=0.$
Thus, $N=(N^{1},-N^{1},\ldots,N^{n+4})\in\mathbb{R}^{n+4}_2$ is a constant lightlike vector where there exists at least one index 
$m=3,\ldots,(n+4)$ such that $N^{m}\neq 0$.
We will choose, without loss of generality, $N=(0,0,1,\Vec{0},1)$, where 
$\Vec{0}\in\mathbb{R}^n$.

Thus, using the information that 
$\langle N,Y \rangle = -1$ we have, $Y^3 - Y^{n+4} = -1$, i.e.,
\begin{eqnarray}\label{F8 = 1+ F3}
\displaystyle\sum _{k=1}^{n}F^{(n+4)}_{k} &=& \displaystyle\sum _{k=1}^{n}F^{3}_{k} + 1.
\end{eqnarray}
Therefore, it follows from \eqref{Y nao sei mais o que colocar} and \eqref{F8 = 1+ F3} 
that the position vector $Y$ is given as in
\eqref{Y lema novo q nao sei mais o que colocar}.

Now, differentiating equation \eqref{F8 = 1+ F3}  with respect to $u_i$ we have,
\begin{equation}\label{F3i=F8i}
F^{n+4}_{i,i} = F^{3}_{i,i}, \, \, \, \forall i= 1,\ldots, n.
\end{equation}
Moreover, using (\ref{F1i=-F2i}) and (\ref{F3i=F8i}) it follows that \eqref{nova eq artigo} is satisfied. 

\end{proof}

We can now return to our main goal, which is to integrate the system (\ref{sistema que quero integrar final})
in order to obtain $Y, \eta\in\mathbb{R}^{n+4}_2$.
First, let us determine $Y$.
\begin{proposition}\label{correcao valter}
Consider $Y\in\mathbb{R}^{n+4}_2$
satisfying Lemma \ref{lema feito na segunda de carnaval}.
Then the differentiable functions
$F^{l}_{k}(u_k)$
$l=1,\ldots, n+4$ and $k=1,\ldots, n$ are given by
\begin{eqnarray}
&&F^{1}_{i} = \dfrac{h_{i}^{2}b_i}{2\big(c^{i+3}_{i}\big)^2} + h_i\beta^{1}_{i} + \gamma^{1}_{i}, \ \ \forall i=1,\ldots,n,\\
&&F^{s+3}_{i} = \dfrac{c^{s+3}_{i}}{c^{i+3}_{i}}h_i + \alpha^{s+3}_i, \ \ \forall i,s=1,\ldots,n, \\
&&F^{3}_{i} = \dfrac{h_{i}^{2}}{2\big(c^{i+3}_{i}\big)^2} + h_i\beta^{3}_{i} + \gamma^{3}_{i}, \ \ \forall i=1,\ldots,n,
\end{eqnarray}
where for all $i=1,\ldots,n$, $h_i(u_i)$ are differentiable functions  
that depend only on the variable $u_i$, $b_i$ are the Laguerre principal curvatures, and 
$c_{i}^{i+3}$, $\beta^{1}_{i}$, $\beta^{3}_{i}$,
$\gamma^{1}_{i}$, $\gamma^{3}_{i}$,
$\alpha^{s+3}_i$ are integration constants.  
Moreover,
\begin{eqnarray}\label{expressao gii em termo de hi}
g_{ii}=\dfrac{{h'}_{i}^{2}}{\big(c^{i+3}_{i}\big)^2},
\ \ \forall i=1,\ldots,n,
\end{eqnarray}
and 
\begin{eqnarray}\nonumber\label{Expressao final do Y em termo do hii}
Y^{l}
&=& \displaystyle\sum _{k=1}^{n}
\Bigg\{\Bigg[\dfrac{h_{k}^{2}}
{2\big(c^{k+3}_{k}\big)^2}b_k + h_{k}\beta^{1}_{k} + \gamma^{1}_{k} \Bigg] \Big(\delta_{l1} - \delta_{l2}\Big) \\ \nonumber
&+& \Bigg[ \dfrac{h_{k}^{2}}
{2\big(c^{k+3}_{k}\big)^2} + h_{k}\beta^{3}_{k}
+ \gamma^{3}_{k} \Bigg]\Big(\delta_{l3} + \delta_{l(n+4)}\Big)  \\
&+& \displaystyle\sum _{s=1}^{n}\Bigg(
\dfrac{c^{s+3}_{k}h_k}
{c^{k+3}_{k}} + \alpha^{s+3}_k\Bigg)\delta_{(s+3)l} \Bigg\}
+ 
\delta_{l(n+4)}, \ \ \forall l=1,\ldots,(n+4).
\end{eqnarray}
\end{proposition}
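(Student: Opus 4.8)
The plan is to integrate the reduced system \eqref{sistema que quero integrar final} coordinate by coordinate, exploiting the separation of variables already established in Lemma \ref{lema feito na segunda de carnaval}. Since $Y^{l} = \sum_{k} F^{l}_{k}(u_k)$ with each $F^{l}_{k}$ depending only on $u_k$, we have $Y^{l}_{,i} = (F^{l}_{i})'$ and $Y^{l}_{,ii} = (F^{l}_{i})''$, and by \eqref{condicao dos simbolos} the coefficient $g_{ii}$ depends only on $u_i$. The first observation is that the left-hand side of the second equation of \eqref{sistema que quero integrar final} is an exact derivative: for every coordinate $l$,
\begin{equation*}
\frac{(F^{l}_{i})''}{g_{ii}} - \frac{g_{ii,i}(F^{l}_{i})'}{2g_{ii}^{2}} = \frac{1}{\sqrt{g_{ii}}}\frac{d}{du_i}\!\left(\frac{(F^{l}_{i})'}{\sqrt{g_{ii}}}\right).
\end{equation*}
Because $N=(0,0,1,\vec{0},1)$ and $P=(1,-1,\vec{0},0)$ are constant and each $b_i$ is constant (Lemma \ref{lema da aplicacao do FroFro}), the vector $N+b_iP$ has constant components. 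Thus the $i$-th equation splits into $n+4$ scalar ODEs $\frac{1}{\sqrt{g_{ii}}}\frac{d}{du_i}\big((F^{l}_{i})'/\sqrt{g_{ii}}\big) = N^{l}+b_iP^{l}$, whose right-hand sides are $b_i$ for $l=1$, $-b_i$ for $l=2$, $1$ for $l=3$ and $l=n+4$, and $0$ for $l=4,\ldots,n+3$.

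Next I would integrate the vanishing cases first. For $l=s+3$, $s=1,\ldots,n$, a single integration gives $(F^{s+3}_{i})' = c^{s+3}_{i}\sqrt{g_{ii}}$ for constants $c^{s+3}_{i}$. Substituting these into \eqref{nova eq artigo} yields $\sum_{s}(c^{s+3}_{i})^2=1$, while expanding $\langle Y_{,i},Y_{,j}\rangle$ in the signature of \eqref{produto interno com dois menos} collapses to $\sum_{s}F^{s+3}_{i,i}F^{s+3}_{j,j}$, so the off-diagonal relations in \eqref{L metrica no sc} force the vectors $\vec{c}_i=(c^{s+3}_i)_s$ to be orthonormal in $\mathbb{R}^n$; hence $C=(c^{s+3}_i)_{i,s}$ is an orthogonal matrix. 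Since $\det C\neq 0$, the Leibniz expansion contains a nonvanishing term $\prod_i c^{\sigma(i)+3}_i$, so permuting the coordinates $4,\ldots,n+3$ by $\sigma$ — an element of $L\mathbb{G}$ fixing $P$, hence a Laguerre transformation by Theorem \ref{teorema que caracteriza equivalente por t.laguerre com y} — we may assume $c^{i+3}_i\neq 0$ for all $i$. I then define $h_i$ by $h_i' = c^{i+3}_i\sqrt{g_{ii}}$; this makes $h_i$ a strictly monotone differentiable function of $u_i$ with $g_{ii}=(h_i')^2/(c^{i+3}_i)^2$, which is \eqref{expressao gii em termo de hi}, and integrating $(F^{s+3}_i)'=(c^{s+3}_i/c^{i+3}_i)h_i'$ gives $F^{s+3}_i=(c^{s+3}_i/c^{i+3}_i)h_i+\alpha^{s+3}_i$.

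With $h_i$ in hand, the remaining coordinates are obtained by two successive integrations. For $l=1$, integrating $\frac{d}{du_i}\big((F^{1}_i)'/\sqrt{g_{ii}}\big)=b_i\sqrt{g_{ii}}=(b_i/c^{i+3}_i)h_i'$ once gives $(F^{1}_i)'/\sqrt{g_{ii}} = (b_i/c^{i+3}_i)h_i+\beta^{1}_i$, and a second integration using $\sqrt{g_{ii}}=h_i'/c^{i+3}_i$ yields $F^{1}_i = \frac{b_ih_i^{2}}{2(c^{i+3}_i)^2}+\beta^{1}_ih_i+\gamma^{1}_i$ after absorbing the factor $1/c^{i+3}_i$ into the renamed constants $\beta^{1}_i,\gamma^{1}_i$. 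The case $l=3$ is identical with $b_i$ replaced by $1$, producing $F^{3}_i=\frac{h_i^{2}}{2(c^{i+3}_i)^2}+\beta^{3}_ih_i+\gamma^{3}_i$; the cases $l=2$ and $l=n+4$ follow from $F^{2}_i=-F^{1}_i$ and \eqref{F8 = 1+ F3}. Collecting the coordinates according to the shape of $Y$ in \eqref{Y lema novo q nao sei mais o que colocar} and recording the additive $1$ in the last slot then assembles the explicit formula \eqref{Expressao final do Y em termo do hii}.

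I expect the one genuinely delicate point to be guaranteeing $c^{i+3}_i\neq 0$, since the entire parametrization by $h_i$ and the divisions by $c^{i+3}_i$ break down otherwise: an orthogonal matrix may well have vanishing diagonal entries, so the nonvanishing must be arranged globally by a single coordinate permutation, which is legitimate only because it is realized by an element of $L\mathbb{G}$ and the statement is up to Laguerre transformation. The remaining steps are routine integrations once the exact-derivative form of the operator and the constancy of $N+b_iP$ are recognized.
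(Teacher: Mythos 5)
Your proof is correct and follows essentially the same route as the paper: integrate the $l=s+3$ equations once to get $F^{s+3}_{i,i}=c^{s+3}_{i}\sqrt{g_{ii}}$, obtain the normalization $\sum_{s}\big(c^{s+3}_{i}\big)^2=1$, set $h_i'=c^{i+3}_{i}\sqrt{g_{ii}}$, and integrate the $l=1$ and $l=3$ equations twice using the same exact-derivative form of the operator. Your two deviations are actually tightenings rather than a different method --- you get the normalization by direct substitution into $g_{ii}=\sum_t\big(F^{t+3}_{i,i}\big)^2$ where the paper routes through a linear system in the $V^{s+3}_{i}$ and a determinant condition, and you justify $c^{i+3}_{i}\neq 0$ for all $i$ simultaneously via $\det\mathcal{C}\neq 0$ and a coordinate permutation realized in $L\mathbb{G}$, where the paper only asserts this ``without loss of generality.''
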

\begin{proof}
Since $N= (0,0,1,\Vec{0},1)$ and $P= 
(1,-1,0,0,\Vec{0})$, where $\Vec{0}\in\mathbb{R}^n$, we can rewrite the second equation of the system 
(\ref{sistema que quero integrar final}) for $Y$ in its coordinates $Y^l$ as follows:
\begin{eqnarray}
\dfrac{Y^{l}_{,ii}}{g_{ii}} - 
\dfrac{g_{ii,i}Y^{l}_{,i}}{2g_{ii}^2} = 
\delta_{l3} + \delta_{l(n+4)} + b_i(\delta_{l1} - \delta_{l2}),    
\end{eqnarray}
i.e.,
\begin{eqnarray}
\dfrac{1}{g_{ii}}\displaystyle\sum 
_{k=1}^{n}F^{l}_{k,ii} - 
\dfrac{g_{ii,i}}{2g_{ii}^2}\displaystyle\sum 
_{k=1}^{n}F^{l}_{k,i} = 
\delta_{l3} + \delta_{l(n+4)} + b_i(\delta_{l1} - \delta_{l2}).  
\end{eqnarray}
Since $F_k^l$ depends only on $u_k$, we have 
\begin{eqnarray}\label{segunda eq com os Fl}
\dfrac{F^{l}_{i,ii}}{g_{ii}} - 
\dfrac{g_{ii,i}F^{l}_{i,i}}{2g_{ii}^2} &=& 
\delta_{l3} + \delta_{l(n+4)} + b_i(\delta_{l1} - \delta_{l2}),\, \, \, \text{where}\, \, \, 
\left\{
\begin{array}{ll}
    l = 1,\ldots, (n+4),  \\
    i= 1,\ldots, n.
\end{array}
\right.
\end{eqnarray}
For $l=s+3$ and $s= 1,\ldots, n$ in \eqref{segunda eq com os Fl}, we get
$\dfrac{F^{s+3}_{i,ii}}{g_{ii}} - 
\dfrac{g_{ii,i}F^{s+3}_{i,i}}{2g_{ii}^2}  = 0$. Therefore, 
\begin{equation}\label{segunda correcao de 2.3...}
 \Bigg(\dfrac{F^{s+3}_{i,i}}{\sqrt{g_{ii}}}\Bigg)_{,i} = 0,  \ \ \forall i, s= 1,\ldots, n.   
\end{equation}
Integrating \eqref{segunda correcao de 2.3...} with respect to the variable $u_i$,
\begin{equation}\label{equacao das F'(j+3)icorrecao2.3}
\big(c^{s+3}_{i}\big)^2g_{ii} - \big(F^{s+3}_{i,i}\big)^2  = 0, \,\, \,\forall i, s= 1,\ldots, n,
\end{equation}
where $c^{s+3}_{i}$ are constants of integration.
Note that from \eqref{equacao das F'(j+3)icorrecao2.3}, 
choosing the constants $c_{i}^{s+3}$ for 
$i, s= 1,\ldots, n$ means choosing $Y^{s+3}_{,i}$,
which are initial conditions for the system of partial differential equations
\eqref{sistema que quero integrar final}.

Denote
\begin{equation}\label{V=F correcao nova de 2.3}
    V^{s+3}_{i} = \big(F^{s+3}_{i,i}\big)^2.
\end{equation} 
Then, it follows from  (\ref{nova eq artigo}), \eqref{equacao das F'(j+3)icorrecao2.3} and \eqref{V=F correcao nova de 2.3} that,
\begin{eqnarray}\label{equação dos V}
\bigg[\big(c^{s+3}_{i}\big)^2 - 1\bigg]V^{s+3}_{i} + 
\big(c^{s+3}_{i}\big)^2\displaystyle\sum_{\substack{t=1,\\t \neq s}}^{n}V^{t+3}_{i} 
=0, \,\, \,\forall i, s= 1,\ldots, n.
\end{eqnarray}
\newline For each fixed  $i=1,\ldots, n$ we can rewrite 
\eqref{equação dos V} in matrix form as follows,
$(\textit{C}_{i})_{n\times n}\textit{V}_{n\times 1}= 
\textit{0}_{n\times 1},$ where
\begin{equation}\nonumber
 (\textit{C}_{i})_{n\times n} =  \left(\begin{array}{ccccc}
\big(c^{4}_{i}\big)^2-1 & \big(c^{4}_{i}\big)^2  & 
\ldots & \big(c^{4}_{i}\big)^2 \\
\big(c^{5}_{i}\big)^2& \big(c^{5}_{i}\big)^2-1 & 
\ldots & \big(c^{5}_{i}\big)^2 \\
\vdots & \vdots & \ddots & \vdots \\
\big(c^{n+3}_{i}\big)^2 & \big(c^{n+3}_{i}\big)^2 & 
\ldots & 
\big(c^{n+3}_{i}\big)^2-1\\
\end{array}\right)_{n\times n}
\textit{V}_{n\times 1} = \left(\begin{array}{c} 
V^{4}_{i}\\ V^{5}_{i}\\ \vdots \\ V^{n+3}_{i}
\end{array}\right)_{n\times 1} 
\textit{0}_{n\times 1} =\left(\begin{array}{c} 0\\0\\ \vdots \\ 0 \end{array} \right)_{n\times 1}.
\end{equation}
Note that $det(\textit{C}_{i})= 1 - \displaystyle\sum 
_{t=1}^{n}\big(c^{t+3}_{i}\big)^2$.
Since we are looking for nontrivial solutions of this system of equations, the determinant of the matrix $\textit{C}_{i}$ must be zero, i.e., 
\begin{equation}\label{Condicao 1 nas constantes c}
\displaystyle\sum_{t=1}^{n}\big(c^{t+3}_{i}\big)^2 = 1
\ \ \ \ \ \forall i=1,\ldots,n.
\end{equation}
For each fixed  $i=1,\ldots,n$ it is sufficient that at least one element of the sum 
\eqref{Condicao 1 nas constantes c} is nonzero. I.e., for each $i$ there exists 
a $t$ such that $c^{t+3}_i\neq 0$.
Without loss of generality, assume that
$\big(c^{i+3}_{i}\big)^2\neq 0$, for all  
$i=1,\ldots,n.$ 
Note that the derivative of the position vector $Y$ in the direction 
$u_i$ is the tangent vector to the coordinate curves at a point of $M$. 
Since these derivatives are given in terms of the constants $c's$, as in 
\eqref{equacao das F'(j+3)icorrecao2.3}, when we choose these constants we are determining the tangent space of $M$ at that point.

Observe that in \eqref{equação dos V} we have a linear system with $n^2$ equations and $n^2$ functions. 
We will determine the functions $V$ by means of substitutions.
Fix an index \eqref{equação dos V} in
$i=1,\ldots,n $ and consider $s=i$,
$$ \bigg[\big(c^{i+3}_{i}\big)^2 - 1\bigg]V^{i+3}_{i} + 
\big(c^{i+3}_{i}\big)^2\displaystyle\sum_{\substack{t=1,\\t \neq i}}^{n}V^{t+3}_{i} =0.$$
Since $\big(c^{i+3}_{i}\big)^2\neq 0$, we obtain
\begin{equation}\label{i fixo e s=i no sistema}
    V^{n+3}_{i} = \dfrac{[1-\big(c^{i+3}_{i}\big)^2]}{\big(c^{i+3}_{i}\big)^2}V^{i+3}_i - 
    \displaystyle\sum_{\substack{t=1,\\t \neq i}}^{n-1}V^{t+3}_{i}.
\end{equation}
Now, considering other fixed indices,
$s\neq i$ and $s\neq n$ in \eqref{equação dos V}, we have 
\begin{eqnarray}\label{i fixo e sdiferentei no sistema}
 \bigg[\big(c^{s+3}_{i}\big)^2 - 1\bigg]V^{s+3}_{i} + 
\big(c^{s+3}_{i}\big)^2\bigg(\displaystyle\sum_{\substack{t=1,\\t \neq s}}^{n-1}V^{t+3}_{i} + V^{n+3}_{i}\bigg) =0 \ \ \  s\neq n.
\end{eqnarray}
Substituting \eqref{i fixo e s=i no sistema} into
\eqref{i fixo e sdiferentei no sistema}, it follows from a straightforward 
computation that,

\begin{equation}\label{expressao Vs+3 s diferente de i}
V^{s+3}_{i} = \dfrac{\big(c^{s+3}_{i}\big)^2}{\big(c^{i+3}_{i}\big)^2}V^{i+3}_i\ \ \  s =1,\ldots,(n-1),\, \, s\neq i,\, \, \, i=1,\ldots,n.
\end{equation}

Finally, using the expressions
\eqref{Condicao 1 nas constantes c}, \eqref{i fixo e s=i no sistema} and
\eqref{expressao Vs+3 s diferente de i} we obtain,
\begin{equation}\label{expressao Vs+3 s igual de i
correcao 23}
V^{n+3}_{i} = \dfrac{\big(c^{n+3}_{i}\big)^2}{\big(c^{i+3}_{i}\big)^2}V^{i+3}_i, \ \ \ i=1,\ldots,n.
\end{equation}
Therefore, it follows from \eqref{expressao Vs+3 s diferente de i} and \eqref{expressao Vs+3 s 
igual de i correcao 23} that all functions $V^{s+3}_{i}$, 
for all $i,s \in \{1, \ldots, n\}$ with $s \neq i$, 
are expressed in terms of a differentiable function $V^{i+3}_i$ that depends only on the variable $u_i$.

Consider $V^{i+3}_i = h_i'(u_i)^2$, for all $i = 1, \ldots, n$. From now on, in order to simplify our notation, we will omit the variable in the functions 
$h_i'(u_i)$, understanding that $h_i$, and its derivatives, are differentiable functions depending only on the variable $u_i$.

It follows from \eqref{V=F correcao nova de 2.3}, \eqref{expressao Vs+3 s diferente de i}, and \eqref{expressao Vs+3 s 
igual de i correcao 23} that for all 
$i,s = 1, \ldots, n$ we have $\big(F^{s+3}_{i,i}\big)^2= V^{s+3}_{i} = 
  \dfrac{\big(c^{s+3}_{i}\big)^2}
  {\big(c^{i+3}_{i}\big)^2}{h'}_{i}^{2}.$

\noindent Therefore,
\begin{equation}\label{expressao final F}
    F^{s+3}_{i,i} = \dfrac{c^{s+3}_{i}}{c^{i+3}_{i}}h'_i, \ \ \forall i,s=1,\ldots,n.
\end{equation}
Integrating \eqref{expressao final F} with respect to $u_i$, we obtain
\begin{equation}\label{expressao final F integrada}
    F^{s+3}_{i} = \dfrac{c^{s+3}_{i}}{c^{i+3}_{i}}h_i + \alpha^{s+3}_i, \ \ \forall i,s=1,\ldots,n,
\end{equation}
where $\alpha^{s+3}_i$ are constants of integration.

Once these functions are obtained, we can rewrite the components of the 
metric $g_{ii}$ in terms of the derivatives of the functions $h_i$ and 
the constants $c$. Making use of 
\eqref{nova eq artigo},  \eqref{Condicao 1 nas constantes c} and
\eqref{expressao final F} we obtain,
\begin{eqnarray*}
g_{ii}&=& \displaystyle\sum 
_{t=1}^{n}\big(F^{t+3}_{i,i}\big)^2 = \displaystyle\sum 
_{t=1}^{n} \dfrac{\big(c^{t+3}_{i}\big)^2}
  {\big(c^{i+3}_{i}\big)^2}{h'}_{i}^{2},
\ \ \forall i=1,\ldots,n.
\end{eqnarray*}
Therefore, we have proved
\eqref{expressao gii em termo de hi}. 

In what follows, we will determine the remaining functions needed to express 
the position vector $Y$, namely the functions $F^1_i$ and $F^3_i$, for all 
$i = 1, \ldots, n$. Choosing $l = 1$ in \eqref{segunda eq com os Fl}, we have
\begin{equation*}
  \dfrac{F^{1}_{i,ii}}{g_{ii}} - 
\dfrac{g_{ii,i}F^{1}_{i,i}}{2g_{ii}^2} = b_i, \ \ \forall i=1,\ldots,n. 
\end{equation*}
Substituting expression \eqref{expressao gii em termo de hi} and its derivatives 
with respect to $u_i$ into the previous equation, we obtain the following 
ordinary differential equation, 
\begin{eqnarray}\nonumber
 \dfrac{F^{1}_{i,ii}\big(c^{i+3}_{i}\big)^2}{{h'}_{i}^{2}} - 
\dfrac{h''_{i}F^{1}_{i,i}\big(c^{i+3}_{i}\big)^2}{{h'}_{i}^{3}} = b_i.
\end{eqnarray}
Multiplying both sides by 
$\dfrac{{h'}_{i}^{2}}{\big(c^{i+3}_{i}\big)^2}$, 
and recalling that $\big(c^{i+3}_{i}\big)^2 \neq 0$ for all $i = 1, \ldots, n$, 
we obtain

\begin{equation}\label{EDO F1}
 F^{1}_{i,ii} - 
\dfrac{h''_{i}F^{1}_{i,i}}{{h'}_{i}} =\dfrac{b_i{h'}_{i}^{2}}{\big(c^{i+3}_{i}\big)^2}, \ \ \forall i=1,\ldots,n.   
\end{equation}
Integrating equation \eqref{EDO F1}, it follows that
\begin{equation}\label{F1 final}
 F^{1}_{i} = \dfrac{h_{i}^{2}b_i}{2\big(c^{i+3}_{i}\big)^2} + h_i\beta^{1}_{i} + \gamma^{1}_{i}, \ \ \forall i=1,\ldots,n,   
\end{equation}
where $\beta^{1}_{i}$ and $\gamma^{1}_{i}$ are constants of integration 
for all $i=1,\ldots,n$.

Finally, considering $l = 3$ in \eqref{segunda eq com os Fl}, we have
$\dfrac{F^{3}_{i,ii}}{g_{ii}} - 
\dfrac{g_{ii,i}F^{3}_{i,i}}{2g_{ii}^2} = 1$, for all $i=1,\ldots,n.$
Similarly to the previous calculations, we obtain
\begin{equation}\label{F3 final}
 F^{3}_{i} = \dfrac{h_{i}^{2}}{2\big(c^{i+3}_{i}\big)^2} + h_i\beta^{3}_{i} + \gamma^{3}_{i}, \ \ \forall i=1,\ldots,n,   
\end{equation}
where $\beta^{3}_{i}$ and $\gamma^{3}_{i}$ are constants of integration for 
all $i=1,\ldots,n$.

Substituting the equations \eqref{expressao final F integrada}, 
\eqref{F1 final}, and \eqref{F3 final} into 
\eqref{Y lema novo q nao sei mais o que colocar}, 
we obtain the first part of the proposition, which implies that each coordinate function of the position vector $Y$ is given by
\eqref{Expressao final do Y em termo do hii}.
\end{proof}
Now, our goal is to determine the vector field
$\eta\in C^{n+3}\subset\mathbb{R}^{n+4}_{2}$.
\begin{proposition}
Consider $Y$, $N$, and $\eta$ as vector fields in 
$\mathbb{R}^{n+4}_2$ satisfying (\ref{sistema que quero integrar final}). 
Let $Y$ be given as in \eqref{Expressao final do Y em termo do hii} in terms of $h_k(u_k)$, the Laguerre principal curvatures $b_k$ and the constants $c_{k}^{k+3}$, $\beta^{1}_{k}$, and $\beta^{3}_{k}$.
Then the coordinate functions of $\eta$, denoted by $\eta^{l}$ for 
$l = 1, \ldots, (n+4)$, are given by:

\begin{eqnarray}\nonumber\label{Expressao final do eta em termo do hii}
\eta^{l}&=& \displaystyle\sum _{k=1}^{n}
\Bigg\{\Bigg[\dfrac{h_{k}^{2}b_{k}^2}{2\big(c^{k+3}_{k}\big)^2} + h_{k}\beta^{1}_{k}b_{k}\Bigg]\Big(\delta_{l1} - \delta_{l2}\Big)  
+ \Bigg[\dfrac{h_{k}^{2}b_{k}}{2\big(c^{k+3}_{k}\big)^2} + h_{k}\beta^{3}_{k}b_{k} \Bigg](\delta_{l3} + \delta_{l(n+4)}) \\
&+&\displaystyle\sum _{s=1}^{n}
\dfrac{c^{s+3}_{k}h_{k}b_{k}}{c^{k+3}_k}\delta_{l(s+3)}
\Bigg\} + \psi^l, \ \ \forall l=1,\ldots,(n+4),
\end{eqnarray}
where $\psi^l$ are constants 
of integration, such that
 $\psi^3 = \psi^{n+4}$.
\end{proposition}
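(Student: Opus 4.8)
The plan is to integrate the fourth equation of the system \eqref{sistema que quero integrar final}, namely $\eta_{,i} = b_i Y_{,i}$, one coordinate at a time. Two facts make this integration immediate. First, by Lemma \ref{lema da aplicacao do FroFro} each Laguerre principal curvature $b_i$ is constant. Second, by Proposition \ref{correcao valter} the position vector has the separated form $Y^l = \sum_{k=1}^n F^l_k(u_k)$, so differentiating $Y^l$ with respect to $u_i$ kills every summand except $k=i$. Thus in coordinates the equation becomes
\begin{equation*}
\eta^l_{,i} = b_i\,F^l_{i,i}(u_i),\qquad i=1,\ldots,n,\ \ l=1,\ldots,(n+4).
\end{equation*}

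First I would solve this first-order system. Since $b_i$ is constant and $F^l_{i,i}$ depends only on $u_i$, the right-hand side is a function of $u_i$ alone, so the compatibility condition $\eta^l_{,ij}=\eta^l_{,ji}$ holds automatically (both sides vanish). Putting $\tilde{\eta}^l = \eta^l - \sum_{k=1}^n b_k F^l_k(u_k)$ gives $\tilde{\eta}^l_{,i}=0$ for every $i$, whence $\tilde{\eta}^l$ equals a constant $\psi^l$ and
\begin{equation*}
\eta^l = \sum_{k=1}^n b_k\,F^l_k(u_k) + \psi^l,\qquad l=1,\ldots,(n+4).
\end{equation*}
I would then substitute the explicit $F^1_k$, $F^3_k$, $F^{s+3}_k$ from Proposition \ref{correcao valter}, using $F^2_k=-F^1_k$ from \eqref{F1i=-F2i} and the fact that $F^{n+4}_k$ and $F^3_k$ have the same $u_k$-derivative by \eqref{F3i=F8i}. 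Each integration constant $\gamma^1_k,\gamma^3_k,\alpha^{s+3}_k$, once multiplied by the constant $b_k$, contributes only a constant, which is absorbed into $\psi^l$; this is why only the $h_k$-dependent terms survive inside the sum in \eqref{Expressao final do eta em termo do hii}.

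The remaining point, and the only genuinely non-mechanical one, is the relation $\psi^3=\psi^{n+4}$. I would obtain it from the orthogonality $\langle\eta,N\rangle=0$, which is valid because $\eta$ lies in the block $\{\eta,P\}$ that is orthogonal to $\mathrm{span}\{Y,N\}\oplus\mathrm{span}\{E_1(Y),\ldots,E_n(Y)\}$ in the Laguerre frame decomposition. In the normalization $N=(0,0,1,\vec{0},1)$ chosen in Lemma \ref{lema feito na segunda de carnaval}, the inner product \eqref{produto interno com dois menos} yields $\langle\eta,N\rangle=\eta^3-\eta^{n+4}$, so $\eta^3=\eta^{n+4}$. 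Since the third and $(n+4)$-th coordinates of \eqref{Expressao final do eta em termo do hii} carry the identical $h_k$-dependent summand (both attached to $\delta_{l3}+\delta_{l(n+4)}$), this equality constrains only the constant parts, giving $\psi^3=\psi^{n+4}$. The main obstacle is precisely this last step: one must identify the correct surviving orthogonality relation and verify that the $u$-dependent parts of $\eta^3$ and $\eta^{n+4}$ coincide, so that the constraint falls entirely on the integration constants.
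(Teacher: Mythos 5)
Your proof is correct and follows essentially the same route as the paper's: integrate $\eta_{,i}=b_iY_{,i}$ coordinatewise using the separated form $Y^l=\sum_k F^l_k(u_k)$ and the constancy of the $b_i$, absorb the integration constants of the $F^l_k$ into $\psi^l$, and extract $\psi^3=\psi^{n+4}$ from the orthogonality $\langle\eta,N\rangle=0$ with $N=(0,0,1,\vec{0},1)$. The paper merely presents the same computation in matrix form before integrating; there is no substantive difference.
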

\begin{proof}
Since $\langle \eta, N \rangle = 0$, we have
 \begin{eqnarray}\label{eta3=etan+4}
\eta^3 - \eta^{(n+4)}  &=&0,
\end{eqnarray}
where $\eta^{l}$, for all $l=1,\ldots,(n+4)$ are
the coordinate functions of $\eta$.

Using the last equation of \eqref{sistema que quero integrar final}, since 
$b_i$ are constants and $Y_{,ij} = 0$ for all $i \neq j = 1, \ldots, n$, we note that 
$\eta$ is also a function of separated variables. 
I.e., for each $l = 1, \ldots, (n+4)$, the coordinate functions of $\eta$ are 
written as a sum of differentiable functions, each depending on a single variable.
Rewriting the last equation of \eqref{sistema que quero integrar final} in 
matrix form, we have

\begin{eqnarray*}
\left(\begin{array}{cccc}
\eta_{,1}^{1} & \eta_{,1}^{2}  & 
\ldots & \eta_{,1}^{(n+4)}\\
\eta_{,2}^{1} & \eta_{,2}^{2}  & 
\ldots & \eta_{,2}^{(n+4)}\\
\vdots & \vdots & \ddots & \vdots \\
\eta_{,n}^{1} & \eta_{,n}^{2}  & 
\ldots & \eta_{,n}^{(n+4)}
\end{array}\right)_{n\times(n+4)}=
\left(\begin{array}{cccc} 
b_{1} & 0 & \ldots & 0 \\ 
0 & b_{2} & \ldots & 0 \\ 
\vdots & \vdots & \ddots & \vdots \\
0 & 0 & \ldots & b_{n} 
\end{array}\right)_{n\times n} 
 \left(\begin{array}{cccc}
Y_{,1}^{1} & Y_{,1}^{2}  & 
\ldots & Y_{,1}^{(n+4)}\\
Y_{,2}^{1} & Y_{,2}^{2}  & 
\ldots & Y_{,2}^{(n+4)}\\
\vdots & \vdots & \ddots & \vdots \\
Y_{,n}^{1} & Y_{,n}^{2}  & 
\ldots & Y_{,n}^{(n+4)}
\end{array}\right)_{n\times(n+4)}
\end{eqnarray*}
where, similarly to the proof of Proposition \ref{correcao valter} for the position vector $Y$, 
$\eta^l_{,i},\ \ i=1,\ldots,n,\ \ l=1,\ldots,(n+4)$ 
denotes the derivative of the $l$-th coordinate function 
of $\eta$ with respect to the variable $u_i$.

Differentiating \eqref{Expressao final do Y em termo do 
hii} with respect to $u_i$, we obtain for all $i=1,\ldots,n$ and 
$l=1,\ldots,(n+4)$ that

\begin{eqnarray*}
 Y^{l}_{,i}=  \Bigg[\dfrac{h_{i}h'_{i}}
{\big(c^{i+3}_{i}\big)^2}b_i + h'_{i}\beta^{1}_{i}\Bigg] \Big(\delta_{l1} - \delta_{l2}\Big) 
+ \Bigg[ \dfrac{h_{i}h'_{i}}
{\big(c^{i+3}_{i}\big)^2} + h'_{i}\beta^{3}_{i}
\Bigg]\Big(\delta_{l3} + \delta_{l(n+4)}\Big) 
+ \displaystyle\sum _{s=1}^{n}\Bigg(
\dfrac{c^{s+3}_{i}h'_i}
{c^{i+3}_{i}} \Bigg)\delta_{l(s+3)}.
\end{eqnarray*}
Since $\eta_{,i} = b_i Y_{,i}$, we have that 
$\eta^l_{,i} = b_i Y^l_{,i}$, i.e.,
\begin{eqnarray*}
\eta^{l}_{,i}=  \Bigg[\dfrac{h_{i}h'_{i}}
{\big(c^{i+3}_{i}\big)^2}b_i^2 + h'_{i}\beta^{1}_{i}b_i\Bigg] \Big(\delta_{l1} - \delta_{l2}\Big) 
+ \Bigg[ \dfrac{h_{i}h'_{i}b_i}
{\big(c^{i+3}_{i}\big)^2} + h'_{i}\beta^{3}_{i}b_i
\Bigg]\Big(\delta_{l3} + \delta_{l(n+4)}\Big) 
+ \displaystyle\sum _{s=1}^{n}\Bigg(
\dfrac{c^{s+3}_{i}h'_ib_i}
{c^{i+3}_{i}} \Bigg)\delta_{l(s+3)}.
\end{eqnarray*}
Integrating the previous expression and using \eqref{eta3=etan+4}, we obtain,
\begin{eqnarray}\nonumber
\eta^{l}
&=& \displaystyle\sum _{k=1}^{n}
\Bigg\{\Bigg[\dfrac{h_{k}^{2}b_{k}^2}{2\big(c^{k+3}_{k}\big)^2} + h_{k}\beta^{1}_{k}b_{k}\Bigg]\Big(\delta_{l1} - \delta_{l2}\Big)  
+ \Bigg[\dfrac{h_{k}^{2}b_{k}}{2\big(c^{k+3}_{k}\big)^2} + h_{k}\beta^{3}_{k}b_{k} \Bigg](\delta_{l3} + \delta_{l(n+4)}) \\ \nonumber
&+&\displaystyle\sum _{s=1}^{n}
\dfrac{c^{s+3}_{k}h_{k}b_{k}}{c^{k+3}_k}\delta_{l(s+3)}
\Bigg\} + \psi^l, \ \ \forall l=1,\ldots,(n+4),
\end{eqnarray}
where $\psi^l\in\mathbb{R}$ is the constant of integration such that 
$\psi^3 = \psi^{n+4}$, since \eqref{eta3=etan+4} is satisfied.
\end{proof}

In order to simplify the expressions, let us consider the following change 
of variables
\begin{equation}\label{mudança de coordenadas 1}
v_i=\dfrac{h_i}{\sqrt{2}c^{i+3}_{i}}+
\dfrac{c^{i+3}_{i}\beta^{3}_{i}}
{\sqrt{2}}, 
\ \ \ \forall i=1,\ldots,n.
\end{equation}
Note that performing this change of variable means that 
we are reparametrizing each line of curvature separately.
Moreover, considering \eqref{mudança de coordenadas 1}
the expressions
\eqref{Expressao final do Y em termo do hii} and
\eqref{Expressao final do eta em termo do hii}
for $Y$ and $\eta$, respectively, can be rewritten as follows,

\begin{eqnarray}\label{Expressao depois da mudança do Y}\nonumber
Y^{l}&=&
\displaystyle\sum _{k=1}^{n}
\Bigg\{\Bigg[v_{k}^{2}b_k  + 
\sqrt{2}v_{k}c^{k+3}_{k}(\beta^{1}_{k} - 
\beta^{3}_{k}b_k) + 
\big(c^{k+3}_{k}\big)^2\beta^{3}_{k}
\Big(\dfrac{\beta^{3}_{k}b_k}{2} -
\beta^{1}_{k}\Big) +  \gamma^{1}_{k} \Bigg]
\Big(\delta_{l1} - \delta_{l2}\Big) \\
&+& \Bigg[ v_k^2 - 
\dfrac{\big(c^{k+3}_{k}\beta^{3}_{k}\big)^2}{2} 
+ \gamma^{3}_{k} \Bigg]\Big(
\delta_{l3} + \delta_{l(n+4)}\Big) \\  \nonumber
&+&
\displaystyle\sum _{s=1}^{n}\Big\{
\sqrt{2}v_{k}c^{s+3}_{k} - c^{s+3}_{k}c^{k+3}_{k}\beta^{3}_{k} +
\alpha^{s+3}_k\Big\}\delta_{l(s+3)}\Bigg\}
 + 
\delta_{l(n+4)}, \ \ \forall l=1,\ldots,(n+4),
\end{eqnarray}
and 
\begin{eqnarray}\nonumber
\label{Expressao depois da mudança no eta}
\eta^{l}&=& 
 \displaystyle\sum _{k=1}^{n}
\Bigg\{\Bigg[v_{k}^{2}b_k^2  + 
\sqrt{2}v_{k}b_kc^{k+3}_{k}(\beta^{1}_{k} - 
\beta^{3}_{k}b_k) + 
\big(c^{k+3}_{k}\big)^2\beta^{3}_{k}b_k
\Big(\dfrac{\beta^{3}_{k}b_k}{2} -
\beta^{1}_{k}\Big)\Bigg]
\Big(\delta_{l1} - \delta_{l2}\Big) \\
&+& \Bigg[ v_k^2b_k - 
\dfrac{b_k\big(c^{k+3}_{k}\beta^{3}_{k}\big)^2}{2} \Bigg]\Big(
\delta_{l3} + \delta_{l(n+4)}\Big) \\  \nonumber
&+&
\displaystyle\sum _{s=1}^{n}\Big\{
\sqrt{2}v_{k}b_kc^{s+3}_{k} - c^{s+3}_{k}c^{k+3}_{k}\beta^{3}_{k}b_k \Big\}\delta_{l(s+3)}\Bigg\}
 + 
\psi^l, \ \ \forall l=1,\ldots,(n+4).
\end{eqnarray}

Since $Y$ and $\eta$ are determined by \eqref{Expressao depois da mudança do Y} and \eqref{Expressao depois da mudança no eta}, our next goal is to verify that the remaining metric relations are satisfied. This process will yield necessary conditions for the constants of integration that were introduced during the calculations.


\begin{proposition}\label{Lema da conta gigante das constantes}
Let $x:M^n\rightarrow\mathbb{R}^{n+1}$ be an
L-isotropic hypersurface with $n$ distinct nonzero principal curvatures. Consider 
$Y:M^n\rightarrow C^{n+3}\subset
\mathbb{R}^{n+4}_2$ the Laguerre position vector of $x$, 
with the Laguerre metric $g=\langle dY,dY \rangle$, and 
$\eta:M^n\rightarrow C^{n+3}\subset
\mathbb{R}^{n+4}_2$ the Laguerre normal map.
Suppose that $M^n$ is connected and admits a parametrization by lines
of curvature with respect to the metric $g$. Then, the coordinate functions of 
$Y$ are given by,

\begin{eqnarray}\nonumber\label{Yl depois de todo ajeitamento}
Y^{l}&=&
\displaystyle\sum _{k=1}^{n}
\Bigg\{\Bigg[v_{k}^{2}b_k  + 
\sqrt{2}v_{k}c^{k+3}_{k}(\beta^{1}_{k} - 
\beta^{3}_{k}b_k) + 
\big(c^{k+3}_{k}\big)^2\beta^{3}_{k}
\Big(\dfrac{\beta^{3}_{k}b_k}{2} -
\beta^{1}_{k}\Big) +  \gamma^{1}_{k} \Bigg]
\Big(\delta_{l1} - \delta_{l2}\Big)\Bigg\} \\ \nonumber
&+& \Bigg[ \displaystyle\sum _{k=1}^{n}v_k^2 - 
\dfrac{1}{2} \Bigg]\Big(
\delta_{l3} + \delta_{l(n+4)}\Big) \\ 
&+&
\displaystyle\sum _{k,s=1}^{n}\Big\{
\sqrt{2}v_{k}c^{s+3}_{k}\Big\}\delta_{l(s+3)}
 + 
\delta_{l(n+4)}, \ \ \forall l=1,\ldots,(n+4),
\end{eqnarray}
\end{proposition}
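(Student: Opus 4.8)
The plan is to impose the one orthogonality relation from \eqref{relacoes de ortogonalidade para Y} that has not yet been exploited, namely that $Y$ takes values in the light cone $C^{n+3}$ of \eqref{cone de luz}, i.e. $\langle Y,Y\rangle = 0$, and to extract from it the conditions on the remaining integration constants $\alpha^{s+3}_k$ and $\gamma^3_k$. Since $\langle Y,P\rangle=0$ and $\langle Y,N\rangle=-1$ have already produced $Y^2=-Y^1$ (see \eqref{F1i=-F2i}) and $Y^{n+4}=Y^3+1$ (see \eqref{F8 = 1+ F3}), the inner product \eqref{produto interno com dois menos} collapses to
\[
\langle Y,Y\rangle = \sum_{s=1}^{n}\big(Y^{s+3}\big)^2 - 2Y^3 - 1 ,
\]
so the light-cone condition becomes $\sum_{s=1}^{n}\big(Y^{s+3}\big)^2 = 2Y^3+1$. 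I would substitute the coordinate functions of \eqref{Expressao depois da mudança do Y} into this identity. Because $g_{ii}={h'}_i^2/(c^{i+3}_i)^2>0$ in \eqref{expressao gii em termo de hi} forces $h'_i\neq 0$, the reparametrization \eqref{mudança de coordenadas 1} makes $(v_1,\dots,v_n)$ genuine local coordinates on the connected set $U$; hence the identity, being polynomial in the $v_k$ (each $Y^{s+3}$ is affine and $Y^3$ is quadratic, so $\langle Y,Y\rangle$ has degree two), must hold coefficientwise, and I would match its quadratic, linear and constant parts separately.

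The quadratic part gives $\sum_{s=1}^{n}c^{s+3}_k c^{s+3}_{k'}=\delta_{kk'}$: the diagonal case $k=k'$ is exactly \eqref{Condicao 1 nas constantes c}, while the off-diagonal case $k\neq k'$ is the new statement that the vectors $\vec c_k=(c^{4}_k,\dots,c^{n+3}_k)\in\mathbb{R}^n$ are orthonormal, hence a basis. (Equivalently this follows from the off-diagonal metric relations $\langle Y_{,i},Y_{,j}\rangle=0$, which one obtains by differentiating $\langle Y,Y\rangle=0$ twice and using $Y_{,ij}=0$ from \eqref{sistema que quero integrar final}.) Writing $A_s=\sum_{k}\big(\alpha^{s+3}_k-c^{s+3}_k c^{k+3}_k\beta^{3}_k\big)$ for the constant part of $Y^{s+3}$, the linear part of the identity reads $\sum_{s}A_s\,c^{s+3}_k=0$ for every $k$; since the $\vec c_k$ span $\mathbb{R}^n$, this forces $A_s=0$ for all $s$, which is precisely the reduction of the $\delta_{l(s+3)}$ coefficient from $\sqrt{2}v_kc^{s+3}_k-c^{s+3}_kc^{k+3}_k\beta^{3}_k+\alpha^{s+3}_k$ to $\sqrt{2}v_kc^{s+3}_k$ in \eqref{Yl depois de todo ajeitamento}.

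Finally, feeding $A_s=0$ into the constant part of the identity yields
\[
\sum_{k=1}^{n}\gamma^{3}_k-\tfrac{1}{2}\sum_{k=1}^{n}\big(c^{k+3}_k\beta^{3}_k\big)^2=-\tfrac{1}{2},
\]
which is exactly what is needed to reduce the $(\delta_{l3}+\delta_{l(n+4)})$ coefficient from $\sum_k\big(v_k^2-\tfrac12(c^{k+3}_k\beta^{3}_k)^2+\gamma^3_k\big)$ to $\sum_k v_k^2-\tfrac12$. Substituting these two reductions into \eqref{Expressao depois da mudança do Y}, and leaving the $(\delta_{l1}-\delta_{l2})$ coefficient untouched (it is annihilated in $\langle Y,Y\rangle$ by $Y^2=-Y^1$ and so is unconstrained by this proposition), produces \eqref{Yl depois de todo ajeitamento}. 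The step I expect to be the crux is the orthonormal-basis argument: one must first secure $\sum_s c^{s+3}_k c^{s+3}_{k'}=\delta_{kk'}$ from the quadratic matching together with the normalization \eqref{Condicao 1 nas constantes c}, since only then does the linear relation $\sum_s A_s c^{s+3}_k=0$ actually force $A_s=0$; the remaining identifications are routine coefficient bookkeeping.
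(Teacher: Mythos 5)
Your proposal is correct and follows essentially the same route as the paper: the paper also imposes $\langle Y,Y\rangle=0$ in the form $\sum_s(Y^{s+3})^2-2Y^3-1=0$, extracts the orthogonality $\sum_s c^{s+3}_ic^{s+3}_j=\delta_{ij}$ of the matrix $\mathcal{C}$ by differentiating twice in the $v$-coordinates (your quadratic coefficient matching), then inverts $\mathcal{C}$ to kill the constant part of $Y^{s+3}$ (your $A_s=0$) and evaluates the remaining constant term to fix $\sum_k\gamma^3_k$. The only difference is that you organize the computation as coefficient matching of a degree-two polynomial identity while the paper takes successive derivatives, which amounts to the same thing; you correctly identify the orthonormality of the rows of $\mathcal{C}$ as the step that makes the linear system for the $\alpha$'s solvable.
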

\begin{proof}
Using \eqref{F1i=-F2i}, 
\eqref{Y nao sei mais o que colocar} and $\langle Y,Y\rangle=0$ we have,
\begin{eqnarray}\label{nem sei mais o que colocar 2}
&&\displaystyle\sum _{s=1}^{n}(Y^{s+3})^2 -2Y^3- 1
=0,
\end{eqnarray}
where now $Y$ is a function of $(v_1, \ldots, v_n)$.
Differentiating \eqref{nem sei mais o que colocar 2}
with respect to $v_i$, $i=1,\ldots,n$, we obtain,
\begin{equation}\label{Y resumido primeira derivada}
\displaystyle\sum _{s=1}^{n}Y^{s+3}Y^{s+3}_{,i} -Y^3_{,i} =0
\end{equation}
Then, from \eqref{Expressao depois da mudança do Y}, it follows that,
\begin{equation}\label{correcao nova 25-04...}
    Y^{s+3}_{,i} = \sqrt{2}c^{s+3}_{i}\, \, \, \, \, \, \text{and}\, \, \, \, \, \, Y^{3}_{,i}=2v_i.
\end{equation}
Differentiating again \eqref{Y resumido primeira derivada} with respect to 
$v_j$, $j \neq i = 1,\ldots,n$, we obtain,
\begin{eqnarray}\label{ck.cs=0}
&& \displaystyle\sum _{s=1}^{n}c^{s+3}_{j}c^{s+3}_{i}= 0, \, \, \, \,\, \forall i\neq j=1,\ldots,n.
\end{eqnarray}
Combining \eqref{ck.cs=0} with the fact that
$ \displaystyle\sum _{s=1}^{n}\big(c^{s+3}_{i}\big)^2 =1$, for all $i=1,\ldots,n,$ we have

\begin{equation}\label{lembrando novamente que ciquadrado e 1}
    \displaystyle\sum _{s=1}^{n}c^{s+3}_{i}c^{s+3}_{j} =\delta_{ij},\, \, \, \,\, \forall i,j=1,\ldots,n.
\end{equation}
In other words, the matrix $\mathcal{C}_{n\times n}$ 
composed of all the constants $c^{s+3}_{i}$ is orthogonal, 
i.e., $\mathcal{C}$ is invertible and 
$\mathcal{C}^{-1} = \mathcal{C}^{T},$ where
\begin{eqnarray}\label{matriz das constantes C}
\mathcal{C}_{n\times n} =  
\left(\begin{array}{cccccc}
c^{4}_{1}& c^{5}_{1}  & c^{6}_{1} &
\ldots & c^{n+3}_{1} \\
c^{4}_{2}& c^{5}_{2}  & c^{6}_{2} &
\ldots & c^{n+3}_{2}\\
c^{4}_{3}& c^{5}_{3}  & c^{6}_{3} &
\ldots & c^{n+3}_{3}\\
\vdots & \vdots & \vdots & \ddots &\vdots \\
c^{4}_{n}& c^{5}_{n}  & c^{6}_{n} &
\ldots & c^{n+3}_{n}\
\end{array}\right)_{n\times n}.
\end{eqnarray}
From now on, whenever the matrix $\mathcal{C}$ is mentioned, we will be 
referring to \eqref{matriz das constantes C}.

Now, substituting \eqref{correcao nova 25-04...} and
\eqref{lembrando novamente que ciquadrado e 1} into 
\eqref{Y resumido primeira derivada}, we obtain,

\begin{equation}\label{novacriadacorrecao}
\displaystyle
\sum _{s,k=1}^{n}c^{s+3}_{i}\alpha^{s+3}_k =
c^{i+3}_{i}\beta^3_i,\, \, \, \, \, \forall i=1,\ldots,n.
\end{equation}
The expression \eqref{novacriadacorrecao} can be rewritten in matrix form as,
\begin{equation}\label{eq. matriz para alfa}
\mathcal{C}_{n\times n}\mathcal{\alpha}_{n\times 1}
=\mathcal{A}_{n\times 1},
\end{equation}
where 
\begin{eqnarray*}
\mathcal{\alpha}_{n\times 1} =  
\left(\begin{array}{c}
\displaystyle\sum _{k=1}^{n}\alpha^{4}_{k}\\
\displaystyle\sum _{k=1}^{n}\alpha^{5}_{k}\\
\displaystyle\sum _{k=1}^{n}\alpha^{6}_{k}\\
\vdots \\
\displaystyle\sum _{k=1}^{n}\alpha^{n+3}_{k}
\end{array}\right)_{n\times 1}, \ \ \ \ 
\mathcal{A}_{n\times 1} =  
\left(\begin{array}{c}
c^{4}_{1}\beta_{1}^3\\
c^{5}_{2}\beta_{2}^3\\
c^{6}_{3}\beta_{3}^3\\
\vdots \\
c^{n+3}_{n}\beta_{n}^3\\
\end{array}\right)_{n\times 1}.
\end{eqnarray*}
Since $\mathcal{C}$ is orthogonal, it follows from 
\eqref{eq. matriz para alfa} that
\begin{eqnarray}\label{alfa=Ct.A}
&&\mathcal{\alpha}_{n\times 1}
=\mathcal{C}^{T}_{n\times n}
\mathcal{A}_{n\times 1},
\end{eqnarray}
i.e., 
\begin{equation}\label{expressão final dos alfa}
\displaystyle\sum _{k=1}^{n}
 \Big(\alpha^{s+3}_{k}\Big) = 
 \displaystyle\sum _{k=1}^{n}
 \Big(\beta_{k}^3c^{s+3}_{k}c^{k+3}_{k}\Big), 
\ \ \ \forall s=1,\ldots,n.
\end{equation}
Note that each $s$ indicates a row of the 
matrix given by equation \eqref{alfa=Ct.A}.

Now, from \eqref{Expressao depois da mudança do Y} and
\eqref{expressão final dos alfa}, we have,
\begin{eqnarray}\label{Ys+2 correcao23....}
    Y^{s+3} = \displaystyle\sum _{k=1}^{n}\sqrt{2}v_{k}c^{s+3}_{k}.
\end{eqnarray}
From \eqref{Expressao depois da mudança do Y},
\begin{eqnarray}\label{Y3 correcao23....}
    Y^{3} = \displaystyle\sum _{k=1}^{n}\Bigg[v_k^2 - 
    \dfrac{\big(c^{k+3}_{k}\beta^{3}_{k}\big)^2}{2}
 + \gamma^{3}_{k}\Bigg].
\end{eqnarray}
Note that, using \eqref{lembrando novamente que ciquadrado e 1}, we have,
\begin{eqnarray}\label{Ys+2^2 correcao23....}
   \displaystyle\sum _{s=1}^{n}(Y^{s+3})^2 = 
   \displaystyle\sum _{s=1}^{n}\Bigg[
   \displaystyle\sum _{k=1}^{n}\sqrt{2}v_{k}c^{s+3}_{k}\displaystyle\sum _{j=1}^{n}\sqrt{2}v_{j}c^{s+3}_{j} \Bigg]= 
   2\displaystyle\sum _{s,k,j=1}^{n}v_{k}v_{j}c^{s+3}_{j}c^{s+3}_{k} =
   2\displaystyle\sum _{k=1}^{n}v_{k}^2.
\end{eqnarray}

Substituting \eqref{Ys+2 correcao23....},
\eqref{Y3 correcao23....}, and 
\eqref{Ys+2^2 correcao23....} into
\eqref{nem sei mais o que colocar 2}, we obtain,
\begin{eqnarray}\nonumber
&&  2\displaystyle\sum _{k=1}^{n}v_{k}^2 - 
2\displaystyle\sum _{k=1}^{n}\Bigg[v_k^2 - 
\dfrac{\big(c^{k+3}_{k}\beta^{3}_{k}\big)^2}{2} 
+ \gamma^{3}_{k}  \Bigg] -1 =0.
\end{eqnarray}
Then,
\begin{equation}\label{condicao do gama3k}
\sum _{k=1}^{n}\big(\gamma^{3}_{k}\big)=
\sum_{k=1}^{n}
\Bigg[
\dfrac{\big(c^{k+3}_{k}\beta^{3}_{k}\big)^2}{2}  
\Bigg]  - \dfrac{1}{2}.
\end{equation}
Therefore, using \eqref{expressão final dos alfa}
and \eqref{condicao do gama3k} in \eqref{Expressao depois da mudança do Y}, 
we obtain \eqref{Yl depois de todo ajeitamento}.
\end{proof}

Next, using the  properties \eqref{relacoes de ortogonalidade para Y}, we will 
derive conditions on the constants
of integration involved in the coordinate functions of $\eta$.

\begin{proposition}
Under the same conditions as in Proposition 
\ref{Lema da conta gigante das constantes}, 
the coordinate functions of 
$\eta \in C^{n+3} \subset \mathbb{R}^{n+4}_2$ 
are given as follows,

\begin{eqnarray}\nonumber\label{eta correcao resumido final}
\eta^{l}&=& 
 \displaystyle\sum _{k=1}^{n}
\Bigg[v_{k}^{2}b_k^2  + 
\sqrt{2}v_{k}b_kc^{k+3}_{k}(\beta^{1}_{k} - 
\beta^{3}_{k}b_k) + 
\dfrac{(c^{k+3}_{k}(\beta^{1}_{k} - 
\beta^{3}_{k}b_k))^2}{2} + \dfrac{1}{2}\epsilon
\Bigg]
\Big(\delta_{l1} - \delta_{l2}\Big) \\ \nonumber
&+&\displaystyle\sum _{k=1}^{n} \Bigg\{ v_k^2b_k - 
\Bigg[ \big(c^{k+3}_{k}\big)^2\beta^{3}_{k}
\Bigg(\dfrac{\beta^{3}_{k}b_k}{2} - \beta^{1}_{k} \Bigg) + \gamma^{1}_{k}
\Bigg]\Bigg\}\Big(
\delta_{l3} + \delta_{l(n+4)}\Big) \\ 
&+&
\displaystyle\sum _{k,s=1}^{n}\Big\{
\sqrt{2}v_{k}b_k + c^{k+3}_{k}(\beta^{1}_{k} - 
\beta^{3}_{k}b_k) \Big\}c^{s+3}_{k}\delta_{l(s+3)}, \ \ \forall l=1,\ldots,(n+4),
\end{eqnarray}
where $\epsilon = 1$, if $l=1$ and $\epsilon = -1$, if $l=2$.    
\end{proposition}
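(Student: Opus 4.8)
The plan is to determine the integration constants $\psi^l$ appearing in the expression \eqref{Expressao depois da mudança no eta} for $\eta$ produced by the previous proposition, using the Laguerre frame relations \eqref{relacoes de ortogonalidade para Y} together with the orthogonal decomposition of $\mathbb{R}^{n+4}_2$ stated immediately after them. Beyond $\langle \eta, N\rangle = 0$, which was already imposed to force $\psi^3 = \psi^{n+4}$, the decomposition yields $\langle \eta, Y\rangle = 0$ (and hence $\langle \eta, Y_{,i}\rangle = 0$), while \eqref{relacoes de ortogonalidade para Y} supplies $\langle \eta, P\rangle = -1$ and $\langle \eta, \eta\rangle = 0$. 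These are the identities I would use to pin down the remaining $\psi^l$; note that $\langle \eta, Y\rangle = 0$ is consistent with the explicit definitions \eqref{definicao da Y} and \eqref{aplicacao normal de laguerre}, since a direct check gives $\langle y,\eta\rangle = 0$.

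The key structural observation is that, after the change of variables \eqref{mudança de coordenadas 1}, both $Y$ (in the form \eqref{Yl depois de todo ajeitamento}) and $\eta$ are polynomials of degree at most two in $v_1,\dots,v_n$: the middle coordinates $Y^{s+3}$ and $\eta^{s+3}$ are affine in $v$, while the coordinates $l=1,2,3,n+4$ are quadratic. Consequently each relation above is a polynomial identity in the $v_k$, and requiring it to hold for all $v$ lets me compare the coefficients of $1$, $v_j$ and $v_j v_k$ separately. The decisive algebraic tool is the orthogonality of the matrix $\mathcal{C}$, namely $\sum_s c^{s+3}_k c^{s+3}_j = \delta_{kj}$ from \eqref{lembrando novamente que ciquadrado e 1}; it collapses the double sums over $s$ so that the quadratic (and higher) coefficients cancel identically, leaving only the $v$-independent coefficients to carry information about the $\psi^l$. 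The vanishing of those higher-degree coefficients is a consistency check, guaranteed a priori by the integrability established in Lemma \ref{lema da aplicacao do FroFro}.

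Carrying this out, I would proceed in the following order. First, from $\langle \eta, Y\rangle = 0$ — most cleanly via $\langle \eta, Y_{,i}\rangle = 0$, since $Y^{s+3}_{,i}=\sqrt{2}\,c^{s+3}_i$ is constant in $v$ — I compare the coefficient of each $v_j$ and solve for the middle constants, obtaining $\psi^{s+3}=\sum_k \beta^1_k c^{k+3}_k c^{s+3}_k$. Next, $\langle \eta, P\rangle = -1$ forces $\eta^1+\eta^2$ to equal the constant $1$, the $v$-dependent parts of $\eta^1$ and $\eta^2$ being negatives of one another; this fixes $\psi^1+\psi^2$ and is the origin of the term $\tfrac12\epsilon$, with $\epsilon=\pm1$ for $l=1,2$, in \eqref{eta correcao resumido final}. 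The constant coefficient of $\langle \eta, Y\rangle=0$ together with $\langle \eta, \eta\rangle=0$ then separates $\psi^1-\psi^2$ and fixes $\psi^3=\psi^{n+4}=\sum_k\big[(c^{k+3}_k)^2\beta^1_k\beta^3_k-\gamma^1_k\big]$. Finally I substitute these values back into \eqref{Expressao depois da mudança no eta} and complete the square, recognizing $\tfrac12(c^{k+3}_k)^2(\beta^1_k-\beta^3_k b_k)^2$ in the first two coordinates, to recover exactly \eqref{eta correcao resumido final}.

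The main obstacle is not conceptual but the sheer bookkeeping: tracking the constants $c^{k+3}_k,\beta^1_k,\beta^3_k,\gamma^1_k$ and the unknowns $\psi^l$ through the quadratic inner products and matching coefficients monomial by monomial. The step most prone to error is organizing each identity strictly by its degree in $v$ and applying the orthogonality of $\mathcal{C}$ so that only the intended $v$-independent terms survive; checking that the quadratic and quartic coefficients indeed cancel is tedious, though it is forced in advance by the integrability of the system.
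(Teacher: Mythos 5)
Your proposal is correct and follows essentially the same route as the paper: determine the constants $\psi^l$ by imposing the frame relations $\langle \eta,P\rangle=-1$, $\langle \eta,\eta\rangle=0$ and $\langle Y,\eta\rangle=0$ on the quadratic-in-$v$ expressions, with the orthogonality $\sum_s c^{s+3}_ic^{s+3}_j=\delta_{ij}$ collapsing the sums, yielding $\psi^{s+3}=\sum_k\beta^1_kc^{k+3}_kc^{s+3}_k$, $\psi^1+\psi^2=1$, and $\psi^3=\sum_k[(c^{k+3}_k)^2\beta^1_k\beta^3_k-\gamma^1_k]$. The only (harmless) variation is that you extract the linear system $\sum_s c^{s+3}_i\psi^{s+3}=c^{i+3}_i\beta^1_i$ from $\langle\eta,Y_{,i}\rangle=0$, whereas the paper obtains the identical equation by differentiating $\langle\eta,\eta\rangle=0$ in $v_i$ and then cancelling $b_i\neq0$.
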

\begin{proof}
Since $\langle \eta, P \rangle = -1$ and \eqref{Expressao depois da mudança no eta} 
holds, it follows that $\eta^1 + \eta^2 = 1$ and $\psi^1 + \psi^2 = 1$. Now, since $\langle \eta, \eta \rangle = 0$, using \eqref{eta3=etan+4}, we have
\begin{eqnarray} \label{eta luz}
&&1 -2\eta^1 + 
\displaystyle\sum _{s=1}^{n}(\eta^{s+3})^2 =0,
\end{eqnarray}
where $\eta$ is a function of $(v_1,v_2,\ldots,v_n)$.

Differentiating \eqref{eta luz} with respect to 
$v_i$, $i = 1, \ldots, n$, we obtain
\begin{equation}\label{derivada do eta resumido correcao}
\displaystyle\sum _{s=1}^{n}\eta^{s+3}\eta^{s+3}_{,i} - \eta^1_{,i}=0.
\end{equation}
It follows from \eqref{Expressao depois da mudança no eta} and 
\eqref{derivada do eta resumido correcao} that
\begin{equation}\label{derivada expressao do eta corerecao}
\eta^{s+3}_{,i} = \sqrt{2}b_ic^{s+3}_{i},\, \, \, \, \, \text{e},\, \, \, \, \, \eta^1_{i}= 2v_ib_i^2 + \sqrt{2}b_ic_i^{i+3}(\beta^1_i - \beta^3_ib_i). 
\end{equation}
Substituting \eqref{Expressao depois da mudança no eta} 
(for $l = s + 3$) and \eqref{derivada expressao do eta corerecao} into \eqref{derivada do eta resumido correcao}, and using \eqref{lembrando novamente que ciquadrado e 1}, we obtain,
\begin{eqnarray*}
&\displaystyle\sum _{s=1}^{n}\sqrt{2}b_ic^{s+3}_{i}\psi^{s+3} +
2\displaystyle\sum _{k=1}^{n}v_{k}b_kb_i\delta_{ik} -
\sqrt{2}\displaystyle\sum _{k=1}^{n}\delta_{ik}c^{k+3}_{k}\beta^{3}_{k}b_kb_i - 2v_ib_i^2 - \sqrt{2}b_ic_i^{i+3}(\beta^1_i - \beta^3_ib_i)=0.
\end{eqnarray*}
Then,
since $b_i \neq 0$, $\forall i = 1, \ldots, n$, the last equality reduces to,
\begin{equation}
 \displaystyle\sum _{s=1}^{n}c^{s+3}_{i}\psi^{s+3} =
 c_i^{i+3}\beta^1_i,\, \, \, \, \, \forall i=1,\ldots,n. 
\end{equation}

\noindent We rewriting this last expression in matrix form,
\begin{equation}\label{eq. matriz para teta}
\mathcal{C}_{n\times n}\mathcal{\psi}_{n\times 1}
=\mathcal{D}_{n\times 1},
\end{equation}
where 
\begin{eqnarray*}
\mathcal{\psi}_{n\times 1} =  
\left(\begin{array}{c}
\psi^4\\
\psi^5\\
\psi^6\\
\vdots \\
\psi^{n+3}
\end{array}\right)_{n\times 1}, \ \ \ \ 
\mathcal{D}_{n\times 1} =  
\left(\begin{array}{c}
c^{4}_{1}\beta_{1}^1\\
c^{5}_{2}\beta_{2}^1\\
c^{6}_{3}\beta_{3}^1\\
\vdots \\
c^{n+3}_{n}\beta_{n}^1\\
\end{array}\right)_{n\times 1}.
\end{eqnarray*}
Since $\mathcal{C}$ is orthogonal, it follows from
\eqref{eq. matriz para teta} that
\begin{eqnarray}\nonumber\label{teta=Ct.D}
&&\mathcal{\psi}_{n\times 1}
=\mathcal{C}^{t}_{n\times n}
\mathcal{D}_{n\times 1},
\end{eqnarray}
i.e., 
\begin{equation}\label{expressão final dos teta}
\psi^{s+3} = 
 \displaystyle\sum _{k=1}^{n}
 \Big(c^{s+3}_{k}c^{k+3}_{k}\beta_{k}^1\Big), 
\ \ \ \forall s=1,\ldots,n.
\end{equation}
Considering \eqref{eta luz} and using
\eqref{Expressao depois da mudança no eta},
\eqref{lembrando novamente que ciquadrado e 1}, and 
\eqref{expressão final dos teta}, we obtain,
\begin{eqnarray} \nonumber\label{conta enorme de substituir na eq sem derivar eta}
&&\displaystyle\sum _{s=1}^{n}
\Bigg[\displaystyle\sum _{k=1}^{n}\Big(
\sqrt{2}v_kb_{k}c^{s+3}_{k} - 
c^{s+3}_{k}c^{k+3}_{k}\beta_{k}^3b_{k}  
\Big)+ \psi^{s+3}\Bigg]^2 -
\sum _{k=1}^{n}\Bigg[
2v_{k}^2b_{k}^2 +\\ \nonumber 
&&+ 2\sqrt{2}v_{k}b_{k}c^{k+3}_{k}
(\beta^{1}_{k}-\beta^{3}_{k}b_k)  +
+2(c^{k+3}_{k})^2\beta^{3}_{k}b_{k}\Bigg(\dfrac{\beta^{3}_{k}b_{k}}{2} - \beta^{1}_{k}\Bigg)
\Bigg] -2\psi^{1} +1 =0.
\end{eqnarray}
Thus,
\begin{eqnarray} \nonumber
&&\displaystyle\sum _{k=1}^{n}
\Big[(c^{k+3}_k)^2
(\beta^{1}_{k}-\beta^{3}_{k}b_k)^2\Big]
- \sum _{k=1}^{n}\Bigg[
2(c^{k+3}_{k})^2\beta^{3}_{k}b_{k}\Bigg(\dfrac{\beta^{3}_{k}b_{k}}{2} - \beta^{1}_{k}\Bigg)
\Bigg] -2\psi^{1} +1 =0. \nonumber
\end{eqnarray}
\noindent Therefore, we obtain
\begin{equation}\label{expressao do psi 1 correçao}
\psi^{1}=
\sum_{k=1}^{n}
\Bigg[
\dfrac{\big(c^{k+3}_{k}\beta^{1}_{k}\big)^2}{2}  
\Bigg]  + \dfrac{1}{2}.
\end{equation}
Using the fact that $\psi^1 + \psi^2 = 1$, 
\eqref{expressão final dos teta}, and 
\eqref{expressao do psi 1 correçao} in 
\eqref{Expressao depois da mudança no eta}, we can
rewrite $\eta^{l}$ as follows,
\begin{eqnarray}\nonumber\label{etal depois da correcao resumido}
\eta^{l}&=& 
 \displaystyle\sum _{k=1}^{n}
\Bigg[v_{k}^{2}b_k^2  + 
\sqrt{2}v_{k}b_kc^{k+3}_{k}(\beta^{1}_{k} - 
\beta^{3}_{k}b_k) + 
\dfrac{(c^{k+3}_{k}(\beta^{1}_{k} - 
\beta^{3}_{k}b_k))^2}{2} + \dfrac{1}{2}\epsilon
\Bigg]
\Big(\delta_{l1} - \delta_{l2}\Big) \\
&+&\Bigg\{\psi^3 + \displaystyle\sum _{k=1}^{n} \Bigg[ v_k^2b_k - 
\dfrac{b_k\big(c^{k+3}_{k}\beta^{3}_{k}\big)^2}{2} \Bigg]\Bigg\}\Big(
\delta_{l3} + \delta_{l(n+4)}\Big) \\  \nonumber
&+&
\displaystyle\sum _{k,s=1}^{n}\Big\{
\sqrt{2}v_{k}b_k + c^{k+3}_{k}(\beta^{1}_{k} - 
\beta^{3}_{k}b_k) \Big\}c^{s+3}_{k}\delta_{l(s+3)}, \ \ \forall l=1,\ldots,(n+4),
\end{eqnarray}
where $\epsilon = 1$, if $l=1$ and $\epsilon = -1$, if $l=2$.

Finally, considering 
$\langle Y, \eta \rangle = 0$ and using the relations $\eqref{F1i=-F2i}$, 
$\eqref{F8 = 1+ F3}$, $\eqref{eta3=etan+4}$, and the fact that 
$\psi^1 + \psi^2 = 1$, we have,
\begin{eqnarray} \label{eta e Y =0}
&&-Y^{1}+
\displaystyle\sum _{s=1}^{n}Y^{s+3}\eta^{s+3} 
- \eta^3 =0.
\end{eqnarray}
Substituting
\eqref{Yl depois de todo ajeitamento} and 
\eqref{etal depois da correcao resumido} into
\eqref{eta e Y =0} and using
\eqref{lembrando novamente que ciquadrado e 1}, we obtain,
\begin{eqnarray*}
&&\displaystyle\sum _{k,t=1}^{n}\Bigg[
2v_kv_tb_t\delta_{kt} + 
\sqrt{2}v_kc^{t+3}_{t}(\beta_{t}^1-\beta_{t}^3b_{t})\delta_{kt}\Bigg] +\\
&&\, \, \, \, \, \, \, \, \, \, + \displaystyle\sum _{k=1}^{n}
\Bigg[\big(c^{k+3}_{k}\big)^2\beta^{3}_{k}
\beta^{1}_{k} - 2v_{k}^2b_{k} - 
\sqrt{2}v_{k}c^{k+3}_{k}(\beta^{1}_{k} 
- \beta^{3}_{k}b_{k}) -  \gamma^{1}_{k}\Bigg]- \psi^3
=0.
\end{eqnarray*}
Thus, 
\begin{equation}\label{relacao psi 3 depois da correcao}
\psi^3 =  \displaystyle\sum _{k=1}^{n}
\Bigg[\big(c^{k+3}_{k}\big)^2\beta^{3}_{k}
\beta^{1}_{k} -  \gamma^{1}_{k}\Bigg].   
\end{equation}
Finally, substituting \eqref{relacao psi 3 depois da correcao} into \eqref{etal depois da correcao resumido}, we obtain the expression 
\eqref{eta correcao resumido final}.
\end{proof}
In what follows, having obtained the vectors 
$Y, \eta \in \mathbb{R}^{n+4}_2$, we can present an 
explicit expression for $x$ and its unit normal vector $\xi$. Note that this is 
equivalent to 
obtaining, up to a Laguerre transformation, 
the L-isotropic hypersurfaces in 
$\mathbb{R}^{n+1}$ parametrized by lines of curvature. 
More precisely, we have the following result.
\begin{theorem}
Let $x: M^n \rightarrow \mathbb{R}^{n+1}$ be a
hypersurface with $n$ distinct nonzero principal curvatures and 
$Y: M^n \rightarrow C^{n+3} \subset
\mathbb{R}^{n+4}_2$ the Laguerre position vector of $x$ 
with Laguerre metric $g = \langle dY, dY \rangle$, and 
$\lambda$ the eigenvalues of the tensor $\mathbb{L}$.
Suppose that $M^n$ is connected and admits a parametrization by lines
of curvature 
with respect to the metric $g$. If $x$ is an 
L-isotropic hypersurface, then $\lambda = 0$ and $x$ is 
an $L$-isoparametric hypersurface. Moreover, up to a Laguerre transformation, this 
hypersurface is equivalent to the one given by \eqref{definicao da HILF}.
\end{theorem}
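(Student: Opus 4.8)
The plan is to extract the two qualitative conclusions ($\lambda=0$ and the $L$-isoparametric property) directly from the integrability analysis, and then to reconstruct $x$ and its unit normal $\xi$ from the explicit coordinate expressions already obtained for $Y$ and $\eta$, matching the result against \eqref{definicao da HILF}.

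First I would settle the qualitative claims. Since $x$ is $L$-isotropic and admits a parametrization by lines of curvature with respect to $g$, Proposition \ref{lema 1 arrumando o sistema} shows that $Y$, $N$, $\eta$ satisfy the system \eqref{sistema final em coordenadas}. Because this system arises from a genuine immersion, its compatibility conditions $(Y_{,ii})_{,j}=(Y_{,ij})_{,i}$ hold automatically (mixed partials commute), so the equivalent Frobenius conditions of Lemma \ref{lema da aplicacao do FroFro} are met. These force $\lambda=0$, $\Gamma^{j}_{li}=0$ for $i\neq j\neq l$, and the Laguerre principal curvatures $b_i$ to be constant. Constant $b_i$ together with $\mathbb{C}=0$ is exactly the definition of an $L$-isoparametric hypersurface, so both statements follow at once.

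Next I would reconstruct the hypersurface. With $\lambda=0$ the integrated system yields the coordinate functions of $Y$ in \eqref{Yl depois de todo ajeitamento} and of $\eta$ in \eqref{eta correcao resumido final}, in the normalized frame $N=(0,0,1,\vec{0},1)$, $P=(1,-1,\vec{0},0)$, expressed through the variables $v_k$, the orthogonal matrix $\mathcal{C}=(c^{s+3}_k)$ (orthogonality being \eqref{lembrando novamente que ciquadrado e 1}), and the constants $b_k,\beta^1_k,\beta^3_k,\gamma^1_k$. Since this $Y$ satisfies $\langle Y,Y\rangle=0$, $\langle Y,P\rangle=0$ and $Y^2=-Y^1$, it is the Laguerre position vector \eqref{definicao da Y} of an honest hypersurface: one reads off $\rho=Y^{n+4}>0$ and $x\cdot\xi=Y^1/Y^{n+4}$, while $\xi=(Y^3,\dots,Y^{n+3})/Y^{n+4}$ is automatically a unit vector because $\langle Y,Y\rangle=0$ gives $\sum_{j=3}^{n+3}(Y^j)^2=(Y^{n+4})^2$. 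Reading \eqref{aplicacao normal de laguerre} the same way, $r=\eta^{n+4}$ and $x=(\eta^3,\dots,\eta^{n+3})-r\,\xi$, so substituting \eqref{Yl depois de todo ajeitamento} and \eqref{eta correcao resumido final} determines $x$ and $\xi$ explicitly.

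Finally I would identify this with the model. Passing to the rotated coordinates $w_s=\sum_k c^{s+3}_k v_k$, orthogonality of $\mathcal{C}$ gives $\sum_s w_s^2=\sum_k v_k^2$ and $Y^{s+3}=\sqrt2\,w_s$, so that $\xi$ becomes $\big(\tfrac{\sum_s w_s^2-\frac12}{\sum_s w_s^2+\frac12},\ \tfrac{\sqrt2\,w_s}{\sum_s w_s^2+\frac12}\big)$; after the rescaling $a_s u_s=\sqrt2\,w_s$ this is precisely \eqref{normal da xi linha exemplo} with all multiplicities equal to one, as required by the hypothesis of $n$ distinct curvatures. Matching $x$ with \eqref{definicao da HILF} then amounts to absorbing the residual constants $\beta^1_k,\beta^3_k,\gamma^1_k$; I would argue that these merely shift the representative within its Laguerre class, and conclude by Theorem \ref{teorema que caracteriza equivalente por t.laguerre com y}, exhibiting the explicit $T\in L\mathbb{G}$ with $Y^{*}=YT$, where $Y^{*}$ is the Laguerre position vector of \eqref{definicao da HILF}. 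I expect this last step to be the main obstacle: the careful bookkeeping of how $\mathcal{C}$ and the several integration constants recombine, and the verification that the freedom in those constants is exactly the freedom of a Laguerre transformation — which is what makes the choice of $\phi$ in \eqref{definicao da HILF} immaterial.
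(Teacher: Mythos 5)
Your proposal is correct and follows essentially the same route as the paper: Frobenius integrability (Lemma \ref{lema da aplicacao do FroFro}) forces $\lambda=0$ and constant $b_i$, hence the $L$-isoparametric property, and then $x$ and $\xi$ are read off from the integrated expressions for $Y$ and $\eta$ via $\rho=Y^{n+4}$, $\xi^q=Y^{q+2}/\rho$, $r=\eta^{n+4}$, $x^q=\eta^{q+2}-r\xi^q$. Your absorption of the orthogonal matrix $\mathcal{C}$ by rotating the $v_k$ is only a cosmetic variant of the paper's choice $\mathcal{C}=\mathcal{I}$, and the final identification of the residual integration constants (including $\phi$) with the freedom of a Laguerre transformation is exactly the paper's concluding argument.
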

\begin{proof}
From Lemma \ref{lema da aplicacao do FroFro}, we have
$\lambda = 0$ and $x$ is an 
$L$-isoparametric hypersurface. It remains to 
verify that $x$ is Laguerre equivalent 
to the hypersurface described in 
\eqref{definicao da HILF}. With this goal in mind, we will
explicitly determine $x$.

By the definition of $Y$ and from 
\eqref{Yl depois de todo ajeitamento}, we have that
\begin{equation}\label{rho da imersão x}
\rho= Y^{n+4} = \dfrac{1}{2} + \displaystyle\sum _{k=1}^{n}v_{k}^{2}.
\end{equation}
Note that $\xi^{q} = \dfrac{Y^{q+2}}{\rho},\ \ 
q = 1, \ldots, (n+1)$, where $\xi^{q}$ are the coordinates
of the unit normal vector field $\xi \in \mathbb{R}^{n+1}$.
Then, from 
\eqref{Yl depois de todo ajeitamento} and \eqref{rho da imersão x}, we obtain that,

\begin{equation}\label{xi com a primeira ro -1} 
\xi^{q}=\dfrac{\rho - 1}{\rho}\delta_{q1} 
+\dfrac{1}{\rho}\Bigg\{ \displaystyle\sum _{k,s=1}^{n}
\sqrt{2}v_{k}c^{s+3}_{k}\delta_{q(s+1)}\Bigg\},
\,  \,  \, \,   q=1,\ldots,(n+1).
\end{equation}

Now, comparing the definition of $\eta$ with the one given in 
\eqref{eta correcao resumido final}, we have
\begin{equation}\label{expressao do r}
r = \displaystyle\sum _{k=1}^{n} \Bigg\{ v_k^2b_k - 
\Bigg[ \big(c^{k+3}_{k}\big)^2\beta^{3}_{k}
\Bigg(\dfrac{\beta^{3}_{k}b_k}{2} - \beta^{1}_{k} \Bigg) + \gamma^{1}_{k}
\Bigg]\Bigg\}.
\end{equation}
Moreover, we obtain that the coordinate functions 
of the immersion $x$ are given 
as follows,
\begin{equation}\label{eq do x,eta,xi e r correcao}
    x^{q} = \eta^{q+2} - r\xi^{q},\ \ \ 
q=1,\ldots,(n+1).
\end{equation}
Substituting \eqref{eta correcao resumido final} and 
\eqref{expressao do r} in to \ref{eq do x,eta,xi e r correcao} for all $q=1,\ldots,(n+1)$, we get
\begin{equation}\label{eta(t+2)}
\eta^{q+2} = r\delta_{q1} +\displaystyle\sum _{k,s=1}^{n}\Big\{
\sqrt{2}v_{k}b_k + c^{k+3}_{k}(\beta^{1}_{k} - 
\beta^{3}_{k}b_k) \Big\}c^{s+3}_{k}\delta_{q(s+1)}.
\end{equation}
Then, it follows from 
\eqref{xi com a primeira ro -1}, \eqref{eq do x,eta,xi e r correcao}, 
and \eqref{eta(t+2)} that
the coordinate functions of the immersion $x$ are given by
\begin{eqnarray}\nonumber \label{funcao coordenada de x final}
\label{funcao coordenada de x}
&&x^{q}= \dfrac{r}{\rho}\delta_{q1}
+
\displaystyle\sum _{k,s=1}^{n}\Bigg\{
\sqrt{2}v_{k}\Big( b_k - \dfrac{r}{\rho}\Big) + c^{k+3}_{k}(\beta^{1}_{k} - 
\beta^{3}_{k}b_k) 
\Bigg\}c^{s+3}_{k}\delta_{q(s+1)}, \ \
\ q=1,\ldots,(n+1). \\
\end{eqnarray}

Finally, consider the following change of coordinates
\begin{eqnarray}\label{mudancanova}
\overline{v_k} = \dfrac{\sqrt{2}v_{k}}{\overline{b_k}},\, \, \, \, \text{onde}\, \, \, \, \overline{b_k}=\dfrac{1}{b_k}.
\end{eqnarray}
Applying \ref{mudancanova} to the expressions of $\rho$ and $r$ given respectively by 
\eqref{rho da imersão x} and \eqref{expressao do r}, we obtain

\begin{eqnarray*}\label{rho final depois da ultima mudança}
\rho= \dfrac{1}{2}\Bigg\{1 + \displaystyle\sum _{k=1}^{n}
(\overline{v_k}\overline{b_k})^2\Bigg\},\, \, \, \, \, r = 
\displaystyle\sum _{k=1}^{n}\Bigg\{\dfrac{\overline{v_k}^2\overline{b_k}}{2}\Bigg\}
+ \phi,
\end{eqnarray*}
where 
\begin{equation}\label{phi final correcao...}
   \phi=
-2\displaystyle\sum _{k=1}^{n} \Bigg\{ 
\Bigg[ \big(c^{k+3}_{k}\big)^2\beta^{3}_{k}
\Bigg(\dfrac{\beta^{3}_{k}}{2\overline{b_k}} - \beta^{1}_{k} \Bigg) + \gamma^{1}_{k}
\Bigg]\Bigg\}. 
\end{equation}
Therefore, using \eqref{mudancanova}, it follows from 
\eqref{xi com a primeira ro -1}
and \eqref{funcao coordenada de x final}
that the immersion $x$ and its normal $\xi$ are given, for all 
$q = 1, \ldots, (n+1)$, by
\begin{eqnarray}\label{finalmente x depois de mudanca correcao}\nonumber
&&x^{q}= \dfrac{r}{\rho}\delta_{q1}
+
\displaystyle\sum _{k,s=1}^{n}\Bigg\{
\overline{v_{k}}\Big(1 - \dfrac{r\overline{b_{k}}}{\rho}\Big) + c^{k+3}_{k}\Big(\beta^{1}_{k} -
 \dfrac{\beta^{3}_{k}}{\overline{b_{k}}}\Big) 
\Bigg\}c^{s+3}_{k}\delta_{q(s+1)},\ \ \ \ q=1,\ldots,(n+1), \\
\end{eqnarray}
and
\begin{eqnarray}
\xi^{q}=\dfrac{1}{2\rho}\Bigg\{\Bigg[
\displaystyle\sum _{k=1}^{n}(\overline{v_{k}}\overline{b_{k}})^2 - 
1 \Bigg]\delta_{q1} 
+\displaystyle\sum _{k,s=1}^{n}
\overline{v_{k}}\overline{b_{k}}c^{s+3}_{k}\delta_{q(s+1)}\Bigg\},
\,  \,  \, \,   q=1,\ldots,(n+1).    
\end{eqnarray}

Observe that the constants of integration $\beta^{1}_{k}, \beta^{3}_{k}$, and $\gamma^{1}_{k}$, for all $k = 1, \ldots, n$, can be chosen arbitrarily. The most significant condition is on the constants $c$'s which must be selected so that the matrix 
$\mathcal{C}$ given by \eqref{matriz das constantes C} is orthogonal. This choice determines the tangent vectors of $Y$, according to \eqref{correcao nova 25-04...}. One can then determine $\eta$, and subsequently $x$ and $\xi$. Consequently, choosing these constants yields hypersurfaces that are Laguerre equivalent. It therefore follows that the family of hypersurfaces obtained in \eqref{finalmente x depois de mudanca correcao} and the family defined in \eqref{definicao da HILF} are Laguerre equivalent.

\end{proof}
In what follows we will describe the choice of
constants that reduces the 
hypersurfaces 
\eqref{finalmente x depois de mudanca correcao} to \eqref{definicao da HILF}.
\begin{example}
\rm
First, consider the matrix $\mathcal{C} = \mathcal{I}$ as the identity matrix,
With this choice, the immersion $x$ obtained in 
\eqref{finalmente x depois de mudanca correcao}
reduces to,
\begin{eqnarray}\nonumber\label{x correcao exemplo}
&&x = \Bigg(0, 
\overline{v_{1}}, \overline{v_{2}},
\ldots, 
\overline{v_{n}}\Bigg) 
- \Bigg[\dfrac{\sum_{k=1}^{n}
\overline{b_{k}}
\overline{v_{k}}^2 + \phi} 
{1 +\sum_{k=1}^{n}(
\overline{v_{k}}\overline{b_{k}})^2}\Bigg]
\Bigg(-1, 
\overline{v_{1}}\overline{b_{1}}, 
\overline{v_{2}}\overline{b_{2}}, \ldots, 
\overline{v_{n}}\overline{b_{n}}\Bigg) \\
&&+ \Bigg(0, 
\Big(\beta_{1}^1 -
\dfrac{\beta^{3}_{1}}{\overline{b_{1}}}\Big),
\Big(\beta_{2}^1 -
\dfrac{\beta^{3}_{2}}{\overline{b_{2}}}\Big),
\ldots,
\Big(\beta_{n}^1 -
\dfrac{\beta^{3}_{n}}{\overline{b_{n}}}\Big)
\Bigg),
\end{eqnarray}
where
\begin{equation*}
    \phi=
-2\displaystyle\sum _{k=1}^{n} \Bigg\{ 
\Bigg[ \beta^{3}_{k}
\Bigg(\dfrac{\beta^{3}_{k}}{2\overline{b_k}} - \beta^{1}_{k} \Bigg) + \gamma^{1}_{k}
\Bigg]\Bigg\}.
\end{equation*}
The unit normal vector field is given by
\begin{eqnarray}
&&\xi = \Bigg[\dfrac{1} 
{1 +\sum_{k=1}^{n}(
\overline{v_{k}}\overline{b_{k}})^2}\Bigg]
\Bigg(\sum_{k=1}^{n}(
\overline{v_{k}}\overline{b_{k}})^2 - 1, 
2\overline{v_{1}}\overline{b_{1}}, 
2\overline{v_{2}}\overline{b_{2}}, \ldots, 
2\overline{v_{n}}\overline{b_{n}}\Bigg). 
\end{eqnarray}

Now, considering
\begin{equation*}
\beta_{k}^1 -
\dfrac{\beta^{3}_{k}}{\overline{b_{k}}} = 0,\, \, \, \, \, \forall\, k=1,\ldots,n
\end{equation*}
in \eqref{x correcao exemplo}, we have,
\begin{eqnarray}\label{x correcao exemplo considerando beta =0...}
&&x = \Bigg(0, 
\overline{v_{1}}, \overline{v_{2}},
\ldots, 
\overline{v_{n}}\Bigg) 
- \Bigg[\dfrac{\sum_{k=1}^{n}
\overline{b_{k}}
\overline{v_{k}}^2 + \phi} 
{1 +\sum_{k=1}^{n}(
\overline{v_{k}}\overline{b_{k}})^2}\Bigg]
\Bigg(-1, 
\overline{v_{1}}\overline{b_{1}}, 
\overline{v_{2}}\overline{b_{2}}, \ldots, 
\overline{v_{n}}\overline{b_{n}}\Bigg),
\end{eqnarray}
where
$\phi=
\displaystyle\sum _{k=1}^{n} \Bigg\{ 
\dfrac{\big(\beta^{3}_{k}\big)^2}{\overline{b_k}} - 2\gamma^{1}_{k}\Bigg\}.$
Setting $\phi = 0$, we obtain \eqref{definicao da HILF}.
\end{example}

\begin{remark}
\rm In \cite{artigo do exemplo}, Corro, A.V.; Ferreira, W.; and Tenenblat, K obtained the hypersurface described in \eqref{x correcao exemplo considerando beta =0...} as an example of a family of Dupin hypersurfaces. 
At first glance, the hypersurface \eqref{x correcao exemplo considerando beta =0...} 
may appear to be a generalization of 
\eqref{definicao da HILF}, where the two differ 
in the choice of $\phi$. However, from our calculations we see that choosing the constant $\phi$ is 
nothing more than choosing the position vector $Y$, which implies equivalence in the Laguerre sense. Thus, from the point of view of Laguerre geometry,
\eqref{x correcao exemplo considerando beta =0...}
and \eqref{definicao da HILF} are equivalent.
\end{remark}

Note that throughout this paper we have obtained 
results concerning L-isotropic hypersurfaces in 
$\mathbb{R}^{n+1}$ with $\lambda = 0$. 
For future work,
we would like to further explore the world of L-isotropic hypersurfaces when 
$\lambda > 0$.

\end{document}